\date{\today}
\theoremstyle{definition}
\newtheorem{theorem}{Theorem}[section]
\newtheorem{definition}[theorem]{Definition}
\newtheorem{lemma}[theorem]{Lemma}
\newtheorem{corollary}[theorem]{Corollary}
\newtheorem{example}[theorem]{Example}
\newtheorem{proposition}[theorem]{Proposition}
\newtheorem{remark}[theorem]{Remark}
\newtheorem{question}[theorem]{Question}
\newtheorem{conjecture}[theorem]{Conjecture}
\newcommand{\pdim}{\text{pd}}
\newcommand{\reg}{\text{reg}}
\DeclareMathOperator{\kk}{\Bbbk}
\newcommand{\betti}[2]{\beta_{#1,#2}}
\newcommand{\homol}[1]{\widetilde{H}_{#1}}
\newcommand{\dhomol}[1]{\dim\widetilde{H}_{#1}}
\newcommand{\Gc}{\widehat{G^c}}
\newcommand{\Ind}[1]{\text{Ind(#1)}}
\newcommand{\Susp}{\text{Susp}}
\newcommand{\Cone}{\text{Cone}}
\newcommand{\sdd}{\text{sd}_4}
\newcommand{\dotcup}{\ensuremath{\mathaccent\cdot\cup}} 
\begin{document}


\title{Jump Sequences of Edge Ideals}
\author[G. Whieldon]{Gwyn Whieldon}
\address[Gwyn Whieldon]{Cornell University\\
Ithaca, NY  14850}
\thanks{Wishes to thank her advisor M. Stillman and A. Hoefel, E. Nevo, and I. Peeva for many productive discussions.}
\email{whieldon@math.cornell.edu}

\begin{abstract}
Given an edge ideal of graph G, we show that if the first nonlinear strand in the resolution of $I_G$ is zero until homological stage $a_1$, then the next nonlinear strand in the resolution is zero until homological stage $2a_1$.   Additionally, we define a sequence, called a \emph{jump sequence}, characterizing the highest degrees of the free resolution of the edge ideal of G via the lower edge of the Betti diagrams of $I_G$.  These sequences strongly characterize topological properties of the underlying Stanley-Reisner complexes of edge ideals, and provide general conditions on construction of clique complexes on a fix set of vertices.  We also provide an algorithm for obtaining a large class of realizable jump sequences and classes of Gorenstein edge ideals achieving high regularity.
\end{abstract}
\maketitle



\section{Introduction}\label{intro}
Let $G$ be a simple graph (e.g. no loops or multiple edges) on $n$ vertices, and $e$ edges, denoted $G=(V,E)$ with $V$ the vertex set of $G$ and $E$ the edge set.  Let $\kk$ be a field of characteristic zero and $R=\kk[x_1,..,x_n]$ the polynomial ring over $\kk$ with a generator for each vertex of $G$.
\begin{definition}  Let $G$, $R$ as above.  Then the \emph{edge ideal of $G$}, denoted $I_G$ is the squarefree monomial ideal given by
$$I_G=(x_ix_j: \{i,j\}\in E).$$
\end{definition}
Ideals of this form have been of great interest recently, with excellent surveys here \cite{HaVanTu07} and here \cite{HaVanTu08}.  After their introduction by Villareal \cite{Vil90}, they have been studied extensively, with the goal of building a dictionary between graph properties of G and algebraic properties of $I_G$.\\
\\
Two invariants of $I_G$ of particular interest are the regularity and projective dimension, which respectively measure the width and length of the resolution.
\begin{definition}  We say that the \emph{regularity of $I_G$}, or $\reg(I_G)$ is
$$\reg(I_G)=\max\{j-i+1: \betti{i}{j}(I_G)\neq 0\}.$$
We say that the \emph{projective dimension of $I_G$}, or $\pdim(I_G)$, is
$$\pdim(I_G)=\max\{i:\betti{i}{j}(I_G)\neq 0\}.$$
\end{definition}
A property of $G$ which provides an immediate lower bound for the regularity of $I_G$ is the \emph{induced matching number of $G$.}  It has also been shown that the \emph{matching number} of $G$ provides an upper bound for the regularity of $I_G$.  Some recent work on on other upper bounds for regularity has can be found here \cite{Woo10}.

\begin{definition}  We say that $G$ has induced matching number $k$, or
$$\Ind{G}=k$$
if the largest subset of edges that can be chosen to be completely disconnected in the induced subgraph of $G$ restricted to those vertices is of size $k$.  We say that $G$ has matching number $k$, or
$$M(G)=k$$
if the largest mutually disjoint set of edges is of size k.
\end{definition}

\begin{example}  Considering the cycle graphs of lengths 4, 5, and 6, 




\begin{center}
\begin{tikzpicture}
[scale=.8,auto=left,vertices/.style={circle, fill=black, inner sep=1pt}]
\node [vertices] (a) at (0,0){};
\node [vertices] (b) at (1,0){};
\node [vertices] (d) at (0,1){};
\node [vertices] (c) at (1,1){};
\foreach \from/\to in {a/b,b/c,c/d,a/d}
	\draw [-] (\from) -- (\to);

\node [vertices] (e) at (0+.1+2,0){};
\node [vertices] (f) at (1-.1+2,0){};
\node [vertices] (i) at (-.2+2,1-.2){};
\node [vertices] (g) at (1.2+2,1-.2){};
\node [vertices] (h) at (.5+2,1.3){};
\foreach \from/\to in {e/f,f/g,g/h,h/i,e/i}
	\draw [-] (\from) -- (\to);

\node [vertices] (j) at (.5+2+2,0){};
\node [vertices] (o) at (0-.2+2+2,0+.3){};
\node [vertices] (k) at (1+.2+2+2,0+.3){};
\node [vertices] (n) at (-.2+2+2,1-.05){};
\node [vertices] (l) at (1.2+2+2,1-.05){};
\node [vertices] (m) at (.5+2+2,1.3-.05){};
\foreach \from/\to in {j/k,k/l,l/m,m/n,n/o,o/j}
	\draw [-] (\from) -- (\to);
\end{tikzpicture}
\end{center}
denoted $C_4$, $C_5$ and $C_6$ respectively, we see that  $\Ind{$C_4$}=1$, $\Ind{$C_5$}=1$, $\Ind{$C_6$}=2$ and $M(C_4)=2$, $M(C_5)=2$, and $M(C_6)=3$.\\



\begin{center}
\begin{tikzpicture}
[scale=.8,auto=left,vertices/.style={circle, fill=black, inner sep=1pt}]
\node [vertices] (a) at (0,0){};
\node [vertices] (b) at (1,0){};
\node [vertices] (d) at (0,1){};
\node [vertices] (c) at (1,1){};
\foreach \from/\to in {b/c,c/d,a/d}
	\draw [black!10,-] (\from) -- (\to);
\foreach \from/\to in{a/b}
	\draw [-] (\from) -- (\to);

\node [vertices] (e) at (0+.1+2,0){};
\node [vertices] (f) at (1-.1+2,0){};
\node [vertices] (i) at (-.2+2,1-.2){};
\node [vertices] (g) at (1.2+2,1-.2){};
\node [vertices] (h) at (.5+2,1.3){};
\foreach \from/\to in {f/g,g/h,h/i,e/i}
	\draw [black!10,-] (\from) -- (\to);
\foreach \from/\to in{e/f}
	\draw [-] (\from) -- (\to);

\node [vertices] (j) at (.5+2+2,0){};
\node [vertices] (o) at (0-.2+2+2,0+.3){};
\node [vertices] (k) at (1+.2+2+2,0+.3){};
\node [vertices] (n) at (-.2+2+2,1-.05){};
\node [vertices] (l) at (1.2+2+2,1-.05){};
\node [vertices] (m) at (.5+2+2,1.3-.05){};
\foreach \from/\to in {j/k,k/l,l/m,m/n,n/o,o/j}
	\draw [black!10,-] (\from) -- (\to);
\foreach \from/\to in{j/k,m/n}
	\draw [-] (\from) -- (\to);

\end{tikzpicture}
\;\;\;\;\;\;\;\;\;\;
\begin{tikzpicture}
[scale=.8,auto=left,vertices/.style={circle, fill=black, inner sep=1pt}]
\node [vertices] (a) at (0,0){};
\node [vertices] (b) at (1,0){};
\node [vertices] (d) at (0,1){};
\node [vertices] (c) at (1,1){};
\foreach \from/\to in {b/c,a/d}
	\draw [black!10,-] (\from) -- (\to);
\foreach \from/\to in{a/b,c/d}
	\draw [-] (\from) -- (\to);

\node [vertices] (e) at (0+.1+2,0){};
\node [vertices] (f) at (1-.1+2,0){};
\node [vertices] (i) at (-.2+2,1-.2){};
\node [vertices] (g) at (1.2+2,1-.2){};
\node [vertices] (h) at (.5+2,1.3){};
\foreach \from/\to in {f/g,g/h,e/i}
	\draw [black!10,-] (\from) -- (\to);
\foreach \from/\to in{e/f,h/i}
	\draw [-] (\from) -- (\to);

\node [vertices] (j) at (.5+2+2,0){};
\node [vertices] (o) at (0-.2+2+2,0+.3){};
\node [vertices] (k) at (1+.2+2+2,0+.3){};
\node [vertices] (n) at (-.2+2+2,1-.05){};
\node [vertices] (l) at (1.2+2+2,1-.05){};
\node [vertices] (m) at (.5+2+2,1.3-.05){};
\foreach \from/\to in {j/k,k/l,m/n,o/j}
	\draw [black!10,-] (\from) -- (\to);
\foreach \from/\to in{j/k,n/o,l/m}
	\draw [-] (\from) -- (\to);
\end{tikzpicture}
\end{center}
For these graphs, the regularity of $I_G$ is respectively $\reg(I_{C_4})=2$, $\reg(I_{C_5})=3$, and $\reg(I_{C_6})=3$.
\end{example}
In the case $G$ a tree, this number is precisely related to the regularity via the formula $\reg(I_G)=\Ind{G}+1$, noted in \cite{Zhe04}.  As seen in the example above for the regularity of the edge ideal of the 5-cycle, this fails for general graphs.
\begin{question}\label{regbounds}[Open] For graphs G with $\Ind{G}=1$\footnote{Equivalently, $G^c$ induced 4-cycle free, or $\betti{2}{4}(I_G)=0$.} is there a bound on the regularity of $I_G$?
\end{question}
Partial answers exist:  For graphs with $\Ind{G}=1$ with G claw-free the regularity of $I_G$ is at most 4.  In addition, for graphs of this form, we have that $I_G^2$ has a linear resolution \cite{Nev09}.  Other bounds for the regularity of $I_G$ have been provided in terms of the size of co-chordal covers \cite{Woo10}.  Examples of graphs which have $\Ind{G}=1$ and regularity of $I_G$ up to 5 are known, and we provide new general classes of graphs with $\Ind{G}=1$ and regularity $I_G=4$.  We also provide an example of graphs with $\Ind{G}=k$ and regularity as high as $\Ind{G}=4k+1$.\\
\\
We refine this question:
\begin{question}\label{mainquestion1}  Given any strictly increasing sequence of positive integers of the form $[k;a_1,...,a_{k-1}]$, does there exist a graph $G$ such that $\reg(I_G)=k+1$ and the first degree (i+k+1) syzygies of $I_G$ occurring at the $(a_i+1)$st homological stage of the resolution?
\end{question}
These $a_i$ are the sequence of numbers appearing below on the following Betti diagram, where the Betti diagram has been denoted in the style of Macaulay 2 via shifting the degree down by one in each adjacent row of the resolution.  In this paper, we classify some possible shapes of this lower edge of the resolution and demonstrate sequences $[k;a_1,...,a_{k-1}]$ which are prohibited from occurring in the resolution of an edge ideal $I_G$.
\begin{center}
\begin{tikzpicture} [scale=.8,auto=left,betti/.style={
execute at begin cell=\node\bgroup,
execute at end cell=\egroup;,
execute at empty cell=\node{$\cdot$};
},
vertices/.style={circle, fill=black, inner sep=0pt}]

\node [vertices] (r1) at (-7,.2){};
\node [vertices] (r2) at (-5.1+1.4,.2){};
\node [vertices] (r3) at (-5.1+1.4,-.7){};
\node [vertices] (r4) at (-1.6+1.4,-.7){};
\node [vertices] (r5) at (-1.6+1.4,-1.5){};
\node [vertices] (rk) at (1+1.4,-1.5){};
\node [vertices] (r6) at (1.7+1.4,-1.5){};
\node [vertices] (r7) at (1.7+1.4,-1.9){};
\node [vertices] (r8) at (2.2+1.4,-1.9){};
\node [vertices] (r9) at (2.2+1.4,-2.3){};
\node [vertices] (rj) at (3+1.4,-2.3){};
\node [vertices] (r10) at (5.6+1.4,-2.3){};
\node [vertices] (r11) at (5.6+1.4,-3.3){};
\node [vertices] (r12) at (5.9+1.4,-3.3){};

\foreach \from/\to in {r1/r2,r2/r3,r3/r4,r4/r5,r5/rk,rj/r10,r10/r11,r11/r12}
	\draw[-] (\from)--(\to);
\foreach \from/\to in {rk/r6,r6/r7,r7/r8,r8/r9,r9/rj}
	\draw[dotted] (\from)--(\to);

\node [vertices] (s1) at (-8.4+1.4,0){};
\node [vertices] (s2) at (-5.1+1.4,0){};
\draw[dashed] (s1)--(s2) node[midway, below] {$a_1$};

\node [vertices] (s3) at (-8.4+1.4,-.9){};
\node [vertices] (s4) at (-1.6+1.4,-.9){};
\draw[dashed] (s3)--(s4) node[midway, below] {$a_2$};

\node [vertices] (s5) at (-8.4+1.4,-1.7){};
\node [vertices] (s6) at (1.7+1.4,-1.7){};
\draw[dashed] (s5)--(s6);

\node [vertices] (s7) at (-8.4+1.4,-2.1){};
\node [vertices] (s8) at (2.2+1.4,-2.1){};
\draw[dashed] (s7)--(s8);

\node [vertices] (s3) at (-8.4+1.4,-2.5){};
\node [vertices] (s4) at (5.6+1.4,-2.5){};
\draw[dashed] (s3)--(s4) node[midway, below] {$a_{k-1}$};

\matrix [betti] {
-&0&1&2&3&$a_1+1$&$\cdots$&$a_2+1$&$\cdots$&$a_{k-1}-1$&$a_{k-1}$&$a_{k-1}+1$\\
total: &1&$\beta_{1}$&$\beta_{2}$&$\cdots$&$\beta_{a_1+1}$&$\cdots$&$\beta_{a_2+1}$&$\cdots$&$\beta_{a_{k-1}-1}$&$\beta_{a_{k-1}}$&$\cdots$\\
0: &1&&&&&&&&&\\
1:& &$\betti{1}{2}$&$\betti{2}{3}$&$\ast$&$\ast$&$\ast$&$\ast$&$\ast$\\
2:&\;&\;&\;&\;&$\betti{a_1+1}{a_1+3}$&$\ast$&$\ast$&$\ast$&$\ast$\\
3:&\;&\;&\;&\;&\;&\;&$\betti{a_2+1}{a_2+4}$&$\ast$&$\ast$&$\ast$&$\ast$\\
$\vdots$&\;&\;&\;&\;&\;&\;&\;&\;&$\ast$&$\ast$\\
k:& \;&\;&\;&\;&\;&\;&\;&\;&\;&\;&$\betti{a_{k-1}+1}{s}$\\
};
\end{tikzpicture}
\end{center}
This question is a strengthening of Question \ref{regbounds}, as the former question can be rephrased in terms of this sequence.
\begin{question}[Equivalent to Question \ref{regbounds}]  (Open) Is there a bound on the length of a sequence $[k;a_1,a_2,...,a_{k-1}]$ of the form in Question \ref{mainquestion1} for an ideal $I_G$ if $a_1\geq 2$?
\end{question}
We answer Question \ref{mainquestion1} negatively, although sharp conditions for a given sequence to have a corresponding edge ideal remain elusive.  One necessary condition for such a sequence to arise from a graph G:  For all edge ideals $I_G$, if the first nonlinear betti numbers occur at homological stage i of the resolution, the next nonlinear betti numbers must occur at stage 2i or later.  To rephrase this in the language of these sequences:
\begin{theorem}\label{maintheoremintro}  Given a sequence $[k;a_1,a_2,...,a_{k-1}]$ of the form in Question \ref{mainquestion1},
$$2a_1\leq a_2.$$
\end{theorem}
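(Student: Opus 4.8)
The plan is to translate the statement into topology via Hochster's formula and then exploit the combinatorial structure of independence complexes of graphs, which are flag (clique) complexes of the complementary graph. Recall that $\betti{i}{j}(I_G) = \betti{i+1}{j}(R/I_G)$, and by Hochster's formula, $\betti{i+1}{j}(R/I_G) = \sum_{|W|=j} \dim_\kk \homol{j-i-2}(\Delta_W)$, where $\Delta = \Ind{G}$ is the independence complex of $G$ (equivalently, the clique complex of $G^c$), and $\Delta_W$ is its restriction to the vertex set $W$. The hypothesis ``$a_1$ is the first homological stage of the first nonlinear strand'' means: the strand of slope-$2$ homological degree, i.e.\ $\betti{i}{i+2}(I_G) = 0$ for all $i < a_1$ and $\betti{a_1}{a_1+2}(I_G)\neq 0$. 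Dually, $\betti{i}{i+3}(I_G)=0$ for all $i<a_2$ and is nonzero at $i=a_2$. In topological terms: every induced subcomplex $\Delta_W$ on $|W|\le a_1+1$ vertices has vanishing reduced homology in degree $\ge 1$ (it is connected-and-higher-acyclic up to that range), and we want to conclude that $\Delta_W$ has vanishing $\homol{2}$ for all $W$ with $|W|\le a_2 + 1$, forcing $a_2\ge 2a_1$.

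First I would reformulate the problem as a statement purely about flag complexes: if $\Gamma$ is a flag simplicial complex such that every induced subcomplex on at most $N$ vertices has $\homol{1}=0$ (is ``homologically $1$-connected over $\kk$'' in the restricted sense), then every induced subcomplex on at most $2N-?$ vertices has $\homol{2}=0$. The clean target is: a minimal (vertex-inclusion-minimal) induced subcomplex $\Gamma_W$ with $\homol{2}(\Gamma_W)\neq 0$ and all proper induced subcomplexes $1$-acyclic must satisfy $|W|\ge 2a_1+3$ (matching $\betti{a_2}{a_2+3}\neq 0$ with $|W|=a_2+3$ and $a_2=2a_1$). The main tool will be a Mayer--Vietoris / long-exact-sequence argument: pick a vertex $v\in W$; then $\Gamma_W = \Gamma_{W\setminus v} \cup \Star_{\Gamma_W}(v)$, with intersection $\Link_{\Gamma_W}(v)$. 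Since $\Star(v)$ is a cone, the Mayer--Vietoris sequence gives $\homol{2}(\Gamma_{W\setminus v}) \to \homol{2}(\Gamma_W) \to \homol{1}(\Link_{\Gamma_W}(v)) \to \homol{1}(\Gamma_{W\setminus v})$. Because $\Gamma$ is flag, $\Link_{\Gamma_W}(v)$ is again a flag complex, namely the induced subcomplex of $\Gamma$ on $N_{G^c}(v)\cap W$. By minimality $\homol{2}(\Gamma_{W\setminus v})=0$, so a nonzero class in $\homol{2}(\Gamma_W)$ injects into $\homol{1}(\Link_{\Gamma_W}(v))$ — hence this link is an induced subcomplex with nonvanishing $\homol{1}$, so its vertex set $N(v)\cap W$ has size $\ge a_1+1$ by the hypothesis on $a_1$.

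Next I would push this quantitative bound: we need $|W|$ large. The idea is that a nonzero $2$-cycle in $\Gamma_W$ must be ``supported'' on all of $W$ by minimality, and we just showed every vertex $v\in W$ has $|N(v)\cap W|\ge a_1+1$, i.e.\ the ``non-neighbors'' of $v$ inside $W$ (which are exactly the vertices adjacent to $v$ in $G$) number at most $|W| - 1 - (a_1+1) = |W|-a_1-2$. To convert these local degree bounds into $|W|\ge 2a_1+3$ I would look more carefully at the structure: run the Mayer--Vietoris argument relative to a carefully chosen vertex and then iterate, or combine it with the fact that $\homol{1}(\Link(v))\neq 0$ means $\Link(v)$ itself contains, by the $a_1$-minimality applied inside the link, a further induced subcomplex of size exactly $a_1+1$ carrying $\homol{1}$; such a size-$(a_1+1)$ complex with $\homol{1}\neq 0$ is disconnected-or-worse and in particular (being flag and $1$-dimensional's worth of homology) contains two vertices $u,u'$ at distance $\ge 2$ in $G^c$ within $W\setminus v$. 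One then argues that the ``other side'' of the $2$-cycle contributes at least another $a_1+1$ vertices disjoint (or nearly disjoint) from these, using that $\homol{1}$ of a link being nonzero on $\le a_1+1$ vertices and $\homol{2}$ of the whole forces two such ``$1$-holes'' linked by a vertex, and the flag condition prevents them from overlapping too much. The cleanest packaging is probably: a minimal $\homol{2}$-support $W$ in a flag complex decomposes, via the deletion of one vertex $v$, into $v$ plus a minimal $\homol{1}$-support in $\Link(v)$ plus a minimal $\homol{1}$-support ``completing the cycle'' in $\Gamma_{W\setminus v}$ that meets the link's support in a controlled way; counting vertices gives $|W| \ge 1 + (a_1+1) + (a_1+1) - c$ for a small overlap constant $c$, and checking $c$ against the exact indexing $a_2 = |W|-3$ yields $a_2\ge 2a_1$.

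The main obstacle I anticipate is exactly this last counting step: controlling how much the two ``one-dimensional holes'' making up a two-dimensional hole can overlap in a flag complex. A naive Mayer--Vietoris bound only gives $|W| \ge a_1 + 2$ or so, which is far from $2a_1$; the factor of $2$ must come from genuinely using flagness (a non-flag complex like the boundary of the octahedron shows the link can be small) together with minimality of $W$ — perhaps via an Alexander-duality reformulation (the minimal $\homol{2}$-support corresponds to a minimal ``nonface-generated'' obstruction, and for flag complexes nonfaces are edges, which rigidifies the combinatorics), or via an induction on $a_1$ where one removes a whole ``linear'' piece of the resolution at once. I would also want to double-check the edge cases $a_1=1$ (where the claim $a_2\ge 2$ should be essentially automatic, since $\betti{1}{4}(I_G)=0$ always as the first syzygies are generated in degree $2$) and small $a_1$ by hand, since the indexing conventions in the Betti-diagram picture are where an off-by-one error would hide.
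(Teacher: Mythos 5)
Your setup is right — Hochster's formula, passing to a vertex-minimal induced subcomplex $W$ with $\homol{2}\neq 0$, and a Mayer--Vietoris argument whose left-hand term dies by minimality — and your first step (delete a vertex $v$, cover by the star of $v$ and the deletion, conclude the link of $v$ carries nonzero $\homol{1}$) is a genuine lemma; the paper proves exactly this as Lemma \ref{wheels}. But as you yourself observe, this only yields a local bound ($|n(v)\cap W|\geq a_1+3$ for every $v\in W$, hence $|W|\geq a_1+4$ or so), and the entire content of the theorem is the factor of $2$. Your proposed route to that factor — decomposing a minimal $\homol{2}$-support into a vertex plus two minimal $\homol{1}$-supports with ``controlled overlap'' $c$ — is never established: you do not exhibit the second $\homol{1}$-support, you do not show the two supports are close to disjoint, and controlling that overlap in a flag complex is precisely the hard part. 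So the proof is incomplete at its decisive step.

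The paper's trick is a different choice of Mayer--Vietoris cover, and it is worth internalizing because it inverts your picture. Instead of cutting at a vertex, cut along a \emph{minimal induced cycle} $c$ of $G^c$ inside $W$; by definition of $a_1$ its vertex set $W_c$ has size exactly $a_1+3$. Pick two nonadjacent vertices $v,w$ on $c$, split $c$ into the two arcs $V_1,V_2$ from $v$ to $w$, set $K=W\setminus W_c$, and cover $\Delta_G|_W$ by the induced subcomplexes on $K_1=K\cup V_1$ and $K_2=K\cup V_2$. Flagness plus the inducedness of $c$ (no chords between the interiors of the two arcs) is exactly what guarantees this really is a cover, i.e.\ $\Delta|_{K_1}\cup\Delta|_{K_2}=\Delta|_W$, with intersection the induced subcomplex on $K'=K\cup\{v,w\}$. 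Mayer--Vietoris and minimality of $W$ then force $\homol{1}(\Delta|_{K'})\neq 0$, and now the count goes the right way: $|K'|=|W|-|W_c|+2=(a_2+4)-(a_1+3)+2=a_2-a_1+3$, which by minimality of the $\homol{1}$-support must be at least $a_1+3$, giving $a_2\geq 2a_1$ directly. In other words, the factor of $2$ comes from applying the $a_1$-minimality \emph{twice}: once to make the equatorial cycle large ($a_1+3$ vertices), and once to make its complement (plus two antipodes) large as well — not from summing two $1$-holes inside the link of a single vertex, which is where your sketch stalls.
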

\begin{example}  This theorem prohibits Betti diagrams of any edge ideals $I_G$ from having the following shapes:
\begin{center}
\begin{tikzpicture} [betti/.style={
execute at begin cell=\node\bgroup,
execute at end cell=\egroup;,
execute at empty cell=\node{$\cdot$};
},vertices/.style={circle, fill=black, inner sep=0pt}] 
\matrix [betti] {
-&0&1&2&3&4&5&6&$\cdots$\\
total: &1&$\ast$&$\ast$&$\ast$&$\ast$&$\ast$&$\ast$&$\cdots$\\
0: &1&&&&&&&\\
1:&&$\ast$&$\ast$&$\circ$&$\circ$&$\circ$&$\circ$&$\cdots$\\
2:&&&&$\ast$&$\ast$&$\circ$&$\circ$&$\cdots$\\
3:&&&&&$\ast$&$\ast$&$\circ$&$\cdots$\\
};
\node [vertices] (1) at (-1,0){};
\node [vertices] (2) at (-1,-.6){};
\node [vertices] (3) at (-0.05,-.6){};
\node [vertices] (4) at (-0.05,-1.1){};
\node [vertices] (5) at (.45,-1.1){};
\node [vertices] (6) at (.45,-1.6){};
\node [vertices] (7) at (.7,-1.6){};

\foreach \from/\to in {1/2,2/3,3/4,4/5,5/6,6/7}
	\draw [-] (\from)--(\to);

\end{tikzpicture}\;\;\;\;\;
\begin{tikzpicture} [betti/.style={
execute at begin cell=\node\bgroup,
execute at end cell=\egroup;,
execute at empty cell=\node{$\cdot$};
},vertices/.style={circle, fill=black, inner sep=0pt}] 
\matrix [betti] {
-&0&1&2&3&4&5&6&$\cdots$\\
total: &1&$\ast$&$\ast$&$\ast$&$\ast$&$\ast$&$\ast$&$\cdots$\\
0: &1&&&&&&&\\
1:&&$\ast$&$\ast$&$\ast$&$\circ$&$\circ$&$\circ$&$\cdots$\\
2:&&&&&$\ast$&$\ast$&$\circ$&$\cdots$\\
3:&&&&&&&$\circ$&$\cdots$\\
};
\node [vertices] (1) at (-1,0){};
\node [vertices] (2) at (-1,-.6){};
\node [vertices] (3) at (.46,-.6){};
\node [vertices] (4) at (.46,-1.1){};
\node [vertices] (5) at (1.45,-1.1){};
\node [vertices] (6) at (1.45,-1.6){};
\node [vertices] (7) at (1.65,-1.6){};

\foreach \from/\to in {1/2,2/3,3/4,4/5,5/6,6/7}
	\draw [-] (\from)--(\to);
\end{tikzpicture}
\end{center}
\end{example}
For low $a_1$, this lower bound on $a_2$ is not sharp.  Theorem \ref{maintheoremintro} gives that if $a_1=2$, the left diagram above is impossible, i.e. $a_2\geq 4$.  Similarly, we have that if $a_1=2$, then $a_2\geq 6$.  However, slightly stronger lower bounds on $a_2$ bounds hold (and we conjecture a sharp lower bound on $a_2$ in terms of $a_1$):
\begin{proposition}\cite{Wh11}  Let $G$ be a simple graph, $I_G$ its edge ideal, and $[k;a_1,...,a_{k-1}]$ be the sequence in \ref{mainquestion1}.  Then the following hold:
\begin{enumerate}
\item  If $a_1=2$, then $a_2\geq 6$.
\item If $a_1=3$, then $a_2\geq 9$.
\end{enumerate}
\end{proposition}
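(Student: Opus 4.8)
The plan is to pass to the topology of independence complexes via Hochster's formula, reduce to a minimal example, and exploit the cone/link Mayer--Vietoris sequence together with the $2K_2$-freeness that $a_1\geq 2$ encodes. Write $\mathcal I(H)$ for the independence complex of a graph $H$ (the Stanley--Reisner complex of $I_H$). By Hochster's formula $\betti{i}{j}(I_G)=\sum_{|W|=j}\dhomol{j-i-1}(\mathcal I(G[W]);\kk)$, so $a_1$ is precisely the quantity such that the smallest $W$ with $\homol{1}(\mathcal I(G[W]))\neq 0$ has $a_1+3$ vertices, and $a_2$ is such that the smallest $W$ with $\homol{2}(\mathcal I(G[W]))\neq 0$ has $a_2+4$ vertices. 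So in case (1) I must show: if $G$ is $2K_2$-free (equivalently $\betti{2}{4}(I_G)=0$) and the minimal support of a nonzero $\homol{1}$-class has $5$ vertices, then the minimal support of a nonzero $\homol{2}$-class has $\geq 10$ vertices; in case (2) the numbers $5,10$ become $6,13$. Passing to an induced subgraph $G$ (of the given graph) of least order with $\homol{2}(\mathcal I(G))\neq 0$, we may assume $\homol{2}(\mathcal I(G-v))=0$ for every $v$, that $G$ has no isolated vertex (else $\mathcal I(G)$ is a cone and $\homol{2}=0$), and — since $a_1$ refers to the given graph — that every induced subgraph of $G$ with nonvanishing $\homol{1}$ has at least $a_1+3$ vertices.

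For each vertex $v$, from $\mathcal I(G)=\mathcal I(G-v)\cup\bigl(v\ast\mathcal I(G-N[v])\bigr)$, with contractible second piece and intersection $\mathcal I(G-N[v])$, the Mayer--Vietoris sequence and $\homol{2}(\mathcal I(G-v))=0$ give $\homol{1}(\mathcal I(G-N[v]))\neq 0$; hence $|V(G-N[v])|\geq a_1+3$ and $|V(G)|\geq a_1+4+\deg v$. The bridge from $2K_2$-freeness is: for $w\in N(v)$ and any edge $xy$ of $G-N[v]$, the vertex-disjoint edges $vw$ and $xy$ must be joined by an edge while $vx,vy$ are non-edges, so $w$ meets $\{x,y\}$; thus $N(w)\setminus N[v]$ is a vertex cover of $G-N[v]$ and $\tau(G-N[v])\leq\deg w-1$. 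Finally I will use a \emph{vertex-cover lemma}: if $H$ is $2K_2$-free with $\tau(H)\leq 2$, then $\homol{i}(\mathcal I(H))=0$ for all $i\geq 1$; and if moreover $H$ has no induced subgraph on $\leq 5$ vertices with nonvanishing $\homol{1}$ (i.e.\ $a_1(H)\geq 3$), then $\homol{1}(\mathcal I(H))\neq 0$ already forces $\tau(H)\geq 4$. Its proof is a Nerve Lemma argument: a vertex cover $T$ of $H$ writes $\mathcal I(H)$ as a union of at most $2^{|T|}$ simplices — one for each independent $S\subseteq T$, namely the simplex on $S$ together with the vertices outside $T$ adjacent to no vertex of $S$ — all of whose intersections are again simplices, so $\mathcal I(H)$ is homotopy equivalent to the nerve of this cover, a complex on those indices; a nontrivial $1$-cycle in the nerve is shown, using the $2K_2$-freeness (and, for the sharper statement, the hypothesis on $a_1$), to force a forbidden small induced subgraph of $H$.

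Combining these, $\homol{1}(\mathcal I(G-N[v]))\neq 0$ together with $2K_2$-freeness (and $a_1(G-N[v])\geq a_1$) gives $\tau(G-N[v])\geq 3$ when $a_1=2$ and $\geq 4$ when $a_1=3$, so every neighbour of $v$, hence $\Delta(G)$, is $\geq 4$, resp.\ $\geq 5$. In case (1), taking $v$ of maximal degree, $|V(G)|\geq a_1+4+\Delta(G)\geq 6+4=10$, i.e.\ $a_2\geq 6$. In case (2) this only gives $|V(G)|\geq 7+5=12$, so one must exclude $\Delta(G)=5$. If $\deg v=\Delta(G)=5$, then $4\leq\tau(G-N[v])\leq\deg w-1\leq 4$ for each $w\in N(v)$, forcing $\deg w=5$ and $N(w)=\{v\}\cup S_w$ for a minimum vertex cover $S_w$ of $G-N[v]$; in particular $N(v)$ is independent. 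The minimal example has no false twins: a pair $u\not\sim w$ with $N(u)=N(w)$ would make $\mathcal I(G-N[w])$ a cone with apex $u$, hence $\mathcal I(G)\simeq\mathcal I(G-w)$, against minimality. So the $S_w$ are five \emph{distinct} minimum vertex covers of $G-N[v]$, pairwise ``compatible'' in the sense forced by $2K_2$-freeness. If $|V(G-N[v])|\geq 7$ then $|V(G)|\geq 13$ already; if $|V(G-N[v])|=6$ then (no isolated vertex, $a_1\geq 3$) it is the triangular prism, whose six minimum vertex covers have pairwise-compatibility graph $K_{3,3}$, which has no triangle, so no five — indeed no three — are pairwise compatible. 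Either way $|V(G)|\geq 13$, i.e.\ $a_2\geq 9$.

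The conceptual core — Hochster's formula, the Mayer--Vietoris reduction, and the vertex-cover bridge — is routine; the real work, and the place where the improvement over $2a_1\leq a_2$ is won, is the Nerve Lemma case analysis behind the vertex-cover lemma (the $\tau=3$ case in particular) and the concluding combinatorial classification of the minimal $6$-vertex $\homol{1}$-witnesses and of compatible families of their minimum vertex covers; these finite but fiddly analyses are the main obstacle.
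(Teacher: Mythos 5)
First, note that the paper does not actually prove this proposition: it is cited to [Wh11], and the text only proves the weaker bound $a_1=2\Rightarrow a_2\geq 5$ (Theorem \ref{a2geq5}), by reducing to a minimal $8$-vertex witness of $\homol{2}\neq 0$ and then running an exhaustive degree-by-degree elimination on $5$-regular $C_4$-free complements. Your route shares the two structural ingredients the paper does use --- the minimal-counterexample reduction (Lemma \ref{reductionlemma}) and the Mayer--Vietoris link decomposition $\mathcal I(G)=\mathcal I(G-v)\cup\bigl(v\ast\mathcal I(G-N[v])\bigr)$, which is exactly the paper's Lemma \ref{wheels} written in complementary language --- but replaces the brute-force casework with the observation that, by $2K_2$-freeness, $N(w)\setminus N[v]$ is a vertex cover of $G-N[v]$ for every $w\in N(v)$. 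That vertex-cover mechanism is genuinely different from anything in the paper and is what lets you reach $10$ and $13$ vertices rather than $9$; I checked the bookkeeping ($|V(G)|\geq a_1+4+\deg v$, the degree lower bounds, the false-twin exclusion, and the forced identification $G-N[v]\cong C_6^c$ in the $12$-vertex subcase) and it is all correct.

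The one genuine gap is that the linchpin ``vertex-cover lemma'' is stated but not proved, and you yourself flag its nerve-lemma case analysis as the main obstacle. You do not need that machinery, and as written the proof is incomplete without it. The two instances you actually use follow in two lines: a flag complex with $\homol{1}\neq 0$ has an induced cycle of length $m\geq 4$ in its $1$-skeleton (a shortest nonbounding cycle is chordless, and triangles bound), so $H$ contains an induced $C_m^c$; $2K_2$-freeness excludes $m=4$ and the hypothesis $a_1\geq 3$ excludes $m=5$; since $\tau(C_m^c)=m-2$ and $\tau$ can only decrease on induced subgraphs, $\tau(H)\geq 3$ (resp.\ $\geq 4$). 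With that substitution the case $a_1=2$ is complete. For $a_1=3$ you should also make the ``compatibility'' relation explicit --- applying $2K_2$-freeness to the disjoint edges $wx$, $w'x'$ forces every vertex of $S_w\setminus S_{w'}$ to be adjacent to every vertex of $S_{w'}\setminus S_w$ --- and verify that the resulting graph on the six minimum vertex covers of the prism (equivalently on the six non-edges $ij'$, $i\neq j$) is the $6$-cycle together with its three long diagonals, i.e.\ $K_{3,3}$, hence triangle-free, so no three (let alone five) covers are pairwise compatible. I verified this computation and it is correct; modulo writing out these two finite checks, your argument establishes both bounds.
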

The proofs of these are technical, and we only include the proof that if $a_1=2$ then $a_2\geq 5$ here.  We also conjecture that the sharp lower bounds on the stage at which the earliest second nonlinear syzygy occurs is:
\begin{conjecture}  Let $G$ be a simple graph, $I_G$ its edge ideal, and $[k;a_1,...,a_{k-1}]$ be the sequence in \ref{mainquestion1}.  Then the following holds:
\begin{enumerate}
\item  If $a_1=2$, then $a_2\geq 8$.
\item If $a_1=3$, then $a_2\geq 12$.\footnote{This bound may not be sharp.}
\end{enumerate}
\end{conjecture}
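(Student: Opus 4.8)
By Hochster's formula the graded Betti numbers of $R/I_G$ satisfy $\betti{i}{j}=\sum_{|W|=j}\dim_{\kk}\widetilde{H}_{j-i-1}(\Delta_W)$, where $\Delta$ is the Stanley--Reisner complex of $R/I_G$ --- equivalently the clique (flag) complex of the complement graph $G^c$ --- and $\Delta_W$ denotes the induced subcomplex on a vertex set $W$. Reading the first nonzero entries of the ``row $2$'' and ``row $3$'' strands off the Betti diagram, $a_1+3$ is the least size of a $W$ with $\widetilde{H}_1(\Delta_W)\neq 0$ and $a_2+4$ is the least size of a $W$ with $\widetilde{H}_2(\Delta_W)\neq 0$. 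So the two assertions say: \emph{if no induced subcomplex of $\Delta$ on at most $4$ (resp.\ at most $5$) vertices has nonzero $\widetilde{H}_1$, then none on at most $11$ (resp.\ at most $15$) vertices has nonzero $\widetilde{H}_2$.} Put $d=a_1+3\in\{5,6\}$, fix a vertex set $W$ of minimum size with $\widetilde{H}_2(\Delta_W)\neq 0$, and set $\Gamma=\Delta_W$; the goal is $|W|\ge 4d-8$. Note that $G^c[W]$ has no induced $4$-cycle, since such a cycle is a $4$-vertex induced subcomplex with nonzero $\widetilde{H}_1$ and would force $a_1=1$.

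\smallskip
\noindent\emph{Step 1: vertex links carry one-dimensional homology.} For $v\in W$ write $\Gamma=\overline{\mathrm{st}}_\Gamma(v)\cup(\Gamma\setminus v)$; the first piece is a cone, the intersection is $\mathrm{lk}_\Gamma(v)$, and $\Gamma\setminus v=\Delta_{W\setminus v}$ has fewer than $|W|$ vertices, so $\widetilde{H}_2(\Gamma\setminus v)=0$ by minimality of $|W|$. Mayer--Vietoris then gives an injection $\widetilde{H}_2(\Gamma)\hookrightarrow\widetilde{H}_1(\mathrm{lk}_\Gamma(v))$, so $\widetilde{H}_1(\mathrm{lk}_\Gamma(v))\neq 0$. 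Since $\mathrm{lk}_\Gamma(v)$ is again flag (the clique complex of $G^c$ restricted to $N_{G^c}(v)\cap W$) and is an induced subcomplex of $\Delta$, the definition of $a_1$ forces it to have at least $d$ vertices. Hence $G^c[W]$ has minimum degree at least $d$, and in particular $|W|\ge d+1$.

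\smallskip
\noindent\emph{Steps 2 and 3: reduction to a surface, then Euler characteristic and genus.} One argues that the vertex-minimal complex $\Gamma$ may be replaced, without enlarging the vertex set, by a flag triangulation of a closed surface $S$: vertex-minimality forces $\Gamma$ to be $2$-dimensional (a face of dimension $\ge 3$ can be discarded using the long exact sequence of a pair together with flagness), and after representing the nonzero class by a $2$-cycle over $\kk$ and pruning $2$-faces attached along a contractible locus --- using flagness to exclude pinch points --- its support is a closed surface $S$ of some genus $g$, still flag and still free of induced $4$-cycles. On $S$ the link of every vertex is an induced cycle (a chord would produce a triangle in the link, hence a $3$-face), of length at least $d$ by Step 1, so $2E=\sum_v|\mathrm{lk}_\Gamma(v)|\ge dV$; combined with $V-E+F=2-2g$ and $3F=2E$ this gives $(6-d)V\ge 12-12g$. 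For $g=0$, $d=5$ this reads $V\ge 12=4d-8$; for $g=0$, $d=6$ it reads $0\ge 12$, so no such $\Gamma$ exists and the bound is vacuous. For $g\ge 1$ the inequality is empty, and one instead uses that $S$ contains $g$ pairwise disjoint chordless essential simple cycles; each is an induced subcomplex with nonzero $\widetilde{H}_1$, hence has at least $d$ vertices, and cutting $S$ along a maximal such system into planar pieces and reapplying the $g=0$ count to the pieces again gives $V\ge 4d-8$. The constant $4$ is sharp for $d=5$: the icosahedron is a flag triangulation of $S^2$ with every vertex link an induced $5$-cycle and no induced $4$-cycle, so the complement of the icosahedron graph has $a_1=2$ and carries nonzero $\widetilde{H}_2$ only on its full set of $12=4\cdot 5-8$ vertices, realizing $a_2=8$.

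\smallskip
\noindent\emph{The main obstacle.} The hard part is Step 2. A vertex-minimal flag complex with nonzero $\widetilde{H}_2$ need not be a pseudomanifold, and over a field the support of a nonzero $2$-cycle may be a branched or ``thickened'' $2$-complex; trimming it down to a closed surface while preserving both minimality and the ``no induced $4$-cycle'' hypothesis is precisely the technical difficulty, and is presumably why only the weaker estimates $a_2\ge 2a_1$ and $a_2\ge a_1+3$ (the case $a_1=2$) admit short arguments. Failing that reduction, one can instead attempt a direct structural analysis: assume $|W|\le 11$ (resp.\ $\le 15$) and use minimum degree $\ge d$, flagness of all links, nonvanishing $\widetilde{H}_1$ of all links, and the absence of induced $4$-cycles to enumerate the possibilities for $G^c[W]$ and check each has $\widetilde{H}_2=0$; this is feasible but heavy. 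The secondary obstacle is the positive-genus case --- a uniform lower bound on the number of vertices of an essential cycle, together with an efficient cutting system that does not double-count vertices --- and the absence of a known extremal configuration for $d=6$ is exactly what is reflected in the caveat that the bound for $a_1=3$ may not be sharp.
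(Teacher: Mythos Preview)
The statement you are trying to prove is labeled a \emph{Conjecture} in the paper; there is no proof of it in the paper to compare against. The paper only establishes the weaker bounds $a_2\ge 2a_1$ in general (Theorem~\ref{maintheorem}) and $a_2\ge 5$ when $a_1=2$ (Theorem~\ref{a2geq5}), and cites~\cite{Wh11} for the slightly stronger $a_2\ge 6$ (resp.\ $a_2\ge 9$) when $a_1=2$ (resp.\ $a_1=3$). So your proposal is not an alternative to the paper's argument --- it is an attempt to settle an open problem the paper poses.

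Your Step~1 is correct and is exactly the content of the paper's Lemma~\ref{wheels}: Mayer--Vietoris on $\overline{\mathrm{st}}(v)\cup(\Gamma\setminus v)$ forces $\widetilde H_1(\mathrm{lk}_\Gamma(v))\neq 0$, hence each link contains an induced cycle of length $\ge a_1+3$. The Euler-characteristic count you run in Step~3 is also correct \emph{conditional on} $\Gamma$ being a closed flag surface: for $g=0$ and $d=5$ it gives $V\ge 12$, realized by the icosahedron (the paper's Example~\ref{balloon}), and for $d=6$ it rules out $g=0$ entirely.

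The genuine gaps are exactly the ones you flag, and they are fatal as stated. First, Step~2 is not an argument: a vertex-minimal flag complex with $\widetilde H_2\neq 0$ need not be $2$-dimensional (you cannot simply ``discard'' a $3$-face from a flag complex --- flagness forces it back in --- and removing a vertex is precisely what minimality forbids), and the support of a $2$-cycle over $\kk$ need not be a pseudomanifold, let alone a closed surface. The paper's own Example~4.7 is a warning that even nice-looking minimal $\widetilde H_2$-carriers resist being sliced into hemispheres. Second, even granting the surface reduction, your positive-genus step is not a proof: cutting a genus-$g$ surface along $g$ disjoint essential cycles produces a sphere with $2g$ boundary components, not a closed sphere, each cut cycle is duplicated, and ``reapplying the $g=0$ count to the pieces'' does not yield $V\ge 4d-8$ without a careful accounting that you have not supplied. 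In short, the Euler-characteristic heuristic explains \emph{why} one should believe the conjecture (and why $12$ is the right number for $a_1=2$), but turning it into a proof requires precisely the surface-reduction lemma that neither you nor the paper has.
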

These sequences characterize at what stage the width of the resolution of $I_G$ increases, which can be thought of as a measure of the complexity of the resolution through that homological stage.  In this paper we give some restrictions on permissible sequences, and provide several classes of edge ideals partially spanning the set of possible sequences.  Characterizing the types of degree increases in the resolution of ideals of this form provides a tool to help characterize both the algebraic properties of edge ideals and the topological properties of certain flag simplicial complexes.\\
\\
These algebraic questions are equivalent to a question about the topology of flag simplicial complexes:
\begin{question}  Given a flag simplicial complex $\Delta$, and any ordering of the vertices $\{v_1,v_2,...,v_n\}$, consider the chains of nested induced subcomplexes
$$\emptyset=\Delta|_{V_0}\subset \Delta|_{V_1}\subset\Delta|_{V_2}\subset\cdots \subset\Delta|_{V_k}\subset\cdots \subset\Delta|_{V_n}.$$
What can be said about the sequence $a_i:=\min\{k: \dim\homol{i}(V_k)\neq 0\}-i$ for $i\geq 1$?  If the 1-skeleton of $\Delta$ is assumed to be $C_4$ free, what types of sequences $\{a_i\}_{i=1}^{\dim\Delta-1}$ are possible?
\end{question}
We answer this question in Section \ref{polytopes} for edge ideals whose Stanley-Reisner complex can be represented as a regular convex polytope subject to some conditions on their induced subcomplexes.



\section{Algebraic Background}
We introduce some terminology to standardize our notation.
\begin{definition}  The \emph{clique complex of a graph G}, denote $\widehat{G}$, is the simplicial complex on the vertex set of $G$ whose facets are the maximal cliques, or maximal complete subgraphs, of $G$.  The \emph{clique closure of a simplicial complex $\Delta$} denoted $\widehat{\Delta}$, is the complex obtained by closing the complex under the operation of adding a face $\sigma$ to $\Delta$ whenever $\partial\sigma\in\Delta$.
\end{definition}
\begin{remark}  Complexes such that $\Delta=\widehat{\Delta}$ are referred to as either \emph{clique} or \emph{flag} complexes.
\end{remark}
\begin{definition}  Let $I\subset R=\Bbbk[x_1,...,x_n]$ be a square-free monomial ideal, also referred to as a \emph{Stanley-Reisner ideal.}  Then $\Delta_I$, the Stanley-Reisner complex of $I$ is a simplicial complex on vertex set $\{x_1,...,x_n\}$ given by
$$\{\sigma=\{x_{i_1},...,x_{i_r}\}\in\Delta: {\bf m}\nmid x_{i_1}\cdots x_{i_r}\forall {\bf m}\in I\}.$$
\end{definition}
As the Stanley Reisner complex of the ideal $I_G$ is exactly the clique complex of the complement graph $G^c$, properties of the complement graph feature heavily in determinations of the resolutions of $I_G$.  We denote the Stanley Reisner complex of $I_G$ as $\Delta_G$ or $\Gc$ throughout.
\begin{definition}  Let $G$ be a graph with $I_G$ its edge ideal in ring $R=[x_1,...,x_n]$.  Then ${\mathcal F}$, the minimal graded free resolution of $I_G$ is a chain complex of the form
$${\mathcal F}:\;\;\;\cdots\longrightarrow\bigoplus_{j\geq 0} R(-j)^{\betti{i}{j}}\xrightarrow{\varphi_i}\cdots\longrightarrow\bigoplus_{j\geq 0} R(-j)^{\betti{1}{j}}\xrightarrow{\varphi_1}\bigoplus_{j\geq 0} R(-j)^{\betti{0}{j}}\xrightarrow{\varphi_0}R\rightarrow I_G,$$
with $\varphi_i:F_i\rightarrow F_{i-1}$ degree zero maps with entries in ${\mathfrak m}$.
\end{definition}
The ranks of the modules in the resolution of $I_G$ are an invariant of $G$.  These are referred to as the \emph{Betti numbers of $I_G$}, with the Betti numbers of the graded resolution denoted $\betti{i}{j}$ and the Betti numbers of the multigraded resolution are denoted $\betti{i}{m}$, $m$ a square-free monomial.  This relies on a slight abuse of notation -- as written here, we are indexing by the multigraded monomial supported on the multidegree, rather than by the multidegrees themselves.



\section{Jump Sequences and Betti Diagrams}
We formalize our definition of these jump sequences and provide several examples.  We will use the following well-known results in the calculation of our Betti numbers and in the characterization of our graphs and simplicial complexes:
\begin{proposition}  Let $G$ be a graph, $I_G$ its edge ideal, and $\Delta_G$ its Stanley-Reisner complex.  Then the $\Delta_G$ is clique closed, $\Delta_G=\widehat{\Delta_G}$, and its 1-skeleton is the complement graph of G, $\left(\Delta_G\right)_1=G^c$.
\end{proposition}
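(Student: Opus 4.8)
The plan is to reduce all three assertions to the standard description of the Stanley--Reisner correspondence for a quadratic square-free ideal. Recall from the definition of $\Delta_I$ that for a square-free monomial ideal $I$, a set $\sigma=\{x_{i_1},\dots,x_{i_r}\}$ is a face of $\Delta_I$ exactly when no element of $I$ divides $x_{i_1}\cdots x_{i_r}$; since it suffices to test the minimal generators, and the minimal generators of $I_G$ are precisely the monomials $x_ix_j$ with $\{i,j\}\in E$, this says that $\sigma\in\Delta_G$ if and only if $\sigma$ contains no edge of $G$, i.e. $\sigma$ is an independent set of $G$, i.e. $\sigma$ spans a clique of $G^c$. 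So the first step is to record this identification of face sets: $\Delta_G=\widehat{G^c}$. Each remaining claim then falls out of it.

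Next I would read off the $1$-skeleton. Every singleton $\{x_i\}$ vacuously contains no edge of $G$, so all $n$ vertices of $G$ are faces of $\Delta_G$; and a two-element set $\{x_i,x_j\}$ is a face exactly when $\{i,j\}\notin E$, that is, when $\{i,j\}$ is an edge of $G^c$. Hence $(\Delta_G)_1$ has vertex set $V$ and edge set $E(G^c)$, so $(\Delta_G)_1=G^c$.

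Finally I would verify that $\Delta_G$ is already closed under the clique-closure operation, so that $\Delta_G=\widehat{\Delta_G}$. Suppose $\sigma$ is a set of vertices with $\partial\sigma\subseteq\Delta_G$, i.e. all proper subsets of $\sigma$ are faces. If $|\sigma|\le 1$ then $\sigma\in\Delta_G$ already. If $|\sigma|\ge 2$, then in particular every two-element subset $\{x_i,x_j\}\subseteq\sigma$ lies in $\Delta_G$, whence $\{i,j\}\notin E$; thus $\sigma$ contains no edge of $G$, so $\sigma\in\Delta_G$. Therefore the closure operation never adds anything new, and $\Delta_G=\widehat{\Delta_G}$. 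There is no genuine obstacle in this proposition; the only points that warrant a moment's care are the degenerate cases $|\sigma|\le 1$ and the observation that clique closure cannot introduce new vertices (all singletons are already present), and the whole statement is in essence a reformulation of the defining property of the Stanley--Reisner complex of a quadratic square-free monomial ideal.
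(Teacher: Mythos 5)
Your proof is correct and follows essentially the same route as the paper's: both arguments rest on the observation that, since $I_G$ is generated in degree $2$, the minimal nonfaces of $\Delta_G$ are exactly the edges of $G$ (equivalently, faces are the independent sets of $G$), from which the clique-closure and the identification $(\Delta_G)_1=G^c$ both follow. Your write-up is somewhat more explicit about the degenerate cases, but the underlying idea is the same.
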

\begin{proof}  As $I_G$ is generated in degree 2, all minimal nonfaces of the Stanley-Reisner complex are edges.  If the faces in the boundary of a simplex $\partial\sigma$ of dimension greater than 2 are all in $\Delta_G$, then $\sigma\in\Delta_G$.  So $\Delta_G$ is clique-closed.  Every minimal nonface of $\Delta_G$ is an edge in $G$, so the 1-skeleton of $\Delta_G$ is precisely the edges not in $G$.  So $\left(\Delta_G\right)_1=G^c$.
\end{proof}
Using this proposition, we reformulate statements about the induced matching number of $G$, $\Ind{G}$ in terms of properties of $\Delta_G$.
\begin{proposition}\label{indnumber} Let $G$ be a simple graph with edge ideal $I_G$ and Stanley-Reisner complex $\Delta_G$.  The following are equivalent:
\begin{enumerate}
\item $\Ind{G}=k$
\item $\Delta_G$ has the boundary of the \emph{$k$-dimensional cross polytope} $\beta_{k+1}$ as an induced subcomplex, i.e. if $S_0$ is a set consisting of two points,
$$\partial \beta_{k+1}=\overbrace{S_0\ast S_0\ast\cdots\ast S_0}^{k+1}\subseteq \Delta_G,$$
and no $\beta_r$ for $r>k+1$ is an induced subcomplex of $\Delta_G$.
\end{enumerate}
\end{proposition}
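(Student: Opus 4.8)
The plan is to prove Proposition~\ref{indnumber} by translating the combinatorics of induced matchings in $G$ into the combinatorics of induced subcomplexes of the clique complex $\Delta_G = \widehat{G^c}$, using the already-established fact that $(\Delta_G)_1 = G^c$. First I would unpack the definition of an induced matching of size $k$ in $G$: it is a set of $k$ pairwise disjoint edges $\{a_1,b_1\},\dots,\{a_k,b_k\}$ of $G$ such that, in the induced subgraph of $G$ on the $2k$ vertices $\{a_1,b_1,\dots,a_k,b_k\}$, the \emph{only} edges present are the $k$ matching edges. I would then observe that this last condition says exactly that for $i \neq j$, all four pairs $\{a_i,a_j\}, \{a_i,b_j\}, \{b_i,a_j\}, \{b_i,b_j\}$ are non-edges of $G$, hence edges of $G^c$; and that $\{a_i,b_i\}$ is an edge of $G$, hence a non-edge of $G^c$.

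The core of the argument is the following dictionary, which I would state and prove as the crux: a subset $W = \{a_1,b_1,\dots,a_k,b_k\}$ of $2k$ vertices carries an induced matching of $G$ with matching edges $\{a_i,b_i\}$ \emph{if and only if} the induced subcomplex $\Delta_G|_W$ equals the join $S_0 * S_0 * \cdots * S_0$ ($k$ factors), where the $i$-th copy of $S_0$ is the two-point set $\{a_i,b_i\}$ viewed as a $0$-dimensional complex (two vertices, no edge). The forward direction: since $\Delta_G$ is the clique complex of $G^c$, $\Delta_G|_W$ is the clique complex of $G^c|_W$; from the edge analysis above, $G^c|_W$ is the complete multipartite graph on the $k$ pairs $\{a_i,b_i\}$ (complete $k$-partite with all parts of size $2$), whose clique complex is precisely the iterated join of the $k$ two-point sets, i.e. the boundary complex $\partial\beta_{k}$ of the $k$-dimensional cross-polytope with $k+1 \mapsto k$ reindexed as in the statement (here I'd match the paper's convention $\partial\beta_{k+1}$ for the $2(k+1)$-vertex cross-polytope boundary, so an induced matching of size $k$ corresponds to $\partial\beta_{k+1}$ — wait, I must be careful: the statement writes $\partial\beta_{k+1}$ as a $(k+1)$-fold join of $S_0$ for $\Ind{G}=k$, so actually $\Ind G = k$ should give a $k$-fold join; I would reconcile this by checking the cross-polytope indexing convention and stating it consistently). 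The reverse direction is immediate by reading the same equivalences backwards: if $\Delta_G|_W$ is such an iterated join, then $G^c|_W$ must be complete multipartite with parts $\{a_i,b_i\}$, so in $G|_W$ the only edges are the $\{a_i,b_i\}$, giving an induced matching.

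With the dictionary in hand, the proposition follows by taking maxima on both sides. The statement ``$\Ind G = k$'' means there is an induced matching of size $k$ and none of size $k+1$; by the dictionary this is equivalent to ``$\Delta_G$ contains the appropriate iterated join of $S_0$'s on some induced vertex subset, and contains no larger one,'' which is exactly condition~(2). I would also remark that one should verify the ``no larger induced subcomplex $\beta_r$ for $r$ larger'' clause is genuinely equivalent to ``no induced matching of size $k+1$'': an induced copy of $\beta_r$ inside $\Delta_G$ on a vertex set $W'$ forces $G^c|_{W'}$ to be complete multipartite with $r$ parts of size $2$ (since the cross-polytope boundary is a flag complex, its clique-complex realization pins down the $1$-skeleton, and $\Delta_G|_{W'}$ being determined by its $1$-skeleton $G^c|_{W'}$ closes the loop), hence an induced matching of size $r$ in $G$.

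The main obstacle I anticipate is bookkeeping rather than conceptual: getting the cross-polytope indexing convention exactly right (the paper writes $\partial\beta_{k+1}$ for a $(k+1)$-fold join, so I must make sure ``induced matching of size $k$'' lines up with the correct subscript), and making airtight the step that an \emph{induced} subcomplex of a flag complex that happens to be isomorphic to $\partial\beta_{k+1}$ must sit inside as the clique complex of a complete multipartite graph — this uses essentially that $\partial\beta_{k+1}$ is itself flag, so being an induced subcomplex (not just a subcomplex) of $\Delta_G$ means its $1$-skeleton already generates it, and that $1$-skeleton is forced to be complete multipartite. Once that point is nailed down, everything else is a direct translation through $\Delta_G = \widehat{G^c}$ and $(\Delta_G)_1 = G^c$, both supplied by the preceding proposition.
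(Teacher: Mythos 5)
Your proposal is correct and follows essentially the same route as the paper: reduce to the observation that the induced subcomplex on the $2k$ matched vertices is the clique complex of a complete multipartite graph with parts of size two, identify that with the cross-polytope boundary, and take maxima on both sides. Your extra care on the converse (that an \emph{induced} copy of the cross-polytope boundary, being flag, pins down the $1$-skeleton and hence an induced matching) tightens a step the paper dispatches with ``and vice versa,'' and the indexing discrepancy you flagged is real — the paper's own proof and subsequent example use $\partial\beta_r$ for $r$ disjoint edges, at odds with the $\beta_{k+1}$ in the statement.
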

\begin{proof}  We prove a slightly stronger statement.  If E is any set of edges of size $r$ in $G$ with the induced graph $G$ on those edges completely disconnected, we have that $\Delta_G$ contains $\beta_{r+1}$, and vice versa.  This is equivalent to proving that the Stanley-Reisner complex of a graph consisting of $r$ disjoint edges is the boundary of the $r$-dimensional cross polytope, as all properties of these complexes rely only on combinatorial data of induced subgraphs and subcomplexes.\\
\\
Without loss of generality, let $G$ be the graph consisting of $r$ disjoint edges, with edge set $E=\{\{x_1,y_1\},\{x_2,y_2\},...,\{x_r,y_r\}\}$ .  By definition, each edge in $G$ is a minimal nonface of $\Delta_G$, and all faces containing at most one vertex in each edge-pair must be in $\Delta_G$.  So the facets ${\mathcal F}$ of $\Delta_G$ must be of the form
$${\mathcal F}=\{\sigma=\{w_1,w_2,...,w_r\}:w_i=x_i\text{ or }w_i=y_i\}.$$
This is precisely the boundary of the $r$-dimensional cross polytope.
\end{proof}
\begin{example}  For the graph $G$ consisting of 3 disjoint edges, we see that $\Delta_G=\partial\beta_3\cong S^2$.
\begin{center}
\begin{tikzpicture}
[scale=0.8, vertices/.style={circle, fill=black, inner sep=1pt}]

\node[vertices, label=below:{$x_1$}] (x1) at (0,0){};
\node[vertices, label=above:{$y_1$}] (y1) at (0.2,1){};

\node[vertices, label=below:{$x_2$}] (x2) at (0.9,0){};
\node[vertices, label=above:{$y_2$}] (y2) at (1.1,1){};

\node[vertices, label=below:{$x_3$}] (x3) at (1.8,0){};
\node[vertices, label=above:{$y_3$}] (y3) at (2,1){};

\foreach \from/\to in {x1/y1,x2/y2,x3/y3}
	\draw[-] (\from)--(\to);

\end{tikzpicture}\;\;\;\;\;\;\;\;\;\;
\begin{tikzpicture}
[scale=0.8, vertices/.style={circle, fill=black, inner sep=1pt}]

\draw [fill=black!5] (0,0)--(1,-1)--(2,0)--(1,1)--cycle;

\node[vertices,label=left:{$x_1$}] (x1) at (0,0){};
\node[vertices,label=below left:{$x_2$}] (x2) at (.85,-.3){};
\node[vertices,label=right:{$y_1$}] (y1) at (2,0){};
\node[vertices,label=above right:{$\;y_2$}] (y2) at (1.15,.3){};
\node[vertices,label=above:{$x_3$}] (x3) at (1,1){};
\node[vertices,label=below:{$y_3$}] (y3) at (1,-1){};

\foreach \from/\to in {x1/x2,x2/y1,x3/x1,x3/y1,x3/x2,y3/y1,x2/y3}
	\draw[-] (\from)--(\to);

\foreach \from/\to in {x3/y2,x1/y2,y2/y3,y1/y2}
	\draw[dashed] (\from)--(\to);

\end{tikzpicture}
\end{center}
\end{example}
\begin{theorem}\label{hochsters} (Hochster's Formula) \cite{Ho77}  Let $I_{\Delta}$ be a square-free monomial ideal in variables $X=\{x_1,...,x_n\}$, with Stanley-Reisner complex $\Delta$.  Then if ${\bf m}$ is a square-free monomial with support $W=\{x_{i_1},...,x_{i_j}\}\subseteq X$ with $\deg(m)=j$, we have
$$\betti{i}{\bf m}(k[\Delta])= \dim \widetilde{H}_{j-i-1}(\Delta|_{W}, k),$$
where $\Delta|_W$ is the induced subcomplex of $\Delta$ on vertices in $W$.
\end{theorem}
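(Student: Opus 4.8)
The plan is to reinterpret the multigraded Betti number as a Koszul homology group and then match that group, strand by strand, with the reduced simplicial cochain complex of $\Delta|_{W}$. Recall that $\betti{i}{\mathbf{m}}(\kk[\Delta]) = \dim_\kk \operatorname{Tor}_i^R(\kk[\Delta],\kk)_{\mathbf{m}}$, and that the Koszul complex $K_\bullet = K_\bullet(x_1,\dots,x_n;R)$ is an $R$-free resolution of $\kk = R/(x_1,\dots,x_n)$, so that $\operatorname{Tor}_i^R(\kk[\Delta],\kk)_{\mathbf{m}} = H_i\big((\kk[\Delta]\otimes_R K_\bullet)_{\mathbf{m}}\big)$. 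Writing $K_p = \bigwedge^p R^n$ with basis $e_F$ indexed by $F\subseteq\{1,\dots,n\}$, $|F|=p$, and placing $e_F$ in multidegree $\mathbf{e}_F := \sum_{\ell\in F}\mathbf{e}_\ell$, the multidegree-$\mathbf{m}$ strand is
$$(\kk[\Delta]\otimes_R K_p)_{\mathbf{m}} \;=\; \bigoplus_{F\subseteq W,\ |F|=p}\big(\kk[\Delta]\big)_{\mathbf{m}-\mathbf{e}_F},$$
where only $F\subseteq W$ contribute (since $\mathbf{m}$ is squarefree with support $W$ and $\mathbf{m}-\mathbf{e}_F$ must be a nonnegative vector), and $(\kk[\Delta])_{\mathbf{m}-\mathbf{e}_F}$ is $\kk$ when $W\setminus F$ is a face of $\Delta$ and $0$ otherwise.

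Next I would substitute $\sigma := W\setminus F$, so that in homological degree $p$ the strand is the $\kk$-span of the faces $\sigma \in \Delta|_{W}$ with $|\sigma| = j-p$. Under the reindexing $p \longleftrightarrow j-p-1$ this is exactly the reduced simplicial cochain complex $\widetilde{C}^{\bullet}(\Delta|_{W};\kk)$, the empty face contributing the term in cochain degree $-1$ (homological degree $p = j$). It then remains to check that the Koszul differential $e_F \mapsto \sum_{\ell\in F}\pm\, x_\ell e_{F\setminus\ell}$ — in which the $(F\setminus\ell)$-term survives in $\kk[\Delta]$ precisely when $\sigma\cup\{\ell\}$ is again a face of $\Delta$ — agrees, up to the standard orientation signs, with the simplicial coboundary $\delta\colon \widetilde{C}^{q-1}(\Delta|_{W})\to\widetilde{C}^{q}(\Delta|_{W})$ that adjoins a vertex; this is a routine comparison of incidence coefficients once faces are oriented by the induced order on $W$.

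Granting this identification, $\operatorname{Tor}_i^R(\kk[\Delta],\kk)_{\mathbf{m}} \cong \widetilde{H}^{\,j-i-1}(\Delta|_{W};\kk)$, and since $\kk$ is a field, reduced cohomology is dual to reduced homology degreewise, so $\dim_\kk \widetilde{H}^{\,j-i-1}(\Delta|_{W};\kk) = \dim_\kk \widetilde{H}_{j-i-1}(\Delta|_{W};\kk)$, which is the stated formula. (The complementary half of Hochster's theorem — that $\betti{i}{\mathbf{m}}(\kk[\Delta]) = 0$ unless $\mathbf{m}$ is squarefree — drops out of the same computation: any variable dividing $\mathbf{m}$ to a power $\ge 2$ provides a contracting homotopy on the degree-$\mathbf{m}$ strand of $\kk[\Delta]\otimes_R K_\bullet$.) The main obstacle is entirely bookkeeping: fixing orientation conventions so that the Koszul and simplicial differentials coincide on the nose rather than merely up to isomorphism of complexes, and treating the degenerate cases ($W = \emptyset$, $W \notin \Delta$, or $\Delta|_{W}$ a full simplex) so that the augmentation and the $(-1)$-dimensional reduced (co)homology are accounted for correctly.
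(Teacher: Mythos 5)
The paper offers no proof of this theorem: it is quoted as a classical result with a citation to Hochster \cite{Ho77}, so there is no internal argument to compare yours against. Judged on its own, your proposal is the standard Koszul-complex proof and its outline is correct. The key identifications are all sound: $\betti{i}{\mathbf{m}}(\kk[\Delta])=\dim_{\kk}\operatorname{Tor}_i^R(\kk[\Delta],\kk)_{\mathbf{m}}$ computed from the strand $(\kk[\Delta]\otimes_R K_\bullet)_{\mathbf{m}}$; only $F\subseteq W$ contributes, and $(\kk[\Delta])_{\mathbf{m}-\mathbf{e}_F}\cong\kk$ exactly when $W\setminus F\in\Delta$, which is precisely where the Stanley--Reisner relations enter; the substitution $\sigma=W\setminus F$ turns the Koszul differential (which drops $\ell\in F$ and multiplies by $x_\ell$, surviving iff $\sigma\cup\{\ell\}\in\Delta$) into the simplicial coboundary on $\Delta|_W$, so homological degree $i$ corresponds to cochain degree $j-i-1$, with $F=W$ accounting for the empty face; and the final passage from $\widetilde{H}^{\,j-i-1}(\Delta|_W;\kk)$ to $\widetilde{H}_{j-i-1}(\Delta|_W;\kk)$ is legitimate because $\kk$ is a field (the paper assumes characteristic zero throughout). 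The sign comparison you defer is genuinely routine once $W$ is totally ordered and faces are oriented accordingly, and your parenthetical on non-squarefree multidegrees, though not needed for the statement as given, is also correct. One remark for comparison: some expositions instead dualize first or work with the \emph{upper Koszul complex} at $\mathbf{m}$ so as to land directly in reduced homology without invoking cohomology--homology duality; your route buys nothing less over a field, so the choice is purely one of taste.
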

Combining these two, we have Hochster's formula for the Betti numbers of edge ideals.
\begin{proposition}\label{graphcliques}Let $G$ be a simple graph on vertex set $[n]=\{1,2,...,n\}$ with edge set $E$, and let $I_G=(x_ix_j: \{i,j\}\in E)\subseteq R=k[x_1,...,x_n]$ be the edge ideal of $G$.  Then the Stanley-Reisner complex of $I_G$, denoted $\Delta(I_G)$, is given by
$$\Delta(I_G)= \Gc,$$
the clique closure of the complement graph of $G$ in $[n]$.  So
$$\betti{i}{\bf m}(I_G)=\dhomol{j-i-1}(\Gc,\Bbbk).$$
\end{proposition}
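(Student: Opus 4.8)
The plan is to combine the two earlier propositions with Hochster's formula, essentially by unwinding definitions. First I would establish the identification $\Delta(I_G) = \Gc$ directly from the definition of the Stanley--Reisner complex: a subset $\sigma = \{x_{i_1},\dots,x_{i_r}\}$ of the vertex set lies in $\Delta(I_G)$ exactly when no generator $x_ix_j$ of $I_G$ divides $x_{i_1}\cdots x_{i_r}$, i.e. exactly when no edge $\{i,j\}\in E$ has both endpoints in $\sigma$. That condition says precisely that $\sigma$ spans a complete subgraph of the complement graph $G^c$, so the faces of $\Delta(I_G)$ are exactly the cliques of $G^c$, which is the definition of $\Gc$. (This simultaneously re-derives the earlier proposition that $\Delta_G$ is clique closed with $1$-skeleton $G^c$, so one could instead just cite that proposition and the fact that a clique-closed complex is determined by its $1$-skeleton.)

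Next I would invoke Hochster's formula (Theorem \ref{hochsters}) with $\Delta = \Delta(I_G) = \Gc$: for a squarefree monomial ${\bf m}$ with support $W$ and $\deg({\bf m}) = j = |W|$, this gives $\betti{i}{\bf m}(I_G) = \dhomol{j-i-1}(\Gc|_W,\Bbbk)$. The only remaining point is to check that the induced subcomplex $\Gc|_W$ is itself the clique complex of the induced subgraph, i.e. $\Gc|_W = \widehat{(G^c)|_W} = \widehat{(G|_W)^c}$; this is immediate, since a subset of $W$ spans a clique in $G^c$ iff it spans a clique in the induced subgraph $(G^c)|_W$, so passing to induced subcomplexes commutes with clique closure for flag complexes. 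Substituting this yields the stated formula, with the restriction to $W$ understood exactly as in the statement of Hochster's formula.

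I do not expect a genuine obstacle here, as every step is a definition-chase; the work is bookkeeping. The points to be careful about are: matching the degree $j$ of the monomial ${\bf m}$ with the cardinality of its support $W$; keeping the homological shift $j-i-1$ consistent with the indexing convention used in Theorem \ref{hochsters}; and noting explicitly that for flag complexes the operations ``take the induced subcomplex on $W$'' and ``take the clique closure'' commute, so that the $|_W$ in Hochster's formula may be pushed onto either the graph $G$ or the complex $\Gc$. If one wants the displayed formula read literally (without a visible ``$|_W$''), it should be flagged as the usual shorthand for the restricted complex appearing in Hochster's formula.
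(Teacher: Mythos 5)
Your proposal is correct and matches the paper's treatment: the paper likewise obtains this proposition by combining the earlier observation that $\Delta(I_G)$ is the clique closure of $G^c$ (a definition-chase on the degree-two generators) with Theorem \ref{hochsters}, giving no further argument. Your remark that the displayed formula should carry the restriction $\Gc|_W$ for $W=\supp({\bf m})$ is a fair reading of the paper's shorthand.
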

These will form the primary basis of our Betti number calculations.  Tying these statements all together, we have the following corollary that provides a helpful characterization of graphs and their complements.
\begin{corollary}  Let $G$ be a simple graph, $G^c$ its complement graph and $I_G$ its edge ideal.  The following are equivalent:
\begin{enumerate}
\item $\Ind{G}=1$,
\item $G^c$ has no induced 4-cycles, and
\item $\betti{2}{4}(I_G)=0$.
\end{enumerate}
\end{corollary}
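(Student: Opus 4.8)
The plan is to establish the two equivalences $(1)\Leftrightarrow(2)$ and $(2)\Leftrightarrow(3)$ separately and then chain them. The first rests on Proposition~\ref{indnumber} --- more precisely on the stronger statement proved inside its proof --- and the second on Hochster's formula in the form of Proposition~\ref{graphcliques}.

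For $(1)\Leftrightarrow(2)$, I would first note that if $G$ has no edges then $I_G=0$ and $\Ind{G}=0$, so we may assume harmlessly that $G$ has at least one edge; then $\Ind{G}\geq 1$ automatically, and $\Ind{G}=1$ is equivalent to $\Ind{G}\leq 1$. By the claim established in the proof of Proposition~\ref{indnumber} --- that $G$ has an induced matching of size $r$ if and only if $\Delta_G$ contains the Stanley-Reisner complex of $r$ disjoint edges as an induced subcomplex --- the condition $\Ind{G}\geq 2$ holds exactly when $\Delta_G=\Gc$ contains an induced copy of the Stanley-Reisner complex of two disjoint edges. A direct computation identifies that complex as the $4$-cycle $C_4$ (the boundary of the $2$-dimensional cross polytope), which is triangle-free; and since $\Gc$ is a flag complex with $1$-skeleton $G^c$, it contains an induced copy of $C_4$ as a complex exactly when $G^c$ contains an induced $4$-cycle, chordlessness being automatic for a $4$-cycle. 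Hence $\Ind{G}\geq 2$ if and only if $G^c$ has an induced $4$-cycle, which is $(1)\Leftrightarrow(2)$.

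For $(2)\Leftrightarrow(3)$, I would apply Proposition~\ref{graphcliques}. A squarefree monomial of degree $4$ has support $W$ of size $4$, and $4-2-1=1$, so
$$\betti{2}{4}(I_G)=\sum_{|W|=4}\dhomol{1}\bigl(\Gc|_W\bigr),$$
the sum ranging over all $4$-element subsets $W$ of the vertex set (every vertex is a face of $\Gc$, so $\Gc|_W$ really has four vertices). The heart of the matter is the local claim that a flag complex on $4$ vertices has $\dhomol{1}\neq 0$ if and only if its $1$-skeleton is an induced $4$-cycle: a forest contributes nothing; a $4$-vertex graph containing a triangle has that triangle filled by flagness, and a short case check over such graphs shows the resulting complex is contractible; and the only remaining possibility is the triangle-free graph carrying a cycle, namely $C_4\cong S^1$, for which $\dhomol{1}=1$. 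Granting this, $\betti{2}{4}(I_G)$ counts the induced $4$-cycles of $G^c$ and hence vanishes precisely when $G^c$ has none, which is $(2)\Leftrightarrow(3)$.

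The step I expect to be the main obstacle is the combinatorial bookkeeping in $(2)\Leftrightarrow(3)$: confirming that flagness forces $\dhomol{1}=0$ for every $4$-vertex complex other than $C_4$, and that Hochster's formula indeed deposits these first homologies --- and nothing else --- in Betti position $(2,4)$. A minor wrinkle worth recording is the standing assumption that $G$ has an edge; it enters only through the implication $(2)\Rightarrow(1)$, since the edgeless graph satisfies $(2)$ and $(3)$ but has $\Ind{G}=0\neq 1$.
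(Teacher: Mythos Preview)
Your proposal is correct and follows essentially the same approach as the paper: invoke Proposition~\ref{indnumber} (with $k=1$) for $(1)\Leftrightarrow(2)$, and Hochster's formula in the form of Proposition~\ref{graphcliques} for $(2)\Leftrightarrow(3)$. If anything, you are more careful than the paper---you make explicit the assumption that $G$ has at least one edge (without which $(2)$ and $(3)$ hold but $\Ind{G}=0$), and you actually verify the local claim that a flag complex on four vertices has $\dhomol{1}\neq 0$ exactly when its $1$-skeleton is an induced $C_4$, which the paper's proof asserts without argument.
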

\begin{proof}  We have that $(1)\Leftrightarrow (2)$ from Proposition \ref{indnumber} with $k=1$.  Using Proposition \ref{graphcliques} for graded modules, we have that
$$\betti{2}{4}(I_G)=\sum_{\substack{W\subseteq V\\
|W|=4}
}
\dhomol{1}(\Gc|_W,\Bbbk).$$
This Betti number is precisely nonzero when the $\Gc|_W$ has no cycles of length 4 in the complement graph $G$ if we restrict to any set of vertices of $G$ of size 4.  By Proposition \ref{indnumber}, this is true precisely when there are no pairs of induced disjoint edges in our original graph, as a 4-cycle is the one dimensional cross polytope.
\begin{center}
\begin{tikzpicture} [scale=.8,auto=left, fill=black, vertices/.style={circle, fill=black, inner sep=1pt}]

\node (G) at (-.5,2.5) {$G$};
\node[vertices, label=below:{$x_1$}] (x1) at (0,0){};
\node[vertices, label=above:{$y_1$}] (y1) at (0.2,1){};

\node[vertices, label=below:{$x_2$}] (x2) at (1.8,0){};
\node[vertices, label=above:{$y_2$}] (y2) at (2,1){};

\foreach \from/\to in {x1/y1,x2/y2}
	\draw [-] (\from)--(\to);

\end{tikzpicture}
\;\;\;\;\;\;\;\;\;\;\;\;\;\;\;\;\;\;\;\;\;\;\;\;\;
\begin{tikzpicture}
[scale=0.8, vertices/.style={circle, fill=black, inner sep=1pt}]

\node (G) at (-.5,1.5) {$\Delta_G$};
\node[vertices,label=left:{$x_1$}] (x1) at (0,0){};
\node[vertices,label=right:{$y_1$}] (y1) at (2,0){};

\node[vertices,label=above:{$x_2$}] (x2) at (1,1){};
\node[vertices,label=below:{$y_2$}] (y2) at (1,-1){};

\foreach \from/\to in {x1/y2,x1/x2,y1/y2,y1/x2}
	\draw[-] (\from)--(\to);

\end{tikzpicture}

\end{center}
\end{proof}
Given a free resolution ${\mathcal F}$ of $I_G$, we can keep track of the twists in degree of the modules of our resolution via the $\betti{i}{j}(I_G)$.  This information is placed into the \emph{Betti table of the resolution of $I_G$} as a bookkeeping device for the ranks of these syzygy modules, the $\betti{i}{j}$.  We denote the Betti diagram using the convention of Macaulay 2, shifting the displayed degrees in the ith homological stage down by i rows.



\begin{center}
\begin{tikzpicture} [scale=.8,auto=left,betti/.style={
execute at begin cell=\node\bgroup,
execute at end cell=\egroup;,
execute at empty cell=\node{$\cdot$};
},
vertices/.style={circle, fill=black, inner sep=0pt}]

\node [vertices] (r1) at (-5,.6){};
\node [vertices] (r2) at (-5,-.1){};
\node [vertices] (r3) at (-4,-.1){};
\node [vertices] (r4) at (-4,-.9){};
\node [vertices] (r5) at (-2.8,-.9){};
\node [vertices] (r6) at (-2.8,-1.4){};
\node [vertices] (r7) at (-2.3,-1.4){};
\node [vertices] (r8) at (-2.3,-1.8){};
\node [vertices] (r9) at (-1.7,-1.8){};
\node [vertices] (r10) at (-1.7,-2.2){};
\node [vertices] (r11) at (-1.7,-2.8){};
\node [vertices] (r12) at (-.5,-2.8){};

\foreach \from/\to in {r1/r2, r2/r3, r3/r4,r4/r5,r5/r6,r10/r11,r11/r12}
	\draw[-] (\from)--(\to);
\foreach \from/\to in {r6/r7,r7/r8,r8/r9,r9/r10}
	\draw[dotted] (\from)--(\to);
\matrix [betti] {
-&0&1&2&3&4&$\cdots$&i&$\cdots$&n-2&n-1\\
total: &1&$\beta_{1}$&$\beta_{2}$&$\beta_{3}$&$\beta_{4}$&$\cdots$&$\beta_{i}$&$\cdots$&$\beta_{n-2}$&$\beta_{n-1}$\\
0: &1&&&&&&&&&\\
1:& &$\betti{1}{2}$&$\betti{2}{3}$&$\betti{3}{4}$&$\betti{4}{5}$&$\cdots$&$\betti{i}{i+1}$&$\cdots$&$\betti{n-2}{n-1}$&$\betti{n-1}{n}$\\
2:& &&$\betti{2}{4}$&$\betti{3}{5}$&$\betti{4}{6}$&$\cdots$&$\betti{i}{i+2}$&$\cdots$&$\betti{n-2}{n}$&\\
$\vdots$&&&\\
k:& &&&&$\betti{k}{2k}$&$\cdots$&$\betti{i}{i+k}$&$\cdots$&&\\
};
\end{tikzpicture}
\end{center}

\begin{remark}  The first possible nonzero Betti number in each row is $\betti{i}{2i}$.  This follows immediately from the lcm lattice and from the fact that all generators of $I_G$ are of degree two.  For any fixed $s\geq 2$, it may be the case that $\betti{s}{2s}=0$ but later entries in the row [Betti numbers of form $\betti{i}{i+s}$ for $i>s$] may be nonzero.  This gives us a staircase of sorts walking down the left edge of the Betti table, with each step of length at least one.\\
\\
On the other side of the Betti diagram, no Betti number can occur in degrees greater than $n$, so $\betti{i}{j}=0$ for all $j>n$.
\end{remark}
This idea of a \emph{staircase} walking down the highest degree Betti numbers leads to the definition of our jump sequence.  If we have a Betti table of the following form,




\begin{center}
\begin{tikzpicture} [scale=.8,auto=left,betti/.style={
execute at begin cell=\node\bgroup,
execute at end cell=\egroup;,
execute at empty cell=\node{$\cdot$};
},
vertices/.style={circle, fill=black, inner sep=0pt}]

\node [vertices] (r1) at (-7,.2){};
\node [vertices] (r2) at (-5.1+1.4,.2){};
\node [vertices] (r3) at (-5.1+1.4,-.7){};
\node [vertices] (r4) at (-1.6+1.4,-.7){};
\node [vertices] (r5) at (-1.6+1.4,-1.5){};
\node [vertices] (rk) at (1+1.4,-1.5){};
\node [vertices] (r6) at (1.7+1.4,-1.5){};
\node [vertices] (r7) at (1.7+1.4,-1.9){};
\node [vertices] (r8) at (2.2+1.4,-1.9){};
\node [vertices] (r9) at (2.2+1.4,-2.3){};
\node [vertices] (rj) at (3+1.4,-2.3){};
\node [vertices] (r10) at (5.6+1.4,-2.3){};
\node [vertices] (r11) at (5.6+1.4,-3.3){};
\node [vertices] (r12) at (5.9+1.4,-3.3){};

\foreach \from/\to in {r1/r2,r2/r3,r3/r4,r4/r5,r5/rk,rj/r10,r10/r11,r11/r12}
	\draw[-] (\from)--(\to);
\foreach \from/\to in {rk/r6,r6/r7,r7/r8,r8/r9,r9/rj}
	\draw[dotted] (\from)--(\to);

\node [vertices] (s1) at (-8.4+1.4,0){};
\node [vertices] (s2) at (-5.1+1.4,0){};
\draw[dashed] (s1)--(s2) node[midway, below] {$a_1$};

\node [vertices] (s3) at (-8.4+1.4,-.9){};
\node [vertices] (s4) at (-1.6+1.4,-.9){};
\draw[dashed] (s3)--(s4) node[midway, below] {$a_2$};

\node [vertices] (s5) at (-8.4+1.4,-1.7){};
\node [vertices] (s6) at (1.7+1.4,-1.7){};
\draw[dashed] (s5)--(s6);

\node [vertices] (s7) at (-8.4+1.4,-2.1){};
\node [vertices] (s8) at (2.2+1.4,-2.1){};
\draw[dashed] (s7)--(s8);

\node [vertices] (s3) at (-8.4+1.4,-2.5){};
\node [vertices] (s4) at (5.6+1.4,-2.5){};
\draw[dashed] (s3)--(s4) node[midway, below] {$a_{k-1}$};

\matrix [betti] {
-&0&1&2&3&$a_1+1$&$\cdots$&$a_2+1$&$\cdots$&$a_{k-1}-1$&$a_{k-1}$&$a_{k-1}+1$\\
total: &1&$\beta_{1}$&$\beta_{2}$&$\cdots$&$\beta_{a_1+1}$&$\cdots$&$\beta_{a_2+1}$&$\cdots$&$\beta_{a_{k-1}-1}$&$\beta_{a_{k-1}}$&$\cdots$\\
0: &1&&&&&&&&&\\
1:& &$\betti{1}{2}$&$\betti{2}{3}$&$\ast$&$\ast$&$\ast$&$\ast$&$\ast$\\
2:&\;&\;&\;&\;&$\betti{a_1+1}{a_1+3}$&$\ast$&$\ast$&$\ast$&$\ast$\\
3:&\;&\;&\;&\;&\;&\;&$\betti{a_2+1}{a_2+4}$&$\ast$&$\ast$&$\ast$&$\ast$\\
$\vdots$&\;&\;&\;&\;&\;&\;&\;&\;&$\ast$&$\ast$\\
k:& \;&\;&\;&\;&\;&\;&\;&\;&\;&\;&$\betti{a_{k-1}+1}{s}$\\
};
\end{tikzpicture}
\end{center}
where $s=a_{k-1}+k+1$, with $\betti{a_i+1}{a_i+i+1}\neq 0$ and all Betti numbers below the line are zero, we say that $G$ (or $I_G$) has the jump sequence ${\bf a}_G=[k;a_1,a_2,...,a_k],$ a sequence of increasing positive integers.  More formally, we make the following definition.
\begin{definition}  Let $G$ a graph, $I_G$ its edge ideal, and $\betti{i}{j}=\betti{i}{j}(I_G)$ as above.  If $I_G$ has $\reg(I_G)=k+1$, then $I_G$ has a \emph{jump sequence of length k-1} of the form
$${\bf a}_G=[k;a_1,...,a_{k-1}],$$
where $a_r=\min\{ i : \betti{i}{i+r+1}\neq 0\}-1$.  If $I_G$ has a linear resolution, we say its jump sequence is ${\bf a}=[1;\emptyset]$.
\end{definition}
First, we provide a few examples to motivate the use of this definition.
\begin{example}  Let $G$ be the graph of the anticycle of length n, e.g. $G^c=C_n$.  The Betti diagram is of the form:




\begin{center}
\begin{tikzpicture} [betti/.style={
execute at begin cell=\node\bgroup,
execute at end cell=\egroup;,
execute at empty cell=\node{$\cdot$};
},
vertices/.style={circle, fill=black, inner sep=0pt}] 

\node [vertices] (s1) at (-2.3,-.9){};
\node [vertices] (s2) at (3,-.9){};
\draw[-] (s1)--(s2) node[midway, below] {$a_1=n-3$};

\matrix [betti] {
-&0&1&2&3&$\cdots$&$n-4$&$n-3$&$n-2$\\
total: &1&$\beta_1$&$\beta_2$&$\beta_3$&$\cdots$&$\beta_{n-4}$&$\beta_{n-3}$&$\beta_{n-2}$\\
0: &1&&&&$\cdots$&&&\\
1:& &$\ast$&$\ast$&$\ast$&$\cdots$&$\ast$&$\ast$&\\
2: &\;&\;&\;&\;&\;&\;&\;&1\\
};
\end{tikzpicture}
\end{center}
So all singleton sequences are possible, with jump sequence $[2;n]$ for edge ideal of the complement graph of the $n+3$ cycle.
\end{example}


\begin{example}\label{balloon} Let $G$ be the graph with edge ideal
\begin{equation*}
\begin{split}
I_G&=(x_1x_3, x_1x_4, x_2x_4, x_2x_5, x_3x_5, x_2x_6, x_3x_6, x_4x_6,x_3x_7, x_4x_7, x_5x_7, x_1x_8,x_4x_8,\\
&x_5x_8, x_6x_8,x_1x_9,x_2x_9, x_5x_9, x_6x_9, x_7x_9, x_1x_{10}, x_2x_{10}, x_3x_{10},x_7x_{10},x_8x_{10},
x_6x_{11},\\
&x_7x_{11}, x_8x_{11}, x_9x_{11}, x_{10}x_{11},x_1x_{12}, x_2x_{12},x_3x_{12}, x_4x_{12},x_5x_{12}, x_{11}x_{12}).
\end{split}
\end{equation*}
This graph $G$ has $\Ind{G}=1$, and a complement clique complex isomorphic to the icosahedron.  Its Stanley Reisner complex and Betti diagram have the following forms:




\begin{center}
\begin{tikzpicture}[scale=0.8, vertices/.style={circle, fill=black, inner sep=0pt}]

\node (D) at (0,1){$\Delta_G$};
\node[vertices] (w1) at (1.5,1.3) {};
\node[vertices] (w2) at (1.5,-2.3) {};

\node[vertices] (x1) at (0,0) {};
\node[vertices] (x2) at (.8,-.5) {};
\node[vertices] (x3)  at (2.2,-.5) {};
\node[vertices] (x4)  at (3,0){};
\node[vertices] (x5)  at (1.4,.3){};

\node[vertices] (y1) at (0,0-1) {};
\node[vertices] (y2) at (.8,-.5-1) {};
\node[vertices] (y3)  at (2.2,-.5-1) {};
\node[vertices] (y4)  at (3,0-1){};
\node[vertices] (y5)  at (1.4,.3-1){};

\foreach \from/\to in {x1/x2,x2/x3,x3/x4,y1/y2,y2/y3,y3/y4,x1/y1,x2/y2,x3/y3,x4/y4,x1/w1,x2/w1,x3/w1,x4/w1,y1/w2,y2/w2,y3/w2,y4/w2,y1/x2,y2/x3,y3/x4}
	\draw[-] (\from)--(\to);
\foreach \from/\to in {x1/x5,x4/x5,y1/y5,y4/y5,x5/y5,x5/w1,y5/w2,y4/x5,y5/x1}
	\draw[dotted] (\from)--(\to);

\end{tikzpicture}\;\;\;\;\;
\begin{tikzpicture} [betti/.style={
execute at begin cell=\node\bgroup,
execute at end cell=\egroup;,
execute at empty cell=\node{$\cdot$};
}] 
\matrix [betti] {
-&0&1&2&3&4&5&6&7&8&9\\
total: &1&36&160&327&412&412&327&160&36&1\\
0: &1&&&&&&&&&\\
1:&&36&160&315&300&112&12&&&\\
2:&&&&12&112&300&315&160&36&\\
3:&&&&&&&&&&1\\
};
\end{tikzpicture}
\end{center}




\begin{center}

\end{center}
This has jump sequence ${\bf a}=[3;2,8]$.
As our $\Delta_G$ can be represented as a (regular) convex simplicial polytope, we have the ring $\Bbbk[\Delta_G]$ is Gorenstein and shellable.  Hence, restricting to the classes of Gorenstein or shellable clique complexes  cannot provide a sharper bound than $\reg(I_G)\leq 5$ at best.  Example \ref{600cell} is a convex simplicial 4-polytope so, in fact, regularity bounds in these classes cannot be sharpened below $\reg(I_G)\leq 5$.  There exist Gorenstein graphs $G$ with $\Ind{G}=1$, $\reg(I_G)=4$ and arbitrarily high projective dimension, which we construct in Section \ref{highregclasses}.
\end{example}
Sometimes it will be more convenient to discuss the number of homological stages between a pair of degree jumps instead of the sequences of homological degrees these jumps occur at.  We use the \emph{relative jump sequence} in this case.
\begin{definition}  Let $G$ be a graph, $I_G$ its edge ideal, and ${\bf a}_G$ its jump sequence.  Then the \emph{relative jump sequence of $G$} is
$${\bf r}_G=[k; r_1,r_2,...,r_{k-1}],$$
where $r_1=a_1$, $r_2=a_2-a_1$, $r_3=a_3-a_2$, etc.
\end{definition}
The advantage of working with the relative jump sequence is that the topological interpretation of the $r_i$ is more straightforward than that of the $a_i$.  If we have a subset of vertices $W\subset V$ of minimal size for which $\dim\widetilde{H}_{i-2}(\Gc|_W)\neq 0$, then $r_i+2$ is the number of vertices that must be added to W to find a set of vertices $W'$ with $\dim\widetilde{H}_{i-1}(\Gc|_{W'})\neq 0$.  It might be the case that a particular W has no subset of size $r_i+2$ for which the rank of this homology is nonzero, but there exist at least \emph{one} subset $W$ such that we can find such a $W'$.\\
\\
In the edge ideal introduced in Example \ref{balloon}, the jump sequence was ${\bf a}=[3;2,8]$ and the relative jump sequence is ${\bf r}=[3;2,6]$.
\begin{example}\label{600cell} The first example found with $\Ind{G}=1$ and $\reg(I_G)=5$ was the complement graph of the 1-skeleton of the 600-cell [or hexacosichoron, $\Delta_H$] which we denote $G_{H}=(\Delta_H)_1^c$.  This was first noted by Nevo and Peeva (private communication.)  This is a graph on $120$ vertices with 6420 edges, with an edge ideal $I_{G_H}$ with regularity 5.\\
\\
Since $\Delta_H$ is equal to the clique closure of its 1-skeleton, and as its smallest induced cycle is of length 5, we have that $\Delta_H=\dim\widehat{G_H^c}$ with induced matching size 1.  The edge ideal $I_{G_H}$ then has jump sequence ${\bf a}=[3;2,8,115]$.\\
\\
We have that the $\Ind{G}=1$ from the smallest cycles being of length 5, that the smallest induced simplicial 2-sphere in $\Delta_H$ is an icosahedron (one for each vertex of $\Delta_H$, as an icosahedron formed of 20 tetrahedral cells lie around each vertex,) and that the entire complex is a simplicial 3-sphere on 120 vertices.  So our number of vertices involved in the minimal reduced homology generators are 5, 12, and 120 respectively -- giving rise to jump sequence ${\bf a}=[3;2,8,115]$.
\end{example}



\section{Bounds on Jump Sequences}\label{bounds}

We now have the terminology to rephrase Question \ref{mainquestion1} more precisely:
\begin{question}  Given any integer $k$ and a strictly increasing sequence $\{a_1,...,a_{k-1}\}$, does there exists an edge ideal $I_G$ with jump sequence $[k;a_1,...,a_k]$?
\end{question}
For arbitrary sequences, this question is answered negatively by the following theorem.
\begin{theorem}\label{maintheorem}  Let $I_G$ be an edge ideal with jump sequence ${\bf a}=[k;a_1,a_2,...,a_{k-1}]$ and relative jump sequence $[k;r_1,r_2,...,r_{k-1}]$.  Then equivalently
$$2 a_1\leq a_2\text { or }r_1\leq r_2.$$
\end{theorem}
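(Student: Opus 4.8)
The plan is to work entirely on the topological side via Proposition~\ref{graphcliques}, translating the hypothesis $\betti{a_1+1}{a_1+3}\neq 0$ (first nonlinear syzygy, corresponding to reduced $\homol{1}$) into the existence of a vertex set $W$ with $|W|=a_1+3$ and $\homol{1}(\Gc|_W)\neq 0$, and showing that any vertex set $W'$ with $\homol{2}(\Gc|_{W'})\neq 0$ must satisfy $|W'|\geq 2(a_1+3)-2$, i.e. $|W'|\geq a_2+4$ forces $a_2\geq 2a_1+2\geq 2a_1$. Equivalently in relative terms, if $W$ is a minimal-size set carrying nonzero $\homol{i-2}$ and $W'$ carries nonzero $\homol{i-1}$, then $|W'|-|W|\geq |W|-(i-1)$ roughly; the cleanest formulation is the $i=2$ case. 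So the first step is to fix $W$ minimal with $\homol{1}(\Gc|_W)\neq 0$, note that minimality forces $\Gc|_W$ to be connected and in fact that every proper induced subcomplex has vanishing $\homol{1}$ (and the whole complex has vanishing $\homol{0}$, being connected).

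The heart of the argument should be a Mayer--Vietoris / restriction lemma: if $\Gc|_{W'}$ has $\homol{2}\neq 0$, I want to exhibit \emph{two} disjoint (or nearly disjoint) ``obstruction'' subsets inside $W'$ each looking like a minimal $\homol{1}$-carrier, forcing $|W'|$ to be at least roughly twice the minimal $\homol{1}$-size. Concretely: pick a vertex $v\in W'$; the complex $\Gc|_{W'}$ is the union of the closed star of $v$ and $\Gc|_{W'\setminus v}$, whose intersection is the link of $v$ (inside $\Gc|_{W'}$). The closed star is a cone, hence acyclic, so Mayer--Vietoris gives a surjection $\homol{2}(\Gc|_{W'})\to \homol{1}(\mathrm{lk}_{W'}(v))$ up to the contribution of $\homol{2}(\Gc|_{W'\setminus v})$; more carefully one gets that either $\homol{2}(\Gc|_{W'\setminus v})\neq 0$ or $\homol{1}$ of the link is nonzero. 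Since the link of $v$ in a flag complex is again flag and is an induced subcomplex on $N_{\Gc}(v)\cap W'$, a nonzero $\homol{1}$ there already produces an induced $\homol{1}$-carrier of size $\geq a_1+3$ \emph{avoiding $v$ and its non-neighbors among $W'$}. Iterating the ``delete a vertex'' move handles the first alternative by induction on $|W'|$; the key is that once we have peeled off enough vertices to find one $\homol{1}$-carrier $A$, the vertices \emph{not} in $N[A]$ must still carry a $1$-cycle of the original $2$-cycle's boundary data, yielding a second carrier $B$ with $A\cap B$ controlled, so $|W'|\geq |A|+|B|-|A\cap B|$.

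The main obstacle is making the ``two disjoint carriers'' intuition precise: a priori the two $\homol{1}$-classes extracted from a $\homol{2}$-class could share many vertices, so the naive bound $|W'|\geq 2(a_1+3)$ need not hold on the nose — which is exactly why the theorem only claims $2a_1\leq a_2$ (equivalently $|W'|\geq 2a_1+4 = 2(a_1+3)-2$) rather than something sharper, and why the companion Proposition from \cite{Wh11} needs genuinely harder technical work for the improved bounds. So I would aim the Mayer--Vietoris bookkeeping at producing two $\homol{1}$-carriers overlapping in at most $2$ vertices (the ``seam'' of the decomposition), proving $a_2+4 = |W'| \geq 2(a_1+3) - 2$. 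A fallback, if the disjointness is hard to control directly, is a degeneration/regularity argument: use that $\reg$ of $I_G$ restricted to $W'$ is $\geq 3$ and invoke the lcm-lattice structure together with the fact (Remark in the excerpt) that the first nonzero entry in row $s$ is at homological position $\geq 2s$ — applied inductively to the ``second nonlinear strand'' viewed as the first nonlinear strand of a suitable subresolution — to bootstrap $2a_1\leq a_2$ from $a_1\leq a_1$. I expect the Mayer--Vietoris route to be the one that actually closes, with the overlap analysis being the step that requires care.
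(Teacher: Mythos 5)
Your framework is right — reduce to a minimal $\homol{2}$-carrier $W'$ with $|W'|=a_2+4$ on which every proper induced subcomplex has vanishing $\homol{2}$ (this is exactly the paper's Subgraph Reduction Lemma), and then use Mayer--Vietoris to force an $\homol{1}$-carrier whose size contradicts the minimality encoded in $a_1$. Your star/link/deletion decomposition is also sound as far as it goes: since the closed star of $v$ is a cone and $\homol{2}(\Gc|_{W'\setminus v})=0$ by minimality, the connecting map embeds $\homol{2}(\Gc|_{W'})$ into $\homol{1}$ of the link (an injection, not a surjection as you wrote, but the conclusion $\homol{1}(\mathrm{lk}(v))\neq 0$ stands). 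However, this only yields \emph{one} carrier, sitting inside $N(v)\cap W'$, and hence only $|W'|\geq a_1+4$.

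The genuine gap is the production of the second carrier. Your claim that "the vertices not in $N[A]$ must still carry a $1$-cycle of the original $2$-cycle's boundary data" is unsubstantiated — there is no reason the restriction of a $2$-cycle to the non-neighbors of a carrier supports any homology, and no amount of iterated vertex deletion controls the overlap of the two classes you hope to extract. The paper's decomposition is different and is precisely what closes this: take a \emph{minimal induced cycle} $c$ in $G^c$ on vertex set $W$, $|W|=a_1+3$ (not a cycle in a link), pick two nonadjacent vertices $v,w$ on $c$, split $c$ into the two arcs $V_1,V_2$ from $v$ to $w$, and set $K_i=(V\setminus W)\cup V_i$. Minimality of $c$ means there are no chords between the interiors of the two arcs, so $\Delta_G=\Delta_{K_1}\cup\Delta_{K_2}$ with $\Delta_{K_1}\cap\Delta_{K_2}=\Delta_{K'}$, $K'=(V\setminus W)\cup\{v,w\}$; Mayer--Vietoris (with $\homol{2}(\Delta_{K_i})=0$ by the reduction) forces $\homol{1}(\Delta_{K'})\neq 0$, and $|K'|=a_2-a_1+3\geq a_1+3$ gives $2a_1\leq a_2$. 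In other words, the two carriers overlapping in exactly two vertices are the minimal cycle $W$ itself and the set $K'$, and the "seam" $\{v,w\}$ is chosen on the minimal cycle, not discovered by peeling vertices off the $2$-carrier. Without this construction (or an equivalent substitute), neither your Mayer--Vietoris route nor the lcm-lattice fallback constitutes a proof.
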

To standardize terminology, let $n(v)$ denote the set of neighbors of a vertex $v\in V$, with the degree of a vertex $v\in V$ equal to the size of this set, $\deg(v)=|n(v)|$.\\
\\
It is convenient to reduce this problem for general jump sequences to a problem on jump sequences of length 2.  The following lemma allows us to go even further:



\begin{lemma}[Subgraph Reduction Lemma]\label{reductionlemma} Let $G$ be a graph such that $I_G$ has jump sequence $[k;a_1,a_2,...,a_n],$ with $k\geq 3$.  There exists an induced subgraph $H$ of $I_G$ on vertex set $W\subset V$ of size $|W|=a_2+4$ such that $I_H$ has jump sequence $[3; a_1',a_2]$, and $a_1\leq a_1'$, and such that $H$ has no induced subgraphs $W'\subseteq W$ such that $\dim\widetilde{H}_2((\Delta_H)|_{W'},k)\neq 0$.
\end{lemma}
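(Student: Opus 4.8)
The plan is to build $H$ by starting from a witness of the $a_2$-jump and then carefully pruning vertices while monitoring the relevant reduced homology groups via Hochster's formula (Proposition \ref{graphcliques}). Concretely, since $I_G$ has jump sequence $[k;a_1,\dots,a_{k-1}]$ with $k\geq 3$, we have $\betti{a_2+1}{a_2+4}(I_G)\neq 0$, so by Proposition \ref{graphcliques} there is a vertex set $W_0\subseteq V$ with $|W_0| = a_2+4$ and $\dhomol{2}(\Gc|_{W_0},\kk)\neq 0$; moreover, by minimality of $a_2$ in the jump sequence, there is no $W'\subseteq V$ with $|W'|<a_2+4$ and $\dhomol{2}(\Gc|_{W'},\kk)\neq 0$. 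I would take $H := G|_{W_0}$ (equivalently, $\Delta_H = \Gc|_{W_0}$). The condition $|W_0| = a_2+4$ is then immediate, and the property that no induced subcomplex $W'\subsetneq W_0$ has $\dhomol{2}\neq 0$ follows at once from the global minimality of $a_2$, since any such $W'$ would be a strictly smaller witness in $G$ itself.

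The substantive part is to check that $I_H$ has jump sequence exactly $[3; a_1', a_2]$ with $a_1 \leq a_1'$, i.e., (i) $\reg(I_H) = 4$, (ii) the first nonzero $\betti{i}{i+2}(I_H)$ occurs at $i = a_2+1$ (this is the "$a_2$" in the length-2 sequence, and it matches because homology in degree $2$ over any subset of $W_0$ is homology over a subset of $V$), and (iii) the first nonzero $\betti{i}{i+1}(I_H)$ occurs at some $i = a_1'+1$ with $a_1' \geq a_1$. For (i): $\reg(I_H)\geq 4$ since $\dhomol{2}(\Delta_H,\kk)\neq 0$ gives a nonzero $\betti{a_2+1}{a_2+4}(I_H)$; and $\reg(I_H)\leq 4$ because a nonzero $\betti{i}{i+3}(I_H)$ would require some $W'\subseteq W_0$ with $\dhomol{3}(\Gc|_{W'},\kk)\neq 0$, which by Proposition \ref{graphcliques} forces $\betti{i}{i+3}(I_G)\neq 0$ and hence $\reg(I_G)\geq 4+1 = 5$... but here I need $\reg(I_G) = k+1$ could be large, so this direction needs the finer input that $\betti{i}{i+3}(I_G) = 0$ for $i \leq a_2+3$ — which holds because the $a_3$-jump satisfies $a_3 > a_2$, so $\betti{i}{i+3}(I_G)=0$ for $i\leq a_2+1$, and I must also rule out $i\in\{a_2+2, a_2+3\}$, but $|W'|=i+3\geq a_2+5 > a_2+4 = |W_0|$ makes this vacuous. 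For (ii): the minimal $i$ with $\betti{i}{i+2}(I_H)\neq 0$ is the minimal size (minus $4$) of a $W'\subseteq W_0$ with $\dhomol{2}(\Gc|_{W'},\kk)\neq 0$, which is exactly $a_2+4-4 = a_2$ by the minimality established above, using $W' = W_0$ itself. For (iii): any $W'\subseteq W_0$ with $\dhomol{1}(\Gc|_{W'},\kk)\neq 0$ is also a subset of $V$, so the minimal such $|W'|$ over subsets of $W_0$ is at least the minimal such size over all of $V$, which is $a_1+3$; hence $a_1'\geq a_1$.

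The main obstacle I anticipate is (iii) combined with the possibility that $\Delta_H$ has \emph{no} nonzero $\dhomol{1}$ at all over any subset — in which case $I_H$ would have a length-$1$ gap in its jump sequence, not length $2$, and the claimed form $[3;a_1',a_2]$ fails. I would handle this by arguing that the existence of a nontrivial $2$-cycle class in $\Delta_H = \Gc|_{W_0}$ forces, via a Mayer–Vietoris or long-exact-sequence argument on deletions $\Gc|_{W_0} = \Gc|_{W_0\setminus v} \cup \mathrm{star}$, the existence of a subset supporting nonzero $\dhomol{1}$: deleting a vertex from a minimal $2$-cycle witness and tracking the connecting homomorphism $\dhomol{2}(\Delta_H) \to \dhomol{1}(\text{link or deletion})$ should produce a nonvanishing lower homology somewhere, using that $\dhomol{2}$ cannot "survive" deletion if it were minimal. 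This step — ensuring the gap structure is genuinely length $2$ and not shorter — is where the real combinatorial-topological work lies, and it is essentially the same phenomenon that powers the left edge "staircase" remark; I would make it precise by an induction on $|W_0|$ showing that a minimal-support nonzero $\dhomol{i}$ always sits above a nonzero $\dhomol{i-1}$ on some subset.
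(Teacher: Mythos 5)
Your construction is essentially the paper's: take $H=G|_{W_0}$ for a minimal witness $W_0$ of the $a_2$-jump and use the fact that induced subcomplexes of $\Gc|_{W_0}$ are induced subcomplexes of $\Gc$ to obtain $a_1\le a_1'$ and the minimality property (the paper additionally normalizes the choice of $W_0$ by extremizing the minimal induced cycle length, but that refinement is not needed for the statement as given). The two checks you flag --- that $\reg(I_H)=4$ (watch the off-by-one: regularity $5$ corresponds to $\betti{i}{i+4}$, i.e.\ to $\dhomol{3}$ of a subset, which is ruled out because such a witness inside $W_0$ would force $a_3\le a_2$) and that some subset of $W_0$ actually supports nonzero $\dhomol{1}$ --- are glossed over in the paper, and the vertex-deletion Mayer--Vietoris you sketch for the latter is precisely the paper's later Lemma \ref{wheels}.
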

\begin{proof}  This follows from the definition of our jump sequences,
\begin{equation*}a_r=\min \{ i : \betti{i}{j}\neq 0\text{ and } j-i=r\} -1.\end{equation*}
So $a_2$ is the smallest Betti index such that $\betti{a_2+1}{a_2+4}\neq 0$.  Via Theorem \ref{hochsters}, this is equivalent to saying there exists some subset of vertices $W\subset V$ such that $|W|=a_2+4$ with the dimension of the reduced second homology of $(\Delta_G)|_W=(\Delta_{G|_W})$ nonzero, and no smaller subset $W'\subset V$ will give us nonzero $\widetilde{H}_2$.  Let ${\mathcal W}$ denote the set of all such $W\subseteq V$.\\
\\
Among such subsets $W$, choose one such that the size of the smallest induced cycle in $(\Delta_G)|_W$, i.e. given any pair $W,W'\in{\mathcal W}$, if $c$ is an induced cycle of minimal size in $\Delta_W$, then there exists a cycle $c'\in W'$ in $\Delta|_{W'}$ such that the $|c'|\leq |c|$.  Let $H$ be our induced subgraph of $G$ on vertex set $W$.\\
\\
By construction, the minimal cycles of $G^c$ are of length smaller than or equal to the minimal cycles of $H^c$, and the minimal vertex sets of $G$ on which induced subcomplexes of $\Gc$ have $\widetilde{H}_2\neq 0$ are of the same size as those of $\widehat{H^c}$, so for $G$ we have the jump sequence of $I_G$ is $[k;a_1,a_2,...,a_{k-1}]$ and the jump sequence of $I_H$ is $[3;a_1',a_2]$ with $a_1\leq a_1'$.
\end{proof}
The upshot of Lemma \ref{reductionlemma} is the following:  If we can prove the Theorem \ref{maintheorem} holds for graphs of the form $H$, $|H|=a_2+5$, with jump sequence $a=[3;a_1',a_2]$, we will have that it holds for all graphs G, using the inequalities $2a_1\leq 2a_1'\leq a_2$.  We can now reduce the general problem to cases where Stanley-Reisner complex of the graph is our minimal generator of nonzero $\widetilde{H}_2$, and removing any vertex $v\in H$ will drop the dimension of the homology by at least one.
\begin{proof}[Proof of Theorem \ref{maintheorem}.]  Without loss of generality, assume $G$ is a graph of the form introduced in Lemma \ref{reductionlemma}.  If $a_1=1$, then $a_2\geq 2$ by $I_G$ generated in degree 2.  So we consider only the case where $a_1\geq 2$.  We note that in this case, the size of the vertex set of $G$ is $|V|=a_2+4$.\\
\\
Assume by contradiction that $2a_1> a_2$, and let $W$ be the vertex set minimal induced cycle $c\in G^c$.  We have that the length of $c=|W|=a_1+3$.  Choose two nonadjacent vertices $\{v,w\}$ in this cycle $c$.  Orient the cycle from $v$ to $w$, then back to $v$, and partition the vertices in $c$ into sets $V_1,V_2\subset W$, such that
\begin{align*}
V_1&=\{v,v_1,v_2,...,v_k,w\}\\
V_2&=\{w,w_1,w_2,...,w_{a_1-k+1},v\}
\end{align*}
where $\{v,v_1,v_2,...,v_k,w\}$ and $\{w,w_1,w_2,...,w_{a_1-k+1},v\}$ are vertices in the oriented paths from $v$ to $w$, then from $w$ to $v$, endpoints inclusive.
Let $K=V\backslash W$, and $K_1=K\cup V_1$, $K_2=K\cup V_2$.

\begin{center}
\begin{tikzpicture}
[scale=.8,auto=left,vertices/.style={circle, fill=black, inner sep=1pt}]
\node (V1) at (-.7,1.5){$V_1$};

\draw [->] (-1,.6)--(0,1.5);

\node [vertices,label=left:{$v_1$}] (a) at (0,0){};
\node [vertices,label=above left:{$v_2$}] (b) at (1-.2,.6){};
\node [vertices,label=above:{$v_3$}] (c) at (1.4, .8){};
\node [vertices,label=above:{$w$}] (d) at (2.3, .8){};
\node [vertices,label=above right:{$w_1$}] (e) at (2+.2+1,.6){};
\node [vertices,label=right:{$w_2$}] (f) at (3+1,0){};
\node [vertices,label=below:{$w_3$}] (g) at (2+.2+1,-.6){};
\node [vertices,label=below:{$w_4$}] (h) at (1.8,-.8){};
\node [vertices,label=below left:{$v$}] (i) at (1-.2,-.6){};

\foreach \from/\to in {a/b,b/c,c/d,a/i}
	\draw [-,thick] (\from) -- (\to);

\foreach \from/\to in {d/e,e/f,f/g,g/h,h/i}
	\draw [-, black!30] (\from) -- (\to);
\draw [dashed, black!30] (i)--(d);

\end{tikzpicture}
\;\;\;\;\;\;
\begin{tikzpicture}
[scale=.8,auto=left,vertices/.style={circle, fill=black, inner sep=1pt}]

\node (V2) at (4.2,-1.3){$V_2$};
\draw [->] (4.2,-.6)--(3.5,-1.2);
\node [vertices,label=left:{$v_1$}] (a) at (0,0){};
\node [vertices,label=above left:{$v_2$}] (b) at (1-.2,.6){};
\node [vertices,label=above:{$v_3$}] (c) at (1.4, .8){};
\node [vertices,label=above:{$w$}] (d) at (2.3, .8){};
\node [vertices,label=above right:{$w_1$}] (e) at (2+.2+1,.6){};
\node [vertices,label=right:{$w_2$}] (f) at (3+1,0){};
\node [vertices,label=below:{$w_3$}] (g) at (2+.2+1,-.6){};
\node [vertices,label=below:{$w_4$}] (h) at (1.8,-.8){};
\node [vertices,label=below left:{$v$}] (i) at (1-.2,-.6){};

\foreach \from/\to in {a/b,b/c,c/d,a/i}
	\draw [-,black!30] (\from) -- (\to);

\foreach \from/\to in {d/e,e/f,f/g,g/h,h/i}
	\draw [-, thick] (\from) -- (\to);
\draw [dashed, black!50] (i)--(d);

\end{tikzpicture}
\end{center}
We consider the complexes $\Delta_{K_1}=\left(\Delta_G\right)|_{K_1}$ and  $\Delta_{K_2}=\left(\Delta_G\right)|_{K_2}$, on these vertex sets $K_1$ and $K_2$.  We also let $\Delta_{K'}=\Delta_{K_1}\cap\Delta_{K_2}$, and $K'=K_1\cap K_2= K\cup\{v,w\}$.
\begin{center}
\begin{tikzpicture}
[scale=.8,auto=left,vertices/.style={circle, fill=black, inner sep=1.5pt}]

\node [draw, fill=black, inner sep=0pt](j) at (-3.3,1.7){};
\node [draw, fill=black, inner sep=0pt](k) at (-3.3,-1.5){};
\node (l) at (-1.5,1.7){};
\node (m) at (-1.7,-1.5){};
\node (n) at (-3.8, -1.5) {$K$};

\foreach \from/\to in {j/k}
	\draw [-] (\from)--(\to);
\foreach \from/\to in {j/l,k/m}
	\draw [->] (\from)--(\to);

\draw (0,0) circle (2);
\draw [rotate=-10, dashed](2,0) arc (0:80:2 and .4);
\draw [line width=1.3pt, rotate=-10, -](-2,0) arc (180:80:2 and .4);
\draw [line width=1.3pt, rotate=-10] (-2,0) arc (180:260:2 and .4);
\draw [rotate=-10, dashed] (2,0) arc (360:260:2 and .4);

\draw [fill=black!20, rotate=-10] (1.73,.99) arc (30:150: 2 and 2);

\draw [fill=black!20, rotate=-10] (1.73,-.99) arc (330:210: 2 and 2);

\filldraw [fill=black!10,line width=0 pt, rotate=-10,dashed] (1.72,-1) arc(0:180:1.7 and .3);
\filldraw [fill=black!10,line width=0 pt,rotate=-10] (-1.72,-1) arc(180:360:1.7 and .3);
\filldraw [fill=black!20,line width=0 pt,rotate=-10,dashed] (1.72,1) arc(0:180:1.7 and .3);
\filldraw [fill=black!20,line width=0 pt,rotate=-10] (-1.72,1) arc(180:360:1.7 and .3);

\node[vertices, label=below right:{$w$}] (w) at (.45,.3){};
\node[vertices, label=above left:{$v$}] (v) at (-.45,-.3){};

\node [scale=.7, label=left:{$V_1\;\;$}](hem) at (-1.8,.3){};

\node (K1) at (0,-3) {$K_1$};

\end{tikzpicture}
\;\;\;\;\;
\begin{tikzpicture}
[scale=.8,auto=left,vertices/.style={circle, fill=black, inner sep=1.5pt}]


\draw (0,0) circle (2);
\draw [line width=1.3pt,rotate=-10](2,0) arc (0:80:2 and .4);
\draw [rotate=-10, dashed](-2,0) arc (180:80:2 and .4);
\draw [rotate=-10, dashed] (-2,0) arc (180:260:2 and .4);
\draw [line width=1.3pt,rotate=-10] (2,0) arc (360:260:2 and .4);

\draw [fill=black!20, rotate=-10] (1.73,.99) arc (30:150: 2 and 2);

\draw [fill=black!20, rotate=-10] (1.73,-.99) arc (330:210: 2 and 2);

\filldraw [fill=black!10,line width=0 pt, rotate=-10,dashed] (1.72,-1) arc(0:180:1.7 and .3);
\filldraw [fill=black!10,line width=0 pt,rotate=-10] (-1.72,-1) arc(180:360:1.7 and .3);
\filldraw [fill=black!20,line width=0 pt,rotate=-10,dashed] (1.72,1) arc(0:180:1.7 and .3);
\filldraw [fill=black!20,line width=0 pt,rotate=-10] (-1.72,1) arc(180:360:1.7 and .3);

\node[vertices, label=below right:{$w$}] (w) at (.45,.3){};
\node[vertices, label=above left:{$v$}] (v) at (-.45,-.3){};

\node [scale=.7, label=right:{$\;\;V_2$}](hem) at (1.8,-.3){};

\node (K2) at (0,-3) {$K_2$};

\end{tikzpicture}
\begin{tikzpicture}
[scale=.8,auto=left,vertices/.style={circle, fill=black, inner sep=1.5pt}]


\draw (0,0) circle (2);
\draw [rotate=-10, dashed](2,0) arc (0:80:2 and .4);
\draw [rotate=-10, dashed](-2,0) arc (180:80:2 and .4);
\draw [rotate=-10, dashed] (-2,0) arc (180:260:2 and .4);
\draw [rotate=-10, dashed] (2,0) arc (360:260:2 and .4);

\draw [fill=black!20, rotate=-10] (1.73,.99) arc (30:150: 2 and 2);

\draw [fill=black!20, rotate=-10] (1.73,-.99) arc (330:210: 2 and 2);

\filldraw [fill=black!10,line width=0 pt, rotate=-10,dashed] (1.72,-1) arc(0:180:1.7 and .3);
\filldraw [fill=black!10,line width=0 pt,rotate=-10] (-1.72,-1) arc(180:360:1.7 and .3);
\filldraw [fill=black!20,line width=0 pt,rotate=-10,dashed] (1.72,1) arc(0:180:1.7 and .3);
\filldraw [fill=black!20,line width=0 pt,rotate=-10] (-1.72,1) arc(180:360:1.7 and .3);

\node[vertices, label=below right:{$w$}] (w) at (.45,.3){};
\node[vertices, label=above left:{$v$}] (v) at (-.45,-.3){};

\node (K2) at (0,-3) {$K'=K_1\cap K_2$};

\end{tikzpicture}
\end{center}
\begin{lemma}  We have that
\begin{enumerate}
\item $\Delta_{K'}=\Delta_{K_1}\cap \Delta_{K_2}=\Delta_{K_1\cap K_1}=\left(\Delta_G\right)|_{K'}\text{  and}$
\item $\Delta_G=\Delta_{K_1}\cup\Delta_{K_2}.$
\end{enumerate}
\end{lemma}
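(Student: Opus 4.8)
The plan is to handle the two assertions separately: the first is purely formal, and the second comes down to one small combinatorial observation about how the chosen pair $\{v,w\}$ cuts the cycle $c$.

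For assertion (1), I would use the elementary identity that for any simplicial complex $\Delta$ on vertex set $V$ and any subsets $A,B\subseteq V$,
$$\Delta|_A\cap\Delta|_B=\Delta|_{A\cap B},$$
since a face $\sigma$ lies in the left-hand side exactly when $\sigma\in\Delta$ and $\sigma$ is supported on both $A$ and $B$, i.e.\ on $A\cap B$. Applying this with $\Delta=\Delta_G$, $A=K_1$, $B=K_2$, and recalling that $\Delta_{K_i}=(\Delta_G)|_{K_i}$ by definition while $K'=K_1\cap K_2$, we get $\Delta_{K_1}\cap\Delta_{K_2}=(\Delta_G)|_{K_1\cap K_2}=(\Delta_G)|_{K'}$. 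Since $\Delta_{K'}$ was defined to be $\Delta_{K_1}\cap\Delta_{K_2}$, this is exactly (1). (The middle term $\Delta_{K_1\cap K_1}$ in the statement is a typo for $\Delta_{K_1\cap K_2}$.)

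For assertion (2), the inclusion $\Delta_{K_1}\cup\Delta_{K_2}\subseteq\Delta_G$ is immediate, as each $\Delta_{K_i}$ is an induced subcomplex of $\Delta_G$. For the reverse inclusion I would take a face $\sigma\in\Delta_G$; since $\Delta_G=\Gc$ is the clique complex of $G^c$, $\sigma$ is a clique of $G^c$. Split $\sigma=\sigma_K\sqcup\sigma_W$ with $\sigma_K=\sigma\cap K$ and $\sigma_W=\sigma\cap W$. The crucial point is that $\sigma_W$ is a clique of the \emph{induced} subgraph of $G^c$ on $W$, which is precisely the cycle $c$; since $c$ has length $a_1+3\geq 4$ it is triangle-free, so $\sigma_W$ is empty, a single vertex, or an edge of $c$. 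Reading the vertices of $c$ cyclically as $v,v_1,\dots,v_k,w,w_1,\dots,w_{a_1-k+1}$, the sets $V_1=\{v,v_1,\dots,v_k,w\}$ and $V_2=\{w,w_1,\dots,w_{a_1-k+1},v\}$ are exactly the two arcs of $c$ between $v$ and $w$, and every edge of $c$ joins two cyclically consecutive vertices and hence lies inside one of these two arcs. Therefore $\sigma_W\subseteq V_1$ or $\sigma_W\subseteq V_2$; since $\sigma_K\subseteq K\subseteq K_1\cap K_2$ and $V_i\subseteq K_i$, we conclude $\sigma\subseteq K_1$ or $\sigma\subseteq K_2$, and as $\sigma\in\Delta_G$ this places $\sigma$ in $\Delta_{K_1}=(\Delta_G)|_{K_1}$ or $\Delta_{K_2}=(\Delta_G)|_{K_2}$. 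Hence $\Delta_G=\Delta_{K_1}\cup\Delta_{K_2}$.

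I do not expect a genuine obstacle here; the lemma is essentially bookkeeping. The only two ingredients that really matter are structural facts already set up before the statement: that $c$ is an \emph{induced} cycle of $G^c$ (so its restriction to $W$ has no chords and contributes no $2$-faces to $\Delta_G|_W$), and that $v,w$ were chosen nonadjacent on $c$, so that $V_1$ and $V_2$ genuinely partition $W$ into two arcs meeting only in $\{v,w\}$, with $K_1\cup K_2=K\cup W=V$ and $K_1\cap K_2=K\cup\{v,w\}=K'$. Keeping these identifications straight is the whole of the work.
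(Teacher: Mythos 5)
Your proof is correct and follows essentially the same route as the paper: part (1) is the formal identity $\Delta|_A\cap\Delta|_B=\Delta|_{A\cap B}$, and part (2) rests on the observation that $c$ is an induced (chordless) cycle, so no face of $\Delta_G$ can straddle the interiors of the two arcs $V_1$ and $V_2$. Your version merely makes explicit the step the paper compresses into ``clique closure finishes the argument,'' by noting that $\sigma\cap W$ is a clique of the triangle-free induced cycle and hence at most an edge of one arc.
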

\begin{proof}  The first fact follows immediately from properties of induced subcomplexes, as for general simplicial complexes $\Delta$ with sets of vertices $S,T\subseteq V$, we have
$$\Delta|_{S\cap T}=\Delta|_S\cap \Delta|_T.$$
The second is not true for all simplicial complexes.  For complexes $\Delta$ of the form above, we wish to show that every $\sigma\in\Delta$ is in either the induced subcomplex $\Delta_{K_1}$ or $\Delta_{K_2}$.  It is sufficient to show this for edges of $\Delta$, as clique closure of $\Delta$ finishes the argument.  It is clear that any edges on vertices entirely contained in $K_1, or K_2$ are in $\Delta_{K_1}\cup \Delta_{K_2}$.  An edge running between the two (but not contained fully in either) would have to be of the form $\{v_i, w_j\}$ for $v_i\in V_1, w\in V_2$.  However, by construction of $c$ as a minimal cycle, no such chords exist.  The result follows.
\end{proof}
With this characterization in hand, we may take a Mayer-Vietoris sequence to finish the proof of Theorem \ref{maintheorem}.
\begin{equation*}
\cdots\rightarrow H_2(\Delta_{K_1})\oplus H_2(\Delta_{K_2})\rightarrow H_2(\Delta_G)\xrightarrow{\partial} H_1(\Delta_{K'})\rightarrow H_1(\Delta_{K_1})\oplus H_1(\Delta_{K_2})\rightarrow\cdots
\end{equation*}
As we had assumed G had no proper induced subgraphs $G'$ with $H_2(\Delta_{G'})\neq 0$, we have the leftmost term is zero.  So the map $\partial: H_2(\Delta_G)\rightarrow H_1(\Delta_{K'})$ is injective, and $\dim H_1(\Delta_{K'})\neq 0$.  This subset $K'$ is of size
$$|K|+2= |V|-|W|+2= (a_2+4)-(a_1+3)+2=a_2-a_1+3.$$
By assumption, $2a_1> a_2$, we have that $|K'|<2a_1-a_1+3=a_1+3$.  So $|K'|$ is a set of vertices strictly smaller than those of $c$ generating a nonzero first homology, which gives us our desired contradiction.
\end{proof}
\begin{remark}  These statements on Betti numbers are really statements about simplicial topology.  This theorem is equivalent to following statement.  Let $\Delta$ be any flag complex on $n$ vertices with $\dim H_2(\Delta)\neq 0$ and $\dim H_2(\Delta\backslash v)=0$ for any vertex $v\in\Delta$.  Given any ordering of the vertices of $\Delta$, $V=\{v_1,...,v_n\},$ we will let $V_i:=\{v_1,...,v_i\}$.  Consider the chain of inclusions of the induced simplicial complexes,
$$\emptyset=\Delta|_{V_0}\subset \Delta|_{V_1}\subset\Delta|_{V_2}\subset\cdots \subset\Delta|_{V_k}\subset\cdots \subset\Delta|_{V_n}.$$
Across all such chains, choose one such that $k$, the first index for which $\dim H_1(\Delta|_{V_k})\neq 0$, is minimal.  Then we have the total number of vertices in our complex must be at least 2k.
\end{remark}
\begin{example}  It is not the case that all minimal homology generators of $\homol{2}$ can be chosen to be spheres, or that given any generator $K$ of nonzero $\homol{1}$ that the vertices can be partitioned into hemispheres $K_1$ and $K_2$ in such a way that $K_1\cap K_2=K$ and $K_1\cap K_2=\Delta_G$.  The 1-skeleton of the following complex $\widehat{G^c}$ is an example of when this partitioning fails.
\begin{center}
\begin{tikzpicture}[scale=0.8, vertices/.style={circle, fill=black, inner sep=0pt}]
\node[vertices] (w1) at (1.5,1.3) {};
\node[vertices] (w2) at (1.5,-3.3) {};

\node[vertices] (x1) at (0,0) {};
\node[vertices] (x2) at (.8,-.5) {};
\node[vertices] (x3)  at (2.2,-.5) {};
\node[vertices] (x4)  at (3,0){};
\node[vertices] (x5)  at (1.4,.3){};

\node[vertices] (y1) at (0,0-1) {};
\node[vertices] (y2) at (.8,-.5-1) {};
\node[vertices] (y3)  at (2.2,-.5-1) {};
\node[vertices] (y4)  at (3,0-1){};
\node[vertices] (y5)  at (1.4,.3-1){};

\node[vertices] (z1) at (0,0-2) {};
\node[vertices] (z2) at (.8,-.5-2) {};
\node[vertices] (z3)  at (2.2,-.5-2) {};
\node[vertices] (z4)  at (3,0-2){};
\node[vertices] (z5)  at (1.4,.3-2){};

\foreach \from/\to in {x1/x2,x2/x3,x3/x4,x1/y1,x2/y2,x3/y3,x4/y4,x1/w1,x2/w1,x3/w1,x4/w1,z1/w2,z2/w2,z3/w2,z4/w2,y1/x2,y2/x3,y3/x4,z1/z2,z2/z3,z3/z4,y1/z1,y2/z2,y3/z3,y4/z4,z1/y2,z2/y3,z3/y4}
	\draw[-] (\from)--(\to);
\foreach \from/\to in {y1/y2,y2/y3,y3/y4}
	\draw[line width=1.5pt] (\from)--(\to);
\foreach \from/\to in {x1/x5,x4/x5,x5/y5,x5/w1,z5/w2,y4/x5,y5/x1,z1/z5,z4/z5,y5/z5,z4/y5,z5/y1}
	\draw[dotted] (\from)--(\to);
\foreach \from/\to in {y1/y5,y4/y5}
	\draw[dotted, line width=1.5pt] (\from)--(\to);
\foreach \from/\to in {w1/w2}
	\draw[black!80,dashed] (\from)--(\to);
\end{tikzpicture}
\end{center}
If the edges in bold are chosen as the minimal generator of first homology, there is no way of partitioning the remaining vertices into hemispheres such that the upper and lower hemisphere intersect in this cycle, and the union of the induced complexes on the hemispheres is the entire complex.  The edge running from the top vertex to the bottom vertex will not be contained in the induced subcomplexes.
\end{example}
In the case when the Stanley-Reisner complex is a higher dimensional triangulation of a sphere, however, this theorem can be extended.
\begin{theorem}\label{polytopes}  Let $\Delta$ be a convex, clique-closed simplicial n-polytope which is a topological (n-1)-dimensional sphere.  If all minimal induced subcomplexes such that $\dim\homol{i}(\Delta')\neq 0$ are of the same size, then for $I_G$ an edge ideal with $\Delta_G$ of this form, the relative jump sequence of $I_G$ satisfies the inequalities
$$1\leq r_1\leq r_2\leq\cdots\leq r_{n-1}.$$
\end{theorem}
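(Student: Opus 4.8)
The plan is to pass to reduced simplicial homology via Hochster's formula (Proposition~\ref{graphcliques}) and then establish a discrete convexity statement. For $0\le i\le n-1$ set
$$m_i=\min\{\,|W|:\ W\subseteq V(\Delta),\ \homol{i}(\Delta|_W;\kk)\ne 0\,\},$$
so $m_0=2$, and since $\betti{j}{j+i+1}\ne 0$ iff $\homol{i}(\Delta|_W)\ne0$ for some $W$ with $|W|=j+i+1$, we get $a_i=m_i-i-2$ and hence $r_i=m_i-m_{i-1}-1$ for all $i\ge 1$ (using $m_0=2$ when $i=1$). As $\Delta$ has dimension $n-1$ with $\homol{n-1}(\Delta)\ne0$ one has $\reg(I_G)=n+1$, so the jump sequence has length $n-1$ and $m_1,\dots,m_{n-1}$ are all finite. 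Under this dictionary the claimed chain $1\le r_1\le\cdots\le r_{n-1}$ is equivalent to $m_1\ge 4$ together with the convexity inequalities
$$m_{i-1}+m_{i+1}\ \ge\ 2m_i\qquad(1\le i\le n-2).$$
The first is immediate: a flag complex with $\homol{1}\ne0$ contains a chordless cycle, which has length $\ge 4$, so any witnessing $W$ has $|W|\ge 4$.

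For the convexity inequality at a fixed $i\in\{1,\dots,n-2\}$ I would run the Mayer--Vietoris ``hemisphere'' argument of Theorem~\ref{maintheorem} one dimension higher, but along a tower of \emph{face links} rather than arbitrary minimal subcomplexes. Pick a face $\sigma$ of $\Delta$ with $\dim\sigma=n-i-3$ (the empty face if $i=n-2$) and set $\Delta'=\operatorname{lk}_\Delta(\sigma)$: since $\Delta$ is a flag triangulated $(n-1)$-sphere, $\Delta'$ is a flag triangulated $(i+1)$-sphere and equals $\Delta|_{N(\sigma)}$. A proper full subcomplex of a triangulated $(i+1)$-sphere has vanishing top homology, so $\Delta'$ is inclusion-minimal among induced subcomplexes of $\Delta$ with $\homol{i+1}\ne0$; by the hypothesis $|V(\Delta')|=m_{i+1}$, and $\homol{i+1}$ vanishes on every proper induced subcomplex of $\Delta'$. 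Choosing $w\in V(\Delta')$, the link $\Sigma:=\operatorname{lk}_{\Delta'}(w)=\Delta|_{N_{\Delta'}(w)}$ is a flag triangulated $i$-sphere, again inclusion-minimal for $\homol{i}$, so $|V(\Sigma)|=m_i$; choosing $u\in V(\Sigma)$, $\tau:=\operatorname{lk}_\Sigma(u)=\Delta|_{N_\Sigma(u)}$ is a flag triangulated $(i-1)$-sphere, inclusion-minimal for $\homol{i-1}$, so $|V(\tau)|=m_{i-1}$. (This is where the uniform-size hypothesis is consumed: it identifies the vertex counts of these manifest spheres with the jump-sequence quantities $m_j$.)

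Since $\tau\cong S^{i-1}$ sits as a subcomplex of $\Sigma\cong S^i$, Alexander duality (equivalently, Jordan--Brouwer separation) gives $\homol{0}(\Sigma\setminus\tau)\ne 0$, so $\tau$ separates $\Sigma$; write $V(\Sigma)=V(\tau)\sqcup A\sqcup B$ with $A,B$ the vertex sets of the two sides. No edge of $\Delta$ joins $A$ to $B$ — such an edge would lie in $\Sigma$, as $\Sigma$ is a full subcomplex, and would connect the two components of $\Sigma\setminus\tau$ — and both $A,B$ are nonempty: a side with no interior vertex would be a single $i$-simplex, forcing $\tau=\partial\sigma^i$, which for $i\ge 2$ is not flag, while for $i=1$ it would make $\Sigma$ the union of two edges on the same two vertices, impossible in a simplicial complex. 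This is the exact analogue of ``two nonadjacent vertices cut a cycle into two arcs.'' Now put $K:=V(\Delta')\setminus V(\Sigma)$ (so $|K|=m_{i+1}-m_i$), $K_1:=K\cup V(\tau)\cup A$, $K_2:=K\cup V(\tau)\cup B$; then $K_1\cup K_2=V(\Delta')$, $K_1\cap K_2=K\cup V(\tau)$, and $K_1,K_2\subsetneq V(\Delta')$. As in Theorem~\ref{maintheorem}, $\Delta|_{K_1}\cap\Delta|_{K_2}=\Delta|_{K_1\cap K_2}$ always, and $\Delta'=\Delta|_{K_1}\cup\Delta|_{K_2}$ since $\Delta$ is flag and the only candidate ``straddling'' edges run between $A$ and $B$. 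The Mayer--Vietoris sequence
$$\homol{i+1}(\Delta|_{K_1})\oplus\homol{i+1}(\Delta|_{K_2})\longrightarrow\homol{i+1}(\Delta')\longrightarrow\homol{i}(\Delta|_{K\cup V(\tau)})$$
has vanishing left-hand term by minimality of $\Delta'$, so $\homol{i}(\Delta|_{K\cup V(\tau)})\ne 0$, whence $|K\cup V(\tau)|=(m_{i+1}-m_i)+m_{i-1}\ge m_i$, i.e.\ $m_{i-1}+m_{i+1}\ge 2m_i$. Carrying this out for every $i$ finishes the proof.

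The step I expect to require the most care is the choice of cutting tower: one must simultaneously guarantee that each complex being cut is genuinely a triangulated sphere — so that the separation of $\Sigma$ by $\tau$ and the vanishing of $\homol{i+1}$ off $\Delta'$ are legitimate — and that its vertex count is forced to be $m_j$. Routing through iterated face links is what makes both hold at once, and it is here that the hypotheses ``$\Delta$ is a convex (hence well-behaved, shellable) triangulated sphere'' and ``all minimal induced witnesses of $\homol{i}$ share one size'' are both used. The secondary technical point is the nonemptiness of $A$ and $B$ (equivalently, that $K_1,K_2$ are proper), which is the higher-dimensional incarnation of the ``cycle has length at least four'' observation in Theorem~\ref{maintheorem}.
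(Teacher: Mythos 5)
Your proof is correct, and at its core it is the same argument as the paper's: both reduce the chain $1\le r_1\le\cdots\le r_{n-1}$ to the convexity inequalities $2m_i\le m_{i-1}+m_{i+1}$ for the minimal witness sizes, and both obtain each such inequality from a Mayer--Vietoris sequence attached to a hemisphere decomposition of a sphere along an equatorial sphere one dimension down, using inclusion-minimality to kill the left-hand term. The substantive difference is how the nested spheres are produced, and your version is genuinely tighter there. The paper selects nested minimal witnesses $V_2\subset V_1\subset V$ for $\homol{n-3}$, $\homol{n-2}$, $\homol{n-1}$ and asserts that $\Delta|_{V_2}$ can be ``viewed as an equator'' of $\Delta|_{V_1}$ --- even though the example immediately preceding the theorem shows that arbitrary minimal homology witnesses in a flag complex need not admit such a partition, and no justification is given for why these particular ones do. Routing through iterated face links supplies the required structure honestly: links of faces in a simplicial polytope boundary are again polytopal spheres and, by flagness, are induced subcomplexes; inclusion-minimality follows from the vanishing of top homology on proper subcomplexes of a closed pseudomanifold; the uniform-size hypothesis converts inclusion-minimality into the identification of each link's vertex count with $m_j$; and Alexander duality furnishes the separation of $\Sigma$ by $\tau$ that the paper leaves implicit. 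Your bookkeeping also lands on the correct direction $r_i\le r_{i+1}$: the paper's displayed conclusion $r_{n-1}\le r_{n-2}$ is an index slip, since its own inequality $|V_1|\le|K'|$ unwinds to $2m_{n-2}\le m_{n-1}+m_{n-3}$, which yields the increasing chain consistent with the icosahedron's relative jump sequence $[3;2,6]$. Two small points to tighten: the nonemptiness of $A$ and $B$ follows directly from fullness of $\tau$ in $\Sigma$ (every point of $|\Sigma|\setminus|\tau|$ connects within its carrier face to a vertex outside $V(\tau)$), so the case analysis on $\partial\sigma^i$ is unnecessary; and you should state explicitly that face links are spheres because $\Delta$ is polytopal, not merely a triangulated sphere, since links in arbitrary triangulations of spheres need not be spheres.
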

\begin{proof}  By construction, we have that $\Delta_G$ is a minimal generator of $H_{n-1}$, on some vertex set V.  Select a set of vertices $V_1$ of minimal size on which $\dim H_2(\Delta_G|_{V_1})\neq 0$, and a minimal set of vertices $V_2\subset V_1$ with $\dim H_{n-3}(\Delta_G|_{V_2})\neq 0$.  We view $\Delta_G|_{V_2}$ as an equator of $\Delta_G|_{V_1}$, and partition the vertices of $V_1$ into the upper hemisphere with boundary and the lower hemisphere with boundary, $V_{1,u}$ and $V_{1,l}$, which by construction gives $V_{1,u}\cap V_{1,l}=V_2$.\\
\\
As before, we form vertex sets $K=V\backslash V_1$, $K_u=K\cup V_{1,u}$, $K_l=K\cup V_{1,l}$, and $K'=K\cup V_2=K_u\cap K_l$.  Using the fact that $\Delta$ is a convex simplicial polytope, we have $\Delta=\left(K_u\right)\cup\left(K_l\right)$ and as before, $\Delta|_{K'}=\Delta|_{K_u}\cap \Delta|_{K_l}.$
The relevant part of the Mayer-Vietoris sequence is
\begin{equation*}
\cdots\rightarrow\homol{n-2}(\Delta_{K_u})\oplus \homol{n-2}(\Delta_{K_l})\rightarrow \homol{n-2}(\Delta_G)\xrightarrow{\partial} \homol{n-3}(\Delta_{K'})\rightarrow \cdots
\end{equation*}
which again gives us that $\homol{n-3}(\Delta|_{K'})\neq 0$.  Note that the number of vertices involved in the assumed minimal generator of $\homol{n-2}$, $V_2$, is $a_{n-2}+n=r_1+r_2+\cdots+r_{n-2}+n$, and the number of vertices involved in $\Delta_G$ and $V_1$ are respectively $a_n+(n+2)=a_{n-2}+r_{n-1}+r_{n-2}+(n+2)$ and $a_{n-1}+(n+1)=a_{n-2}+r_{n-1}+(n+1).$ To summarize,
\begin{align*}
|V|&=a_{n-2}+r_{n-1}+r_{n-2}+(n+2),\\
|V_1|&=a_{n-2}+r_{n-1}+(n+1),\\
|K|&=|V|-|V_1|=r_{n-2}+1,\\
|V_2|&=a_{n-2}+n,\;\;\text{and}\\
|K'|&=|K|+|V_2|=a_{n-2}+r_{n-2}+(n+1).
\end{align*}
Noting that $\homol{n-3}(\Delta|_{K'})\neq 0$, we combine this with the fact that $V_1$ is a set with supposed minimal size to obtain
$$|V_1|=a_{n-2}+r_{n-1}+(n+1)\leq a_{n-2}+r_{n-2}+(n+1),$$
giving us that $r_{n-1}\leq r_{n-2}$.\\
\\
It should be noted that the hypothesis that all minimal subsets are of the same size is a nontrivial assumption -- it is not necessarily the case that $V_2$ is of minimal size among all such subsets, and without that the proof fails.\\
\\
To complete the chain of inequalities, we iteratively repeat this argument for a minimal induced subset with nonzero $\homol{j-2}$ to calculate the remaining $r_j$, $j<n-2$.
\end{proof}




The bound obtained in Theorem \ref{maintheorem} can be improved directly for low $a_1$ via enumerative arguments.  In these cases, we conjecture that $a_1=2$, $a_2\geq 8$.  This bound, if true, is sharp by Example \ref{balloon}.  However, we have that if $a_1=2$, then $a_2\geq 7$.  The proof, to appear in \cite{Wh11}, is technical but we include a narrower result for a demonstration of these techniques.
\begin{theorem}\label{a2geq5}  Let $G$ be a graph such that $I_G$ has jump sequence $[k;2,a_2,...,a_n]$.  Then $a_2\geq 5$.
\end{theorem}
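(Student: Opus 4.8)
The plan is a proof by contradiction. Theorem~\ref{maintheorem} applied with $a_1 = 2$ already gives $a_2 \geq 4$, so it suffices to rule out $a_2 = 4$; since $a_2$ is part of the sequence we have $k \geq 3$ and Lemma~\ref{reductionlemma} applies. \textbf{Step 1 (reduce to a minimal $8$-vertex complex).} Suppose $a_2 = 4$. By the Subgraph Reduction Lemma there is an induced subgraph $H$ on exactly $a_2 + 4 = 8$ vertices with $I_H$ of jump sequence $[3; a_1', 4]$, $a_1' \geq 2$, and with $\widetilde{H}_2((\Delta_H)|_{W'}) = 0$ for every proper $W' \subsetneq V(H)$. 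Applying Theorem~\ref{maintheorem} to $I_H$ gives $2a_1' \leq 4$, so $a_1' = 2$. Set $\Gamma = H^c$ and $\Delta = \widehat{\Gamma} = \Delta_H$. Then $\Delta$ is a flag complex on $8$ vertices, $\Gamma$ has no induced $4$-cycle, $\widetilde{H}_2(\Delta) \neq 0$, $\widetilde{H}_2$ vanishes on every induced subcomplex on $\leq 7$ vertices, and (because $a_1' = 2$) $\Gamma$ contains a shortest induced cycle $c$, necessarily of length $5$; write $W = V(c)$, so $|W| = 5$, and $K = V(\Delta) \setminus W$, so $|K| = 3$.

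\textbf{Step 2 (produce a second pentagon and rigidify the picture).} Run the Mayer--Vietoris argument of Theorem~\ref{maintheorem}: choose two vertices $v, w$ of $c$ non-adjacent in $\Gamma$, split $c$ at $\{v,w\}$ into its two arcs, and decompose $\Delta = \Delta|_{K_1} \cup \Delta|_{K_2}$ with $\Delta|_{K_1 \cap K_2} = \Delta|_{K'}$, $K' = K \cup \{v,w\}$, $|K'| = 5$. Both $\Delta|_{K_i}$ have $\leq 7$ vertices, so $\widetilde{H}_2(\Delta|_{K_i}) = 0$ and the connecting homomorphism $\widetilde{H}_2(\Delta) \hookrightarrow \widetilde{H}_1(\Delta|_{K'})$ is injective; hence $\widetilde{H}_1(\Delta|_{K'}) \neq 0$, and since $\Gamma$ is induced-$4$-cycle-free the only possibility on $5$ vertices is that $\Delta|_{K'}$ is an induced pentagon $c'$. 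Because $K \cap W = \emptyset$ we get $K' \cap W = \{v,w\}$ with $v \not\sim w$, so $c$ and $c'$ meet in exactly two vertices, non-consecutive in both. Splitting $c'$ at $\{v,w\}$ and repeating the decomposition yields $\Delta = \Delta|_A \cup \Delta|_B$ with $A = W \cup \{p\}$ for one vertex $p \in K$, $B = W \cup (K \setminus \{p\})$, and $A \cap B = W$ (no chords obstruct the union since $c'$ is induced); minimality again annihilates $\widetilde{H}_2(\Delta|_A)$ and $\widetilde{H}_2(\Delta|_B)$, forcing the connecting map $\widetilde{H}_2(\Delta) \to \widetilde{H}_1(\Delta|_W) = \widetilde{H}_1(c) \cong \kk$ to be an isomorphism. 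Thus $\dim_{\kk} \widetilde{H}_2(\Delta) = 1$ and, by exactness at $\widetilde{H}_1(\Delta|_W)$, the fundamental $1$-cycle of $c$ bounds both in $\Delta|_{W \cup \{p\}}$ and in $\Delta|_{W \cup (K \setminus \{p\})}$.

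\textbf{Step 3 (close by enumeration).} A pentagon can bound in $\widehat{\Gamma|_{W \cup \{x\}}}$ only if the single extra vertex $x$ is adjacent to \emph{all five} vertices of $W$ (otherwise some edge of $c$ lies in no available triangle, so cannot appear in any boundary). Applying this to $\Delta|_{W \cup \{p\}}$ forces $p$ adjacent to all of $W$; applying the analogous two-vertex version to $\Delta|_{W \cup (K \setminus \{p\})}$ forces the two remaining vertices of $K$ to be adjacent to each other (the alternative would make each of them adjacent to all of $W$, conflicting with $c'$ being induced). Combining these adjacencies with the edges and non-edges read off from the two pentagons leaves only a handful of undetermined adjacencies --- those between $K$ and the three vertices of $W$ not shared with $c'$ --- and one checks each remaining pattern: in every case either a chord produces an induced $4$-cycle in $\Gamma$ (contradicting $a_1' = 2$), or the triangles available on the $8$ vertices are too few to support a $2$-cycle nontrivial on all of them (contradicting $\widetilde{H}_2(\Delta) \neq 0$), or some $7$-vertex induced subcomplex already carries nonzero $\widetilde{H}_2$ (contradicting minimality). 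Any of these contradictions gives $a_2 \geq 5$. I expect this last step --- the adjacency enumeration, keeping the induced-$4$-cycle-free condition and the minimality condition simultaneously in force --- to be the main obstacle; Steps~1 and~2 are essentially the already-established Reduction Lemma and Mayer--Vietoris machinery specialized to eight vertices.
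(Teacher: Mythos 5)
Your Steps 1 and 2 are correct, and they take a genuinely different route from the paper. The paper, after the same reduction to a minimal $8$-vertex complex, proceeds by classifying vertex degrees in $G^c$: it rules out degrees $6$ and $7$ (cone/contractibility arguments), proves a ``wheel'' lemma (every vertex link contains an induced cycle of length $\geq 4$, via Mayer--Vietoris on the star/deletion of a vertex), rules out degrees $1$--$4$, concludes $G^c$ is $5$-regular, and then does a case analysis on the two vertices outside a $5$-wheel. You instead run the Mayer--Vietoris decomposition of Theorem \ref{maintheorem} twice: once to produce a second induced pentagon $c'$ meeting $c$ exactly in the non-adjacent pair $\{v,w\}$, and once more (splitting $c'$) to force the connecting map onto $\widetilde{H}_1(c)\cong\kk$ to be an isomorphism, so that $c$ bounds in both $\Delta|_{W\cup\{p\}}$ and $\Delta|_{W\cup\{q,r\}}$. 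Your ``cone point'' deduction --- that $c$ bounding in $\Delta|_{W\cup\{p\}}$ forces $p$ adjacent to all of $W$, since every edge of the chordless pentagon must lie in a triangle with apex $p$ --- is sound and is a cleaner way to reach the $5$-wheel configuration than the paper's degree bookkeeping.

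The genuine gap is Step 3: you describe the shape of the remaining enumeration and assert that every completion fails, but you do not carry it out, and you yourself flag it as ``the main obstacle.'' Since this is where the theorem's content lives, the proof is not complete as written. That said, the structure you established in Step 2 closes the argument almost immediately, so the gap is fillable rather than fatal. Write $c = v\text{--}s\text{--}w\text{--}t\text{--}u\text{--}v$ and $K=\{p,q,r\}$ with $c' = v\text{--}p\text{--}w\text{--}r\text{--}q\text{--}v$, so that $p\not\sim q$, $q\not\sim w$, $r\not\sim v$ in $G^c$. Since $c$ bounds in $\Delta|_{W\cup\{q,r\}}$ and $c$ is chordless, every edge of $c$ must lie in a triangle whose apex is $q$ or $r$; the two edges $vs$ and $vu$ at $v$ cannot use $r$ (as $r\not\sim v$), so $q\sim s$ and $q\sim u$. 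But $p$ is adjacent to all of $W$, $p\not\sim q$, and $s\not\sim u$ (they are the two neighbors of $v$ on the pentagon), so $\{p,s,q,u\}$ is an induced $4$-cycle in $G^c$ --- contradicting $C_4$-freeness. Adding this paragraph would complete your proof.
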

\begin{remark}  We note that in any graph which did satisfy $a_1=2$ and $a_2=4$, there would exist some (minimal) induced subcomplex of $\Gc$ on vertex set of size 8, $W\subseteq V$, such that $\dim\widetilde{H}_2(\Gc|_W)\neq 0$.  We first characterize properties such a complex would satisfy, then return to the main proof of Theorem \ref{a2geq5}.
\end{remark}
First, we show that $G^c$ contains no vertices of $\deg(v)=6,7$ when $G$ as above with $\dim\widetilde{H}_2(\Delta_G)\neq 0$.
\begin{lemma}\label{sixseven}  Let $G$ be a graph with no induced 4-cycles in the complement graph $G^c$, i.e. $G^c$ is $C_4$-free.  Then there exist no subsets of vertices $W\subseteq V$, $|W|=8$, such that $\dim\widetilde{H}_2(\Delta_G|W,k)\neq 0$ and with any $v\in W$ of $\deg(v)=6,7$.
\end{lemma}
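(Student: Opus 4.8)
The plan is to argue by contradiction, exploiting the flag structure of $\Delta_G|_W$ together with the $C_4$-freeness of $G^c$. Write $\Delta := \Delta_G|_W = \widehat{(G^c)|_W}$, a flag complex on the eight vertices of $W$ whose $1$-skeleton $H := (G^c)|_W$ is $C_4$-free, and suppose toward a contradiction that $\widetilde H_2(\Delta)\neq 0$ while some $v\in W$ has degree $6$ or $7$ in $H$. First I would dispatch the easy case: if $\deg_H(v)=7$, then $v$ is adjacent in $H$ to all seven other vertices, so (since $\Delta$ is flag) $\sigma\cup\{v\}$ is a clique of $H$ whenever $\sigma$ is, i.e. $v$ is a cone vertex of $\Delta$; then $\Delta$ is contractible and $\widetilde H_2(\Delta)=0$, a contradiction. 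So from now on assume $\deg_H(v)=6$, and let $u$ be the unique vertex of $W$ not adjacent to $v$ in $H$ (equivalently, $\{u,v\}\in E(G)$).

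The one step with real content is to show that the neighborhood $N_H(u)$ spans a clique in $H$. Since $u\not\sim_H v$ we have $N_H(u)\subseteq W\setminus\{u,v\}=N_H(v)$, so every vertex of $N_H(u)$ is adjacent in $H$ both to $u$ and to $v$. If two distinct vertices $a,c\in N_H(u)$ were non-adjacent in $H$, then on the four vertices $\{a,v,c,u\}$ exactly the edges $av$, $vc$, $cu$, $ua$ would be present and the two diagonals $ac$ and $uv$ would be absent; that is an induced $4$-cycle in $H$, contradicting $C_4$-freeness. Hence $N_H(u)$ is a clique, so the link $\mathrm{lk}_\Delta(u)=\Delta|_{N_H(u)}$ is a full simplex, in particular contractible.

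I would then finish with the Mayer--Vietoris (equivalently, elementary collapse) device already used in this section. Write $\Delta=\bigl(\Delta|_{W\setminus u}\bigr)\cup\mathrm{star}_\Delta(u)$: a face of $\Delta$ either avoids $u$, hence lies in $\Delta|_{W\setminus u}$, or contains $u$, hence lies in $\mathrm{star}_\Delta(u)$; moreover the two pieces meet exactly in $\mathrm{lk}_\Delta(u)$ (this is where flagness is used). Since $\mathrm{star}_\Delta(u)$ is a cone and $\mathrm{lk}_\Delta(u)$ is contractible by the previous paragraph, the Mayer--Vietoris sequence gives $\widetilde H_n(\Delta)\cong\widetilde H_n(\Delta|_{W\setminus u})$ for all $n$. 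But $v$ is adjacent in $H$ to every vertex of $W\setminus\{u,v\}$, so $v$ is a cone vertex of $\Delta|_{W\setminus u}$ and the latter is contractible. Therefore $\widetilde H_2(\Delta)\cong\widetilde H_2(\Delta|_{W\setminus u})=0$, contradicting $\widetilde H_2(\Delta)\neq 0$, and the lemma follows.

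The main (in fact only) obstacle is the clique-ness of $N_H(u)$; everything else is the routine bookkeeping that a cone vertex makes a complex contractible and a vertex with contractible link is homologically irrelevant. When writing the details, two points deserve a sanity check: that "$\deg(v)$" is read as the degree of $v$ inside the induced subcomplex $\Delta_G|_W$ (so that "cone vertex" is the correct notion), and that $|W|=8$ is inessential to the argument — the same reasoning shows that if $G^c$ is $C_4$-free and $\widetilde H_2(\Delta_G|_W)\neq 0$, then no vertex of $(G^c)|_W$ can have degree $\geq |W|-2$.
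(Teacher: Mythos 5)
Your proof is correct and follows essentially the same route as the paper: the heart of both arguments is that $C_4$-freeness forces the neighborhood of the unique non-neighbor $u$ of the degree-$6$ vertex $v$ to be a clique (via the would-be induced $4$-cycle on $\{a,v,c,u\}$), after which everything in sight is contractible. Your finish --- deleting $u$ because its link is a full simplex and then observing that $v$ cones off $\Delta|_{W\setminus u}$ --- is a slightly cleaner, uniform packaging of the paper's assertion that the complex is the clique closure of $\Susp(K_{r},\{v,w\})\cup\Cone(\widehat{G^c\setminus W})$, and it absorbs the paper's separately treated $\deg(w)=1$ case.
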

\begin{proof}[Proof of Lemma \ref{sixseven}]
Assume that $W$ is any subset of $G$ with $|W|=8$ and
$$\dim\homol{2}(\Delta_G|_W)\neq 0.$$  Without loss of generality, we may assume that $G$ was a graph on vertex set W.  Choose the vertex of highest degree in the $G^c$.  If $\deg(v)=7$, then the vertex set of $G^c$ is equal to $\{v\}\cup n(v)$.  So $G^c$ is a cone over $(G^c)\backslash v$, and hence, is contractible and has no homology.  So $\deg(v)<7$ for all $v\in G$.\\
\\
If the highest degree vertex $v\in G^c$ is of degree six, then all but one of the vertices are adjacent to $v$.  We divide the proof up into cases by degree of this vertex $w\notin n(v)$.\\
\\
If $\deg(w)=1$, then we can remove $w$ to obtain a complex with the same second homology,
$$\dim\widetilde{H}_2(\Gc)=\dim\widetilde{H}_2(\Gc\backslash w).$$
However, as $|G^c\backslash w|=7$, this contradicts Theorem \ref{maintheorem}.\\
\\
Let $\deg(w)=r$ and $W=\{v_1,...,v_r\}=n(w)\subseteq n(v)$.  For each pair $v_i,v_j\in n(w)$, we must have $\{v_i,v_j\}\in G^c$.  Otherwise, we have an induced 4-cycle on $\{v,v_i,w,v_j\}$.\\
\\
So $\Gc$ is the clique closure of $\Susp(K_{r},\{v,w\})\cup\Cone(\widehat{G^c \backslash W})$, or the clique closure of the union (with appropriate identifications) of the suspension of the complete graph on $r$ vertices and a cone with $v$ over the remaining vertices $V\backslash W$.  This complex is contractible, and so again, we obtain a contradiction.  So no vertices of degrees 6 or 7 can be in the 1-skeleton of such a minimal $\widetilde{H}_2$ generator.
\end{proof}
\begin{lemma}\label{wheels}  Let $G$ be a graph and $\Delta=\Delta_G$ its Stanley-Reisner complex.  Let $\dhomol{2}(\Delta_G)\neq 0$ and all induced subcomplexes $\Delta'=\Delta_G|_W$ satisfy $\dhomol{2}(\Delta')=0$.  For each vertex $v\in G$, we have that some subset of the neighbors of v is an induced cycle of length $\geq 4$, i.e. some $S\subseteq n(v)$ with $\Delta|_S\cong C_{|S|}$.
\end{lemma}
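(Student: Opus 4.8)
The plan is to translate the statement into one about the \emph{link} of $v$ in $\Delta_G$ and then apply the fact that clique complexes of chordal graphs carry no reduced homology in positive degrees. Throughout let $n(v)$ be the set of vertices adjacent to $v$ in the $1$-skeleton $G^c$ of $\Delta_G$; since $\Delta_G=\widehat{G^c}$ is a flag complex, the link of $v$ is exactly the induced subcomplex on $n(v)$,
$$\mathrm{lk}_{\Delta_G}(v)=\Delta_G|_{n(v)}=\widehat{(G^c)|_{n(v)}}.$$
I read the hypothesis "$\dhomol 2(\Delta_G|_W)=0$ for all induced subcomplexes" as ranging over \emph{proper} $W\subsetneq V$, i.e.\ $\Delta_G$ is minimal under vertex deletion with $\homol 2(\Delta_G)\neq 0$; note that every $v\in V$ is a genuine vertex of $\Delta_G$, so $\mathrm{del}(v)=\Delta_G|_{V\setminus v}$ really is a proper induced subcomplex.

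The first step is to show that minimality forces $\homol 1(\mathrm{lk}(v))\neq 0$ for every vertex $v$. Write $\Delta_G=\mathrm{del}(v)\cup\mathrm{star}(v)$ with intersection $\mathrm{lk}(v)$, and use that $\mathrm{star}(v)$ is a cone on $v$, hence contractible. The Mayer--Vietoris sequence then reads
\begin{equation*}
\homol 2(\mathrm{del}(v))\longrightarrow\homol 2(\Delta_G)\xrightarrow{\ \partial\ }\homol 1(\mathrm{lk}(v))\longrightarrow\homol 1(\mathrm{del}(v)).
\end{equation*}
By minimality $\homol 2(\mathrm{del}(v))=0$, so $\partial$ is injective; since $\homol 2(\Delta_G)\neq 0$ this yields $\homol 1\bigl(\widehat{(G^c)|_{n(v)}}\bigr)\neq 0$.

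The second step is to conclude from this that $(G^c)|_{n(v)}$ has an induced cycle of length $\geq 4$. A graph with no induced cycle of length $\geq 4$ is chordal, and the clique complex of a chordal graph $H$ has $\homol i(\widehat H)=0$ for $i\geq 1$: working in each connected component and repeatedly deleting a simplicial vertex (which has a clique neighbourhood and is therefore dominated in the clique complex, so its deletion is a strong deformation retraction onto $\widehat{H\setminus u}$), each component of $\widehat H$ collapses to a point. Contrapositively, $\homol 1\bigl(\widehat{(G^c)|_{n(v)}}\bigr)\neq 0$ forces an induced cycle $C$ of length $\geq 4$ in $(G^c)|_{n(v)}$; setting $S=V(C)\subseteq n(v)$ and using flagness once more, $\Delta_G|_S=\widehat{(G^c)|_S}=C_{|S|}$, which is exactly the assertion.

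I do not expect a serious obstacle here: once the link formula and the chordal-complex fact are in place the argument is bookkeeping. The two points that need care are the reading of the minimality hypothesis (and the check that $\mathrm{del}(v)$ is genuinely a proper induced subcomplex, which holds), and the fact that the link $\widehat{(G^c)|_{n(v)}}$ may be \emph{disconnected} — this is why I state the chordal input as "$\homol i=0$ for $i\geq 1$" (valid for disconnected chordal graphs) rather than as full contractibility. If one prefers not to cite collapsibility of chordal clique complexes, the only thing actually used, namely $\homol 1(\widehat H)=0$ for $H$ chordal, can instead be proved directly by filling in any $1$-cycle of $\widehat H$ with triangles obtained from the chords.
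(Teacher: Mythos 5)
Your argument is correct and is essentially the paper's own proof: the same decomposition of $\Delta_G$ into the deletion of $v$ and the (contractible) induced subcomplex on $\{v\}\cup n(v)$, the same Mayer--Vietoris sequence using minimality to kill the left-hand term, and the same conclusion that $\homol{1}(\Delta_G|_{n(v)})\neq 0$ forces an induced cycle of length $\geq 4$. Your justification of that last implication via chordality is a slightly more careful version of a step the paper simply asserts, but it is not a different route.
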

\begin{proof}  Let $W=\{v\}\cup n(v)$ and $X=\Delta\backslash\{v\}$.  We note that $W\cap X=n(v)$, the neighbors of $v$.  As $\Delta|X\cup\Delta|W=\Delta$ and $\Delta|_W\cap\Delta|_X=\Delta|_{n(V)}$, we have another Meyer-Vietoris sequence,
\begin{equation*}
\cdots\rightarrow \homol{2}(\Delta|_W)\oplus \homol{2}(\Delta|_X)\rightarrow \homol{2}(\Delta)\xrightarrow{\partial} \homol{1}(\Delta|_{n(v)})\rightarrow \homol{1}(\Delta|_W)\oplus \homol{1}(\Delta|_X)\rightarrow\cdots.
\end{equation*}
For this, as the leftmost term is again zero, we have that $\homol{1}(\Delta|_{n(v)})\neq 0$ by injectivity of $\partial$.  So there must be some subset $S\subseteq n(v)$ for which we have an induced cycle generating nonzero first homology.  It must be the case that $n\geq 4$, as $\Delta_G$ is clique closed and hence a cycle of length 3 is filled in with a 2-simplex.
\end{proof}
\begin{lemma}\label{34vert}  Let $G$ be a graph on $8$ vertices with $C_4$-free complement, with
$$\dim\homol{2}(\Gc)\neq 0.$$
Then $G^c$ has no vertices of degrees 1,2, 3 or 4.
\end{lemma}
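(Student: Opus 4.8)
The plan is to deduce this almost immediately from Lemma \ref{wheels}, once the minimality hypothesis of that lemma has been checked, and then to finish with a trivial counting argument. So the first step is to extract the relevant jump‑sequence data from the hypotheses. Since $G^c$ is $C_4$‑free we have $\betti{2}{4}(I_G)=0$ by the corollary characterizing $\Ind{G}=1$, and (as $\betti{1}{3}$ always vanishes on $3$ vertices) this forces $a_1\geq 2$; by Theorem \ref{maintheorem} we then get $a_2\geq 2a_1\geq 4$. On the other hand $\dhomol{2}(\Gc)\neq 0$ on $|V|=8$ vertices gives $\betti{5}{8}(I_G)\neq 0$ by Theorem \ref{hochsters}, so $a_2\leq 4$; hence $a_2=4$ exactly. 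Unwinding the definition of $a_2$, this says $\betti{i}{i+3}(I_G)=0$ for all $i\leq 4$, i.e.\ \emph{every} proper induced subcomplex $\Gc|_W$ (necessarily on at most $7$ vertices) satisfies $\dhomol{2}(\Gc|_W)=0$ --- exactly the "no $\homol{2}$ on $\le 7$ vertices" fact already exploited in Lemma \ref{sixseven}.

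With that in hand, the hypotheses of Lemma \ref{wheels} are satisfied: $\dhomol{2}(\Gc)\neq 0$ while all proper induced subcomplexes have vanishing second homology. Thus for each vertex $v$ there is a subset $S\subseteq n(v)$ of its $G^c$‑neighbours with $\Gc|_S\cong C_{|S|}$ and $|S|\geq 4$. Now suppose, for contradiction, that $\deg_{G^c}(v)\in\{1,2,3,4\}$, so $|n(v)|\leq 4$; then $|S|=4$, so $S$ spans an induced $4$‑cycle in the flag complex $\Gc$, and being induced this $4$‑cycle has no chord and hence spans no filled triangle, so it is an induced $C_4$ in $G^c$. This contradicts $C_4$‑freeness, so every vertex of $G^c$ has degree $\geq 5$; in particular none has degree $1$, $2$, $3$, or $4$ (the same argument also excludes degree $0$).

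The substantive content here is front‑loaded into Lemma \ref{wheels} (the Mayer--Vietoris argument comparing $\{v\}\cup n(v)$ with $\Gc\setminus v$, whose leftmost term is killed by minimality) and into Theorem \ref{maintheorem}; given those, the present lemma is just the remark that a set of at most $4$ vertices is too small to carry an induced cycle of length $\geq 5$. The one step demanding care is the verification of the minimality hypothesis of Lemma \ref{wheels}, i.e.\ ruling out an $\homol{2}$‑carrying induced subcomplex on $6$ or $7$ vertices; this is precisely where $C_4$‑freeness (via $a_1\geq 2$, hence $a_2\geq 4$ by Theorem \ref{maintheorem}) is indispensable. As a safety check on the low‑degree cases one can also dispatch $\deg(v)=1$ and $\deg(v)=2$ by hand --- a degree‑$1$ vertex is a whisker as in Lemma \ref{sixseven}, and a degree‑$2$ vertex whose two neighbours are $G^c$‑adjacent has contractible link, so in both cases deleting $v$ preserves homology and produces an $\homol{2}$‑carrier on $7$ vertices --- but routing all four degrees through Lemma \ref{wheels} is cleaner and uniform.
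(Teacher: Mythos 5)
Your proof is correct and follows essentially the same route as the paper's: both reduce the lemma to Lemma \ref{wheels}, observing that a vertex of degree at most $3$ has too few neighbours to carry an induced cycle of length at least $4$, while a degree-$4$ vertex would force $\Gc|_{n(v)}\cong C_4$, contradicting $C_4$-freeness. The only difference is that you explicitly verify the minimality hypothesis of Lemma \ref{wheels} via Theorem \ref{maintheorem}, a step the paper leaves implicit (it is absorbed into the reduction at the start of the proof of Theorem \ref{a2geq5}).
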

\begin{proof}[Proof of Lemma \ref{34vert}.]  We consider the first three cases together, and the last case separately:\\
\emph{\bf Case 1:}  \emph{(G contains a vertex of degree 1,2, or 3.)}  Let $v$ be a vertex of degree 1,2, or 3.  This directly contradicts Lemma \ref{wheels}, as there can be no induced cycle of length greater than or equal to 4 in the induced graph on the set of neighbors of $v$.\\
\\
\emph{\bf Case 2:}  \emph{(G contains a vertex of degree 4.)}  Let $v$ be a vertex of degree 4.  By Lemma \ref{wheels}, the only possibility is that $\Delta_G|_{n(v)}\cong C_4$.  However, this violates $C_4$-freeness of $G^c$.
\end{proof}
From Lemmas \ref{sixseven} and \ref{34vert}, we know that all graphs $G$ on eight vertices with a $C_4$ free complement cannot have any vertices of degrees 1-4, or degrees 6,7 in $G^c$.  Hence, $G^c$ must be a 5-regular graph.
\begin{proof}[Proof of Theorem \ref{a2geq5}]
By Lemma \ref{reductionlemma}, is sufficient to restrict to the case where $\reg(I_G)=4$ and G is taken to be a graph on vertex set $W$.  As noted in the Lemma, this $I_G$ may have $a_1>2$.  This gives that restricting to this subgraph may remove all of our induced 5-cycles, pushing $a_1$ from 2 up to 3 or higher.  However, from Theorem \ref{maintheorem}, this guarantees $a_2\geq 6.$
We restrict then to the case that $G$ is a graph on 8 vertices with $\dhomol{2}(\Delta_G)\neq 0$.  We note that this $G^c$ still must have no induced 4-cycles.\\
\\
By Lemmas \ref{sixseven} and \ref{34vert}, we have that such a graph must have a 5-regular complement graph, $G^c$.  Choosing any vertex $v$, the neighbors of $v$, $n(v)$, must have an induced cycle of length greater than or equal to 4 in $G^c$ - and by assumption, as $G^c$ is $C_4$-free, $G^c$ restricted to $V'=\{v\}\cup n(v)$ must be a 5-wheel.
\begin{center}
\begin{tikzpicture}[scale=0.8, vertices/.style={circle, fill=black, inner sep=0pt}]

\node (D) at (0,1){$\left(\Delta_G\right)|_{V'}$};
\node[vertices, label=right:{$v$}] (w1) at (1.5,1.3) {};

\node[vertices] (x1) at (0,0) {};
\node[vertices] (x2) at (.8,-.5) {};
\node[vertices] (x3)  at (2.2,-.5) {};
\node[vertices, label=right:{$\}\leftarrow n(v)$}] (x4)  at (3,0){};
\node[vertices] (x5)  at (1.4,.3){};

\foreach \from/\to in {x1/x2,x2/x3,x3/x4,x1/w1,x2/w1,x3/w1,x4/w1}
	\draw[-] (\from)--(\to);
\foreach \from/\to in {x1/x5,x4/x5,x5/w1}
	\draw[dotted] (\from)--(\to);

\end{tikzpicture}
\end{center}
We consider the complex without the top vertex $v$.  There are two vertices $w_1$ and $w_2$ in $\Delta$ not contained in $n(v)$, each of degree 5.
\begin{center}
\begin{tikzpicture}[scale=0.8, vertices/.style={circle, fill=black, inner sep=0pt}]

\node[vertices] (x1) at (0,0.3) {};
\node[vertices] (x2) at (.8,-1) {};
\node[vertices] (x3)  at (2.2,-1) {};
\node[vertices] (x4)  at (3,0.3){};
\node[vertices] (x5)  at (1.5,1.5){};

\node[vertices, inner sep=.5pt, label=left:{$w_1$}] (w1)  at (1.4,-.3){};
\node[vertices, inner sep=.5pt, label=right:{$w_2$}] (w2)  at (1.7,0.5){};

\foreach \from/\to in {x1/x2,x2/x3,x3/x4,x4/x5,x1/x5}
	\draw[-] (\from)--(\to);

\end{tikzpicture}
\end{center}
We can split the graphs of this form into two cases - either $w_1$ and $w_2$ share an edge in $\Delta$ or not.\\
\\
\emph{\bf Case 1:}  \emph{Vertices $w_1$ and $w_2$ do not lie on an edge.}  As $G^c$ is a 5-regular graph, we have that $w_1$ and $w_2$ must be adjacent to 5 vertices.  As they are not adjacent to one another or to w, it must be the case that all three form a cone over the vertices in $n(v)$.
\begin{center}
\begin{tikzpicture}[scale=0.8, vertices/.style={circle, fill=black, inner sep=0pt}]

\node[vertices, label=right:{$v$}] (v) at (1.5,1.3) {};

\node[vertices] (x1) at (0,0) {};
\node[vertices] (x2) at (.8,-.5) {};
\node[vertices] (x3)  at (2.2,-.5) {};
\node[vertices, label=right:{$\}\leftarrow n(v)$}] (x4)  at (3,0){};
\node[vertices] (x5)  at (1.4,.3){};

\node[vertices] (w1) at (1.5,-.3) {};
\node (w1lab) at (1.5,-.8){$w_1$};
\node[vertices, label=left:{$w_2$}] (w2) at (1.5,-1.8) {};

\foreach \from/\to in {x1/x2,x2/x3,x3/x4,x1/v,x2/v,x3/v,x4/v}
	\draw[-] (\from)--(\to);
\foreach \from/\to in {x1/x5,x4/x5,x5/v,x5/w2}
	\draw[dotted] (\from)--(\to);

\foreach \from/\to in {x1/w2,x2/w2,x3/w2,x4/w2}
	\draw[-] (\from)--(\to);
\foreach \from/\to in {x5/w1,x1/w1,x2/w1,x3/w1,x4/w1}
	\draw[dashed] (\from)--(\to);

\end{tikzpicture}
\end{center}
This complex has numerous induced 4-cycles.  For example, $\{w_1,x_1,v,x_3\}$ will be one such.  So this violates our assumption that $G^c$ was 4-cycle free.\\
\\
\emph{\bf Case 2:}  \emph{Vertices $w_1$ and $w_2$ lie on an edge.}  Note that the other four vertices connected to $w_1$ and $w_2$ form two paths $P_1$ and $P_2$ of length 4 on the induced cycle of length 5 in $n(v)$.  The three non-isomorphic cases are:
\begin{enumerate}
\item $P_1=P_2$
\item $P_1\cap P_2$ is a path of length 3, i.e. the vertex in $n(v)$ not contained in $P_1$ and the vertex in $n(v)$ not contained in $P_2$ are adjacent, or
\item $P_1\cap P_2$ is disconnected, i.e. the vertex in $n(v)$ not contained in $P_1$ and the vertex in $n(v)$ not contained in $P_2$ are nonadjacent.
\end{enumerate}
\begin{center}
\begin{tikzpicture}[scale=0.8, vertices/.style={circle, fill=black, inner sep=0pt}]

\node[vertices] (x1) at (0,0.3) {};
\node[vertices] (x2) at (.8,-1) {};
\node[vertices] (x3)  at (2.2,-1) {};
\node[vertices] (x4)  at (3,0.3){};
\node[vertices, inner sep=1.2pt] (x5)  at (1.5,1.5){};

\node[vertices, inner sep=1pt, label=below:{$\;w_1$}] (w1)  at (1.5,-.3){};
\node[vertices, inner sep=1pt, label=above:{$w_2$}] (w2)  at (1.3,0.6){};

\foreach \from/\to in {x1/w1,x2/w1,x3/w1,x4/w1,x1/w2,x2/w2,x3/w2,x4/w2,w1/w2}
	\draw[-] (\from)--(\to);

\foreach \from/\to in {x1/x2,x2/x3,x3/x4,x4/x5,x1/x5}
	\draw[-] (\from)--(\to);

\end{tikzpicture}\;\;\;\;\;\;\;
\begin{tikzpicture}[scale=0.8, vertices/.style={circle, fill=black, inner sep=0pt}]

\node[vertices, inner sep=1.2pt] (x1) at (0,0.3) {};
\node[vertices, inner sep=1.2pt] (x2) at (.8,-1) {};
\node[vertices] (x3)  at (2.2,-1) {};
\node[vertices] (x4)  at (3,0.3){};
\node[vertices] (x5)  at (1.5,1.5){};

\node[vertices, inner sep=1pt, label=left:{$w_1$}] (w1)  at (1.2,-.2){};
\node[vertices, inner sep=1pt, label=above right:{$w_2$}] (w2)  at (1.7,0.3){};

\foreach \from/\to in {x1/x2,x2/x3,x3/x4,x4/x5,x1/x5,w1/w2}
	\draw[-] (\from)--(\to);

\foreach \from/\to in {x1/w2,x3/w2,x4/w2,x5/w2,x2/w1,x3/w1,x4/w1,x5/w1}
	\draw[-] (\from)--(\to);

\end{tikzpicture}\;\;\;\;\;\;\;
\begin{tikzpicture}[scale=0.8, vertices/.style={circle, fill=black, inner sep=0pt}]

\node[vertices] (x1) at (0,0.3) {};
\node[vertices, inner sep=1.2pt] (x2) at (.8,-1) {};
\node[vertices] (x3)  at (2.2,-1) {};
\node[vertices] (x4)  at (3,0.3){};
\node[vertices, inner sep=1.2pt] (x5)  at (1.5,1.5){};

\node[vertices, inner sep=1pt, label=left:{$w_1$}] (w1)  at (1.2,-.2){};
\node[vertices, inner sep=1pt, label=above right:{$w_2$}] (w2)  at (1.7,0.3){};

\foreach \from/\to in {x1/x2,x2/x3,x3/x4,x4/x5,x1/x5,w1/w2,w2/x1,w2/x5,w2/x4,w2/x3,w1/x1,w1/x4,w1/x3,w1/x2}
	\draw[-] (\from)--(\to);

\end{tikzpicture}
\end{center}
In cases 2.1 and 2.2, the vertices in $n(v)$ which are not adjacent to both $w_1$ and $w_2$ are of degree 4 (including the edge running to $v$), with no other possible edges which can be added without increasing the degree of some vertex to six.  As the 1-skeleton of $\Delta_G$ is a 5-regular graph, this eliminates these cases.\\
\\
We reproduce and label the graph of case 2.3 on the left.  Note that the left graph has the vertices labeled $a$ and $c$ of degree 4.  As $G^c$ must be a 5-regular graph, and the only edge possible edge that can be added is the edge from $a$ to $c$, we must have the 1-skeleton of $\Delta_G$ is the graph on the right with the new dashed edge.
\begin{center}
\begin{tikzpicture}[scale=0.8, vertices/.style={circle, fill=black, inner sep=0pt}]

\node[vertices, label=left:{$b$}] (x1) at (0,0.3) {};
\node[vertices, inner sep=1.2pt, label=below left:{$a$}] (x2) at (.8,-1) {};
\node[vertices, label=below right:{$e$}] (x3)  at (2.2,-1) {};
\node[vertices, label=right:{$d$}] (x4)  at (3,0.3){};
\node[vertices, inner sep=1.2pt, label=above:{$c$}] (x5)  at (1.5,1.5){};

\node[vertices, inner sep=1pt, label=left:{$w_1$}] (w1)  at (1.2,-.2){};
\node[vertices, inner sep=1pt, label=above right:{$w_2$}] (w2)  at (1.7,0.3){};

\foreach \from/\to in {x1/x2,x2/x3,x3/x4,x4/x5,x1/x5,w1/w2,w2/x1,w2/x5,w2/x4,w2/x3,w1/x1,w1/x4,w1/x3,w1/x2}
	\draw[-] (\from)--(\to);

\end{tikzpicture}
\;\;\;\;\;\;\;\;
\begin{tikzpicture}[scale=0.8, vertices/.style={circle, fill=black, inner sep=0pt}]

\node[vertices, label=left:{$b$}] (x1) at (0,0.3) {};
\node[vertices, inner sep=1.2pt, label=below left:{$a$}] (x2) at (.8,-1) {};
\node[vertices, label=below right:{$e$}] (x3)  at (2.2,-1) {};
\node[vertices, label=right:{$d$}] (x4)  at (3,0.3){};
\node[vertices, inner sep=1.2pt, label=above:{$c$}] (x5)  at (1.5,1.5){};

\node[vertices, inner sep=1pt, label=left:{$w_1$}] (w1)  at (1.2,-.2){};
\node[vertices, inner sep=1pt, label=above right:{$w_2$}] (w2)  at (1.7,0.3){};

\foreach \from/\to in {x1/x2,x2/x3,x3/x4,x4/x5,x1/x5,w1/w2,w2/x1,w2/x5,w2/x4,w2/x3,w1/x1,w1/x4,w1/x3,w1/x2}
	\draw[-] (\from)--(\to);
\draw[dashed] (x2)--(x5);

\end{tikzpicture}
\end{center}
This has an induced 4-cycle in the 1-skeleton, violating our assumption of $G^c$ being $C_4$-free.  Including the edges running from $n(v)$ to $v$, we have that the 1-skeleton of the Stanley-Reisner complex of $I_G$ as labeled above has vertex set $V(G^c)=\{v,w_1,w_2,a,b,c,d,e\}$ and edges
\begin{align*}
E(G^c)=&\{ab,ac,ae,av,aw_1,bc,bv,bw_1,bw_2,cd,cv,cw_2,\\
&de,dv,dw_1,dw_2,ev,ew_1,ew_2,w_1w_2\}.
\end{align*}
So the graph $G$ is the graph of the cycle on $8$ vertices, with edge set:
$$E(G)=\{aw_2,w_2v,vw_1,w_1c,ce,eb,bd,da\}.$$
The resolutions of the edge ideals of the cycle graph have been well-studied, and for general $C_n$,
$$\text{Ind}(C_n)=\left\lfloor \frac{n}{3}\right\rfloor.$$
So our graph $G$ above must have $\Ind{G}=2$.\\
\\
So no graph $G$ with a $C_4$-free complement and $\homol{2}(\widehat{G^c})\neq 0$ on 8 vertices exists, and all graphs with $a_1=2$ must have $a_2\geq 5$.
\end{proof}

This theorem shows that notable constraints exist on the type of sygyzies found in edge ideals of graphs with $C_4$-free complement.  No edge ideals exist with betti diagrams of the forms:
\begin{center}
\begin{tikzpicture} [betti/.style={
execute at begin cell=\node\bgroup,
execute at end cell=\egroup;,
execute at empty cell=\node{$\cdot$};
},vertices/.style={circle, fill=black, inner sep=0pt}] 
\matrix [betti] {
-&0&1&2&3&4&5&6&$\cdots$\\
total: &1&$\ast$&$\ast$&$\ast$&$\ast$&$\ast$&$\ast$&$\cdots$\\
0: &1&&&&&&&\\
1:&&$\ast$&$\ast$&$\circ$&$\circ$&$\circ$&$\circ$&$\cdots$\\
2:&&&&$\ast$&$\ast$&$\circ$&$\circ$&$\cdots$\\
3:&&&&&$\ast$&$\ast$&$\circ$&$\cdots$\\
};
\node [vertices] (1) at (-1,0){};
\node [vertices] (2) at (-1,-.6){};
\node [vertices] (3) at (-0.05,-.6){};
\node [vertices] (4) at (-0.05,-1.1){};
\node [vertices] (5) at (.45,-1.1){};
\node [vertices] (6) at (.45,-1.6){};
\node [vertices] (7) at (.7,-1.6){};

\foreach \from/\to in {1/2,2/3,3/4,4/5,5/6,6/7}
	\draw [-] (\from)--(\to);

\end{tikzpicture}\;\;\;\;\;
\begin{tikzpicture} [betti/.style={
execute at begin cell=\node\bgroup,
execute at end cell=\egroup;,
execute at empty cell=\node{$\cdot$};
},vertices/.style={circle, fill=black, inner sep=0pt}] 
\matrix [betti] {
-&0&1&2&3&4&5&6&$\cdots$\\
total: &1&$\ast$&$\ast$&$\ast$&$\ast$&$\ast$&$\ast$&$\cdots$\\
0: &1&&&&&&&\\
1:&&$\ast$&$\ast$&$\circ$&$\circ$&$\circ$&$\circ$&$\cdots$\\
2:&&&&$\ast$&$\ast$&$\circ$&$\circ$&$\cdots$\\
3:&&&&&&$\ast$&$\circ$&$\cdots$\\
};
\node [vertices] (1) at (-1,0){};
\node [vertices] (2) at (-1,-.6){};
\node [vertices] (3) at (-0.05,-.6){};
\node [vertices] (4) at (-0.05,-1.1){};
\node [vertices] (5) at (.9,-1.1){};
\node [vertices] (6) at (.9,-1.6){};
\node [vertices] (7) at (1.1,-1.6){};

\foreach \from/\to in {1/2,2/3,3/4,4/5,5/6,6/7}
	\draw [-] (\from)--(\to);
\end{tikzpicture}
\end{center}
In these Betti diagrams, the $\ast$ indicate necesssarily nonzero $\betti{i}{j}$ and the $\circ$ indicate possible $\betti{i}{j}$.  It remains open whether or not any edge ideals exist with a betti diagram with $\betti{2}{4}(I_G)=0$ as below and with nonzero Betti numbers between the two lines in the table below.  From \cite{Wh11}, it must be that if $\betti{2}{4}(I_G)=0$, then $\betti{3}{7}(I_G)=\betti{3}{8}(I_G)=\betti{3}{9}(I_G)=0$.
\begin{center}
\begin{tikzpicture} [betti/.style={
execute at begin cell=\node\bgroup,
execute at end cell=\egroup;,
execute at empty cell=\node{$\cdot$};
},vertices/.style={circle, fill=black, inner sep=0pt}] 
\matrix [betti] {
-&0&1&2&3&4&5&6&7&8&$\cdots$\\
total: &1&$\ast$&$\ast$&$\ast$&$\ast$&$\ast$&$\ast$&$\cdots$\\
0: &1&&&&&&&&&\\
1:&&$\ast$&$\ast$&$\circ$&$\circ$&$\circ$&$\circ$&$\circ$&$\circ$&$\cdots$\\
2:&&&$0$&$\ast$&$\ast$&$\ast$&$\ast$&$\ast$&$\ast$&$\circ$\\
3:&&&&&&&&&&$\ast$\\
};
\node [vertices] (1) at (-1.7,0){};
\node [vertices] (2) at (-1.7,-.6){};
\node [vertices] (3) at (-0.65,-.6){};
\node [vertices] (4) at (-0.65,-1.1){};
\node [vertices] (5) at (2.7,-1.1){};
\node [vertices] (6) at (2.7,-1.6){};
\node [vertices] (7) at (3.1,-1.6){};

\node [vertices] (5') at (1.3,-1.1){};
\node [vertices] (6') at (1.3,-1.6){};

\foreach \from/\to in {1/2,2/3,3/4,4/5,5/6,6/7}
	\draw [-] (\from)--(\to);

\foreach \from/\to in {5'/6',6'/6}
	\draw [dashed] (\from)--(\to);
\end{tikzpicture}
\end{center}

So we have a bound on the increase in degrees of the sygygies of $I_G$ in terms of minimal cycle length in $G^c$.  It is not however the case that for all graphs, these relative jump sequences (i.e. the lengths of the \emph{stairs} of the lower edge of the resolution) must be weakly increasing.  In Example \ref{nonjump} in Section \ref{cornerclasses} we construct such a counterexample.  We also provide a general algorithm for constructing large classes of Betti diagrams of edge ideals.  It should be noted that the number of vertices involved in the example are high - and no graphs of smaller size are currently known whose edge ideals exhibit this behavior.



\section{Corner Diagrams and Jump Sequences}\label{cornerclasses}

In this section, we describe a technique of producing jump sequences which are not of the form found in Theorem \ref{polytopes}.  Specifically, we construct a counterexample to all relative jump sequences being weakly increasing.  Throughout, we will refer to \emph{Betti diagrams of shape ${\bf a}=[k;a_1,...,a_{k-1}]$}, or ${\mathcal B}_{\bf a}$, the set of all betti diagrams of edge ideals with jump sequence {\bf a}.
\begin{definition}  The \emph{corner sum of two jump sequences} ${\bf a}=[k;a_1,a_2,...,a_{k-1}]$ and\\
${\bf b}=[l;b_1,b_1,...,b_{k-1},b_{j},...,b_{l-1}]$, where $k\leq l$, we define to be
$$[l;c_1,c_2,...,c_{k-1},b_{j},...,b_{l-1}],$$
with $c_i=\min\{a_i,b_i\}.$  We denote this corner sum ${\bf a}\oplus{\bf b}$.
\end{definition}
\begin{example}  The corner sum of two jump sequences can be thought of as the jump sequence obtained by superimposing the Betti diagrams of two edge ideals $I_G$ and $I_{G'}$ on top of one another.
\begin{center}
\begin{tikzpicture} [scale=.8,auto=left,betti/.style={
execute at begin cell=\node\bgroup,
execute at end cell=\egroup;,
execute at empty cell=\node{$\cdot$};
},
vertices/.style={circle, fill=black, inner sep=0pt}]

\node [vertices] (r1) at (-5-2.1,.9){};
\node [vertices] (r2) at (-3.3-2.1,.9){};
\node [vertices, inner sep=1.5pt] (r3) at (-3.3-2.1,.2){};
\node [vertices] (r4) at (-0.2-2.1,.2){};
\node [vertices, inner sep=1.5pt] (r5) at (-0.2-2.1,-.5){};
\node [vertices] (r6) at (.7,-.5){};
\node [vertices, inner sep=1.5pt] (r7) at (.7,-1.2){};
\node [vertices] (r8) at (8.5,-1.2){};

\node [vertices] (s1) at (-5-2.1,.9){};
\node [vertices] (s2) at (-3.3-.6-2.1,.9){};
\node [vertices, inner sep=1.5pt] (s3) at (-3.3-.6-2.1,.2){};
\node [vertices] (s4) at (-0.2+1.8-2.1,.2){};
\node [vertices, inner sep=1.5pt] (s5) at (-0.2+1.8-2.1,-.5){};
\node [vertices] (s6) at (4.9,-.5){};
\node [vertices, inner sep=1.5pt] (s7) at (4.9,-1.2){};
\node [vertices] (s8) at (8.5,-1.2){};
\draw [dashed] (4.9,-1.2-.05)--(8.5,-1.2-.05);

\foreach \from/\to in {r1/r2, r2/r3,r3/r4,r4/r5,r5/r6,r6/r7,r7/r8}
	\draw[-] (\from)--(\to);

\foreach \from/\to in {s1/s2, s2/s3,s3/s4,s4/s5,s5/s6,s6/s7,s7/s8}
	\draw[dashed] (\from)--(\to);

\matrix [betti,row 3 column 5/.style=red, row 4 column 11/.style=blue,row 4 column 12/.style=blue,row 4 column 13/.style=blue,row 5 column 17/.style=blue,row 5 column 18/.style=blue,row 5 column 19/.style=blue,row 5 column 20/.style=blue,row 5 column 21/.style=blue,row 5 column 22/.style=blue,row 5 column 16/.style=blue] {
0: &1&&\\
1:& &$\ast$&$\ast$&$\ast$&$\ast$&$\ast$&$\ast$&$\ast$\\
2:&\;&\;&\;&$\star$&$\ast$&$\ast$&$\ast$&$\ast$&$\ast$&$\ast$&$\ast$&$\ast$&$\ast$\\
3:&\;&\;&\;&\;&\;&\;&\;&\;&\;&$\circ$&$\circ$&$\circ$&$\ast$&$\ast$&$\ast$&$\ast$&$\ast$&$\ast$\\
4:&\;&\;&\;&\;&\;&\;&\;&\;&\;&\;&\;&\;&\;&\;&$\circ$&$\circ$&$\circ$&$\circ$&$\circ$&$\circ$&$\circ$&$\ast$&$\ast$&$\ast$&$\ast$&$\ast$&$\ast$\\
$\vdots$&\;&\;&\;&\;&\;&\;&\;&\;\\
};
\end{tikzpicture}
\end{center}
In this case, the Betti table of $I_G$ lies above the dashed line, and the Betti table of $I_G'$ lies above the solid line.  Betti numbers of solely $I_G$ are indicated by the $\star$ and Betti numbers of solely $I_{G'}$ are indicated by $\circ$. The jump sequence of $I_G$ is $[4;2,11,20]$ and the jump sequence of $I_{G'}$ is $[4;3,8,13]$, with the corner sum of these two jump sequences given by $[4;2,8,14]$.
\end{example}
We use this corner sum to describe possible jump sequences as follows:
\begin{proposition}\label{cornersumgraphs}  Given two Betti diagrams of edge ideals $I_G$, $I_H$, with jump sequences ${\bf a}$ and ${\bf b}$ respectively, we have a graph $K$ such that $I_K$ has jump sequence ${\bf a}\oplus{\bf b}$.
\end{proposition}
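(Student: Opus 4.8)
The plan is to realize the corner sum ${\bf a}\oplus{\bf b}$ by a single graph operation, the \emph{join} of $G$ and $H$, and then to extract the resulting jump sequence from Hochster's formula (Proposition~\ref{graphcliques}) together with the behaviour of reduced homology under disjoint unions. I would place $G$ and $H$ on disjoint vertex sets and assume without loss of generality that $\reg(I_G)=k+1\le l+1=\reg(I_H)$, so ${\bf a}=[k;a_1,\dots,a_{k-1}]$ and ${\bf b}=[l;b_1,\dots,b_{l-1}]$; adopting the convention $a_r:=\infty$ for $r\ge k$, the corner sum reads $({\bf a}\oplus{\bf b})_r=\min(a_r,b_r)$ for all $r\ge 1$.

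For the construction I would take $K=G*H$, the join of the two graphs: the vertex set is $V(G)\sqcup V(H)$ and the edge set is $E(G)\cup E(H)$ together with every pair $\{u,v\}$ with $u\in V(G)$, $v\in V(H)$ --- equivalently $K^c=G^c\sqcup H^c$. Since no clique of a disjoint union of graphs can meet both parts, the clique closure splits, $\widehat{K^c}=\widehat{G^c}\sqcup\widehat{H^c}$, which is again a flag complex; by Proposition~\ref{graphcliques} this is exactly $\Delta_K=\Delta(I_K)$, and it is the disjoint union of simplicial complexes $\Delta_G\sqcup\Delta_H$. This makes precise the idea of "superimposing two Betti diagrams": the two minimal free resolutions are realized side by side inside one edge ideal.

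Next comes the homology computation. Every $W\subseteq V(K)$ splits uniquely as $W=W_G\sqcup W_H$ with $\Delta_K|_W=\Delta_G|_{W_G}\sqcup\Delta_H|_{W_H}$, so for every $r\ge 1$ one has $\homol{r}(\Delta_K|_W)\cong\homol{r}(\Delta_G|_{W_G})\oplus\homol{r}(\Delta_H|_{W_H})$, with the usual conventions for restriction to the empty vertex set (the extra $\kk$-summand appearing in reduced degree $0$ when both parts are nonempty is irrelevant, since the jump sequence records only $r\ge 1$). Writing $\mu_r(\cdot)$ for the smallest cardinality of an induced subcomplex with nonvanishing $\homol{r}$ (and $\mu_r:=\infty$ if there is none), padding a minimal witness for $G$ with the empty subset of $V(H)$, and symmetrically for $H$, gives $\mu_r(K)\le\min(\mu_r(G),\mu_r(H))$, and the displayed splitting gives the reverse inequality, so $\mu_r(K)=\min(\mu_r(G),\mu_r(H))$ for all $r\ge 1$. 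By Proposition~\ref{graphcliques}, $\betti{i}{i+r+1}(I_K)\ne 0$ precisely when some $W$ with $|W|=i+r+1$ has $\homol{r}(\Delta_K|_W)\ne 0$, so the least such $i$ is $\mu_r(K)-r-1$ (attained by a witness of size exactly $\mu_r(K)$), and hence $a_r(K)=\mu_r(K)-r-2$; together with the analogous identities for $G$ and $H$ this yields $a_r(K)=\min(a_r(G),a_r(H))=({\bf a}\oplus{\bf b})_r$ for all $r\ge 1$. To fix the length of the sequence, note that $\reg(I_G)=k+1$ forces $\mu_r(G)=\infty$ for $r\ge k$ and $\reg(I_H)=l+1$ forces $\mu_r(H)=\infty$ for $r\ge l$, so $\mu_r(K)=\infty$ exactly for $r\ge l$; since $\Delta_K$ is disconnected (a two-vertex induced subcomplex has nonzero $\homol{0}$), this gives $\reg(I_K)=l+1=\max(\reg I_G,\reg I_H)$, so $I_K$ has a jump sequence of length $l-1$, namely ${\bf a}\oplus{\bf b}$.

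The construction is essentially forced and no step uses heavy machinery, so I expect the main obstacle to be the bookkeeping in the last two steps: carrying reduced homology of disjoint unions through the anomalous degree-$0$ summand and the void-subcomplex conventions, and checking that $\reg(I_K)$ is \emph{exactly} $\max(\reg I_G,\reg I_H)$ --- i.e.\ that the edges added by the join create no induced subcomplex carrying homology in a degree higher than those already present in $\Delta_G$ or $\Delta_H$ --- so that the realized jump sequence has precisely the prescribed length $l-1$ and does not spill over into a higher strand.
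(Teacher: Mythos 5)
Your construction is exactly the paper's: form $K$ by taking $G$ and $H$ on disjoint vertex sets and adding all cross edges, so that $K^c=G^c\sqcup H^c$ and $\Delta_K=\Delta_G\sqcup\Delta_H$, then read off the jump sequence from Hochster's formula. Where you diverge is in how the nonvanishing pattern is extracted. The paper first proves Lemma \ref{sumformulas}, an explicit closed-form expression for every $\betti{i}{i+s}(I_K)$ as a sum of products of binomial coefficients with Betti numbers of $I_G$ and $I_H$, and then deduces the proposition from the qualitative content of that formula. You bypass the formula entirely: since every induced subcomplex of $\Delta_K$ splits as $\Delta_G|_{W_G}\sqcup\Delta_H|_{W_H}$ and reduced homology in degrees $r\ge 1$ is additive over disjoint unions, the minimal witness size satisfies $\mu_r(K)=\min(\mu_r(G),\mu_r(H))$, which translates via $a_r=\mu_r-r-2$ into $a_r(K)=\min(a_r(G),a_r(H))$. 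This is correct, and it is arguably the more robust route for the proposition as stated: it isolates exactly the information needed (positions of first nonvanishing, for $r\ge 1$, where the anomalous $\homol{0}$ summand is irrelevant) and avoids the bookkeeping in the linear strand that the paper's lemma has to handle separately. What the paper's approach buys in exchange is the full Betti table of $I_K$, not just the shape of its lower edge. Your closing worry about the join creating higher homology is already answered by your own splitting observation, and your identification $\reg(I_K)=\max(\reg I_G,\reg I_H)$ with the $a_r=\infty$ padding convention reproduces the paper's corner-sum definition precisely.
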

Given graphs $G$ and $H$ on vertex sets $\{v_1,...,v_n\}$ and $\{w_1,...,w_m\}$, we form graph $K$ on vertex set $\{v_1,...,v_n,w_1,...,w_m\}$, with edge sets:
\begin{enumerate} 
\item $\{e: e\in E_G\}$
\item $\{e: e\in E_H\}$
\item $\{\{v_i,w_j\}:1\leq i\leq n, 1\leq j\leq m\}.$
\end{enumerate}
This graph has Stanley-Reisner complex $\Delta_K=\Delta_G\dotcup \Delta_H$, the disjoint union of the Stanley-Reisner complexes of $\Delta_G$ and $\Delta_H$.  We note that the Betti numbers of this complex can be computed via Hochster's formula in terms of sums of the Betti numbers of our original complexes as follows:\\
\\
\begin{lemma}\label{sumformulas} Let $G$, $H$, and $K$ be graphs as above.  Then for terms in the linear strand, we have
\begin{align*}
\betti{i}{i+1}(I_K)&=\betti{i}{i+1}(I_G)+\betti{i}{i+1}(I_H)\\
&+\sum_{j=1}^i\left(\binom{m}{i-j+1}\betti{j-1}{j}(I_G)+\binom{n}{j}\betti{i-j}{i-j+1}(I_H)\right)\\
&+\binom{m+n}{i+1}-\binom{m}{i+1}-\binom{n}{i+1}.
\end{align*}
For terms in the nonlinear strands, we have for $s\geq 2$,
\begin{align*}
\betti{i}{i+s}(I_K)&=\betti{i}{i+s}(I_G)+\betti{i}{i+s}(I_H)\\
&+\sum_{j=1}^{i+s-1}\left(\binom{m}{i-j+s}\betti{j-s}{j}(I_G)+\binom{n}{j}\betti{i-j}{i-j+s}(I_H)\right).\\
\end{align*}
\end{lemma}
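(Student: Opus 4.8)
The plan is to reduce everything to Hochster's formula (Theorem~\ref{hochsters}, in the edge-ideal form of Proposition~\ref{graphcliques}) applied to $\Delta_K=\Delta_G\dotcup\Delta_H$, and then to sort the resulting sum according to how a vertex set distributes between the two components. First I would record the relevant combinatorics of the decomposition: since $K$ contains every edge $\{v_i,w_j\}$, the complement $K^c$ has no edges between the $v$'s and the $w$'s, so $K^c=G^c\dotcup H^c$ as graphs and hence $\Delta_K=\widehat{K^c}=\widehat{G^c}\dotcup\widehat{H^c}=\Delta_G\dotcup\Delta_H$, as already noted before the lemma. For $W\subseteq V(K)$ I would set $W_G=W\cap\{v_1,\dots,v_n\}$ and $W_H=W\cap\{w_1,\dots,w_m\}$; because induced subcomplexes are computed componentwise, $(\Delta_K)|_W=(\Delta_G)|_{W_G}\dotcup(\Delta_H)|_{W_H}$.

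Next I would recall the one piece of topology that is needed, the reduced homology of a disjoint union. Since the simplicial chain complex of $A\dotcup B$ is the direct sum of those of $A$ and $B$, one gets $\dhomol{t}(A\dotcup B)=\dhomol{t}(A)+\dhomol{t}(B)$ for every $t\geq 1$; in degree $t=0$, passing from ordinary to reduced homology subtracts one from the component count, so when $A$ and $B$ are both nonempty we have $\dhomol{0}(A\dotcup B)=\dhomol{0}(A)+\dhomol{0}(B)+1$. When one of $W_G,W_H$ is empty, $(\Delta_K)|_W$ is simply an induced subcomplex of $\Delta_G$ or of $\Delta_H$ and no correction arises.

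Then I would write $\betti{i}{i+s}(I_K)=\sum_{|W|=i+s}\dhomol{s-1}\!\left((\Delta_K)|_W\right)$ and split the sum by $j:=|W_G|$. The term $j=i+s$ contributes $\betti{i}{i+s}(I_G)$ and the term $j=0$ contributes $\betti{i}{i+s}(I_H)$. For each $1\leq j\leq i+s-1$, summing $\dhomol{s-1}\!\left((\Delta_G)|_{W_G}\right)$ over the $\binom{m}{i+s-j}$ choices of $W_H$ of size $i+s-j$ and then over $W_G$ of size $j$ yields $\binom{m}{i-j+s}\,\betti{j-s}{j}(I_G)$ by the Hochster description of Betti numbers, and symmetrically the $H$-part yields $\binom{n}{j}\,\betti{i-j}{i-j+s}(I_H)$; terms in which $j-s<0$ or $i-j<0$ vanish automatically (both as Betti numbers and because the relevant induced complexes have too few vertices to support that homology), so no truncation of the index range is needed. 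For $s\geq 2$ this is exactly the asserted identity. For $s=1$ the extra $+1$ from degree-zero reduced homology contributes $\sum_{j=1}^{i}\binom{n}{j}\binom{m}{i+1-j}$, which by the Vandermonde identity equals $\binom{m+n}{i+1}-\binom{m}{i+1}-\binom{n}{i+1}$, completing the linear-strand formula.

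The calculations are routine reindexing of binomial coefficients together with one use of Vandermonde; the only point demanding care, and the source of the purely combinatorial correction term in the linear strand, is the degree-zero bookkeeping — keeping track of the extra $\kk$ that $\dhomol{0}$ of a disjoint union of two nonempty complexes carries, and ensuring that this extra summand is counted exactly once for each pair $(W_G,W_H)$ with both parts nonempty and never when either part is empty.
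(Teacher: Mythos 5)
Your proposal is correct and follows essentially the same route as the paper: apply Hochster's formula to $\Delta_K=\Delta_G\dotcup\Delta_H$, partition the vertex sets $W$ by how they split between the two components, use the disjoint-union formula for reduced homology (with the extra $+1$ in degree zero when both parts are nonempty), and finish the linear strand with Vandermonde's identity. Your write-up is in fact slightly more careful than the paper's on the degree-zero bookkeeping and on why no truncation of the index range is needed, but the argument is the same.
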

\begin{proof} [Proof of Lemma \ref{sumformulas}]  Using Hochster's formula, we rewrite $\betti{i}{i+s}(I_K)$ in terms of the dimensions of the homologies of sets of size $i+s$.  For terms in the linear strand (for which s=1) this becomes:
\begin{align*}  \betti{i}{i+1}(I_K)&=\sum_{|W|=i+1}\homol{0}\left(\Delta_K|_W\right)\\
&=\sum_{\substack{
|W|=i+1\\
W\subseteq V_1}
}\homol{0}\left(\Delta_G|_W\right)+\sum_{\substack{
|W|=i+1\\
W\subseteq V_2}
}\homol{0}\left(\Delta_H|_W\right)\\
&\;\;\;\;\;+\sum_{\substack{
|R|+|S|=i+1\\
R\subseteq V_1,\;\;|R|=j\\
S\subseteq V_2,\;\;|S|=i-j+1}
}\left[\homol{0}\left(\Delta_G|_R\right)+\homol{0}\left(\Delta_H|_S\right)+1\right]
\end{align*}

The extra 1 in the rightmost summand corrects the count for reduced homology of the two subsets.  The first two terms in the summand are the Betti numbers of the original ideals.  We rewrite the sum using this, with $R\subseteq V_1$ and $S\subseteq V_2$, then sum across all subsets with the appropriate counts:

\begin{align*}  \betti{i}{i+1}(I_K)&=\betti{i}{i+1}(I_G)+\betti{i}{i+1}(I_H)\\
&\;\;\;\;\;+\sum_{j=1}^i\sum_{|S|=i-j+1}\left(\sum_{|R|=j}\left[\homol{0}\left(\Delta_G|_R\right)+\homol{0}\left(\Delta_H|_S\right)+1\right]\right)\\
&=\betti{i}{i+1}(I_G)+\betti{i}{i+1}(I_H)\\
&\;\;\;\;\;+\sum_{j=1}^i\sum_{|S|=i-j+1}\left(\betti{j-1}{j}(I_G)+\binom{n}{j}\homol{0}\left(\Delta_H|_S\right)+\binom{n}{j}\right)\\
&=\betti{i}{i+1}(I_G)+\betti{i}{i+1}(I_H)\\
&\;\;\;\;\;+\sum_{j=1}^i\sum_{|S|=i-j+1}\left(\betti{j-1}{j}(I_G)+\binom{n}{j}\homol{0}\left(\Delta_H|_S\right)+\binom{n}{j}\right)\\
&=\betti{i}{i+1}(I_G)+\betti{i}{i+1}(I_H)\\
&\;\;\;\;\;+\sum_{j=1}^i\left(\binom{m}{i-j-1}\betti{j-1}{j}(I_G)+\binom{n}{j}\betti{i-j}{i-j+1}\right)\\
&\;\;\;\;\;+\sum_{j=1}^i\left(\binom{m}{i-j+1}\binom{n}{j}\right)\\
\end{align*}

The final Betti number count above uses the combinatorial identity

\begin{align*}
\sum_{j=1}^i\binom{m}{i-j+1}\binom{n}{j}&=\left[\sum_{j=0}^{i+1}\binom{m}{i-j+1}\binom{n}{j}\right]-\binom{m}{i+1}-\binom{n}{i+1}\\
&=\binom{m+n}{i+1}-\binom{m}{i+1}-\binom{n}{i+1}
\end{align*}
This finishes the proof for the calculation of Betti numbers in the linear strand, producing the formula above.\\
\\
The count for the Betti numbers $\betti{i}{i+s}$ in the nonlinear strands is identical, removing the binomial coefficient terms coming from the reduced homology zero correction.
\end{proof}

\begin{proof}[Proof of Proposition \ref{cornersumgraphs}]  Let $[k;a_1,...,a_{k-1}]$ and $[l;b_1,...,b_{l-1}]$ be the jump sequences of $I_G$ and $I_H$ respectively.  From Proposition \ref{sumformulas}, we can see that $\betti{i}{i+s}(I_K)$ will be nonzero precisely when one or the other of $\betti{k}{k+s}(I_G)$ or $\betti{k}{k+s}(I_K)$ is nonzero, for some $k\leq i$.\\
\\
In terms of the Betti numbers on the right edge of the Betti table, we see then that the minimal nonzero betti number in each row should be in position $c_i$, where
$$c_i=\min\{a_i,b_i\}.$$
This completes our proof, and we can see that the lower edge of the Betti table of $I_K$ is obtained by superimposing the lower edges of the Betti tables of $I_G$ and $I_H$.  This gives us an edge ideal with jump sequence $[l;c_1,c_2,...,c_{k-1},b_k,...,b_l]$ as described.
\end{proof}
As a result, the shapes of Betti diagrams of edge ideals can be imbued with a monoid structure.  In Example \ref{tori}, we use this to construct a relative jump sequence which is not weakly increasing.

\begin{proposition}\label{tori}  Let $4\leq n_1\leq n_2\leq\cdots\leq n_r$ be a set of integers, and form the graphs $G_1=C_{n_1}^c,G_2=C_{n_2}^c,...,G_r=C_{n_r}^c$ on vertex sets $V_i=\{v_{i,j}:1\leq j\leq n_i\}$ for $1\leq i\leq r$.  Then the graph on vertex set $V=\dotcup V_i$ of $G=\dotcup G_i$ has an edge ideal with $\reg(I_G)=2r+1$, with relative jump sequence
$${\bf r}=[2r;\overbrace{1,1,...,1}^{r-1},n_1-3,n_2-3,...,n_r-3].$$
\end{proposition}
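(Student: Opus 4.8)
The plan is to describe $\Delta_G$ completely, feed every induced subcomplex through Hochster's formula (Proposition~\ref{graphcliques}), and thereby reduce the computation of each jump to an elementary integer minimization. First I would identify the complex: since $G=G_1\dotcup\cdots\dotcup G_r$ is a disjoint union of graphs, its complement is the join $G^c=G_1^c\ast\cdots\ast G_r^c=C_{n_1}\ast\cdots\ast C_{n_r}$, and because the clique closure of a join of graphs is the join of the clique closures, $\Delta_G=\widehat{G^c}=\widehat{C_{n_1}}\ast\cdots\ast\widehat{C_{n_r}}=C_{n_1}\ast\cdots\ast C_{n_r}$ (each $\widehat{C_{n_i}}=C_{n_i}$ since $n_i\ge 4$ makes the cycle triangle-free, so $\Delta_G\simeq(S^1)^{\ast r}\simeq S^{2r-1}$). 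More importantly, restriction commutes with joins, so for any $W\subseteq V$, writing $W_i=W\cap V_i$ and $T=\{i:W_i\neq\emptyset\}$, one gets $\Delta_G|_W=\ast_{i\in T}\,(C_{n_i}|_{W_i})$.

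Next I would record the homology of the pieces and apply Künneth. An induced subcomplex $C_{n_i}|_{W_i}$ equals $C_{n_i}\simeq S^1$ when $W_i=V_i$, and is otherwise a disjoint union of paths along the cycle, hence homotopy-discrete; thus $\homol{\ast}(C_{n_i}|_{W_i})$ is supported in degrees $\{0,1\}$, with $\homol{1}\neq 0$ iff $W_i=V_i$ and $\homol{0}\neq 0$ iff $C_{n_i}|_{W_i}$ is disconnected, the cheapest disconnected choice being two nonadjacent vertices (which exist because $n_i\ge 4$). By the Künneth formula for joins over the field $\kk$, $\homol{\ell}(\Delta_G|_W)\cong\bigoplus_{\sum_{i\in T}d_i=\ell+1-|T|}\ \bigotimes_{i\in T}\homol{d_i}(C_{n_i}|_{W_i})$, which is nonzero iff one can choose $d_i\in\{0,1\}$ with $\sum_{i\in T}d_i=\ell+1-|T|$ and $\homol{d_i}(C_{n_i}|_{W_i})\neq 0$ for each $i\in T$. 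Setting $A=\{i:d_i=1\}$ (forcing $W_i=V_i$, cost $n_i$) and $B=\{i\in T:d_i=0\}$ (forcing $W_i$ disconnected, cost $\ge 2$), the condition becomes $2|A|+|B|=\ell+1$ with $A,B$ disjoint subsets of $[r]$, and the minimal size of such a $W$ is $\sum_{i\in A}n_i+2|B|$.

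Then I would run Hochster's formula and solve the minimization. Proposition~\ref{graphcliques} gives $\betti{i}{i+\ell+1}(I_G)=\sum_{|W|=i+\ell+1}\dhomol{\ell}(\Delta_G|_W)$, so $a_\ell=\min\{|W|:\homol{\ell}(\Delta_G|_W)\neq 0\}-\ell-2$ and $\reg(I_G)=2+\max\{\ell:\homol{\ell}(\Delta_G|_W)\neq 0\text{ for some }W\}$. The Künneth constraint forces $\ell\le 2|T|-1\le 2r-1$, with equality attained at $W=V$; hence $\reg(I_G)=2r+1$ and the jump sequence has length $2r-1$. For the $a_\ell$, writing $a:=|A|$ and taking $A$ to be the $a$ smallest indices (legitimate as the $n_i$ are weakly increasing), the objective is $\sum_{i=1}^{a}n_i+2(\ell+1-2a)=\sum_{i=1}^{a}(n_i-4)+2(\ell+1)$, which is nondecreasing in $a$ because $n_i-4\ge 0$; feasibility ($0\le a\le\lfloor(\ell+1)/2\rfloor$ and $a+|B|\le r$) forces $a\ge\max(0,\ell+1-r)$, and this range is nonempty precisely because $\ell\le 2r-1$, so the minimum is attained at $a=\max(0,\ell+1-r)$. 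This yields $a_\ell=\ell$ for $1\le\ell\le r-1$ and $a_\ell=\sum_{i=1}^{\ell-r+1}n_i-3(\ell-r+1)+(r-1)$ for $r\le\ell\le 2r-1$. Differencing then gives $r_1=a_1=1$, $r_\ell=a_\ell-a_{\ell-1}=1$ for $2\le\ell\le r-1$, $r_r=a_r-a_{r-1}=(n_1+r-4)-(r-1)=n_1-3$, and $r_{r-1+q}=a_{r-1+q}-a_{r-2+q}=n_q-3$ for $2\le q\le r$, i.e. ${\bf r}_G=[2r;\underbrace{1,\dots,1}_{r-1},\,n_1-3,\dots,n_r-3]$ (the case $r=1$ giving $[2;n_1-3]$, consistent with the anticycle example).

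The mathematical core here — the optimization — is immediate once set up, since the objective is monotone in $|A|$; the only places that demand care are (i) a clean justification of the identities $G^c = G_1^c\ast\cdots\ast G_r^c$, $\widehat{H_1\ast H_2}=\widehat{H_1}\ast\widehat{H_2}$, and $\Delta_G|_W=\ast_{i\in T}(C_{n_i}|_{W_i})$, together with the reduced Künneth formula for joins with $\kk$-coefficients (the augmented chain complex of a join is the shifted tensor product), and (ii) the bookkeeping verifying that all feasibility constraints in the minimization hold simultaneously — in particular that two nonadjacent vertices always exist in $C_{n_i}$ (uses $n_i\ge 4$) and that $\max(0,\ell+1-r)\le\lfloor(\ell+1)/2\rfloor$ (uses $\ell\le 2r-1$). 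I expect (ii)'s index-juggling, rather than any conceptual difficulty, to be the main obstacle.
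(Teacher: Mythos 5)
Your proposal is correct and rests on the same key fact as the paper's proof: $\Delta_G$ is the join $\Delta_{G_1}\ast\cdots\ast\Delta_{G_r}$ of the anticycle complexes, with the jump sequence then read off from the multiplicative (K\"unneth) structure of the join. The only difference is presentational: where the paper multiplies Betti diagrams and calls the rest ``an immediate linear algebra computation,'' you carry out that computation explicitly on the Hochster side as an integer minimization over the sets $A$ and $B$, which correctly recovers $\reg(I_G)=2r+1$ and ${\bf r}_G=[2r;1,\dots,1,n_1-3,\dots,n_r-3]$.
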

\begin{proof}  This follows from a quick note on the description of $\Delta_G$ in terms of the $\Delta_{G_i}$.  As in general, the Stanley-Reisner complex $\Delta_I$ of a monomial ideal $I=J+K$, where $J$ and $K$ are monomial ideals on disjoint sets of variables, satisfies
$$\Delta_I\cong \Delta_J\ast \Delta_K,$$
i.e. $\Delta_I$ is the join of the two subcomplexes $\Delta_J$ and $\Delta_K$.  In particular, we have
$$\Delta_G=\Delta_{G_1}\ast\Delta_{G_2}\ast\cdots\ast\Delta_{G_r}.$$
As a result, our Betti diagram $B$ of the edge ideal of $G$ can be written as the products as matrices of the Betti diagrams $B_i$ of the $G_i$, with $B=B_1B_2\cdots B_r$.  The regularity count and the jump sequence calculation follow from an immediate linear algebra computation.
\end{proof}
We use edge ideals of this form, in conjunction with Proposition \ref{cornersumgraphs}, to construct an example of an edge ideal whose jump sequence is not weakly increasing.\\
\begin{proposition}\label{nonjump}  There exists a graph $G$ with relative jump sequence $[k;r_1,...,r_{k-1}]$ such that $r_i\geq r_{i+1}$ for some $i$.
\end{proposition}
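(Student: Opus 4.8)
The plan is to combine Proposition~\ref{tori} with Proposition~\ref{cornersumgraphs}. A single graph produced by Proposition~\ref{tori} cannot itself serve as the example, since its relative jump sequence is $[2r;1,\dots,1,n_1-3,\dots,n_r-3]$, which is weakly increasing because $1\le n_1-3\le n_2-3\le\cdots\le n_r-3$. Instead I would corner-sum two such graphs. The idea is that the corner sum $\mathbf{a}\oplus\mathbf{b}$ is formed row by row from the \emph{smaller} of the two leading columns, so the lower edge of the combined Betti table can track one sequence up a steep step and then switch to the other sequence for the next, gentler step; if the two absolute jump sequences essentially coincide at the index where the first makes a large jump, the relative jump sequence of the corner sum must strictly decrease immediately afterward.

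\emph{Construction.} Take $r=3$ and set
\[
H_1=C_4^c\dotcup C_{13}^c\dotcup C_{13}^c,\qquad H_2=C_9^c\dotcup C_9^c\dotcup C_9^c .
\]
By Proposition~\ref{tori}, $I_{H_1}$ has relative jump sequence $[6;1,1,1,10,10]$ and $I_{H_2}$ has relative jump sequence $[6;1,1,6,6,6]$, so their absolute jump sequences are $(1,2,3,13,23)$ and $(1,2,8,14,20)$. Both have $k=6$, so $\mathbf{a}_{H_1}\oplus\mathbf{a}_{H_2}=[6;c_1,\dots,c_5]$ with $c_i=\min\{(\mathbf{a}_{H_1})_i,(\mathbf{a}_{H_2})_i\}$, which is the absolute sequence $(1,2,3,13,20)$, i.e.\ the relative jump sequence $[6;1,1,1,10,7]$. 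By Proposition~\ref{cornersumgraphs} there is a graph $K$ — the disjoint union of $H_1$ and $H_2$ with all edges between their vertex sets adjoined — whose edge ideal $I_K$ realizes this sequence. Taking $i=4$ we get $r_4=10>7=r_5$, so $I_K$ is the desired example.

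\emph{Where the difficulty lies.} There is no topological obstacle here; Propositions~\ref{tori} and~\ref{cornersumgraphs} do all the work, and the only real task is the parameter search: one must choose the two lists of cycle lengths so that the componentwise minimum follows $H_1$ through its steep step (at index $4$ above) but follows $H_2$ at the next index. Writing out the resulting constraints — the cycle lengths of $H_2$ must be large enough that the absolute sequence of $H_2$ stays above that of $H_1$ at the index of $H_1$'s steep step, yet small enough that it drops below at the following index — forces $r\ge 3$: a direct check shows that corner-summing any two Proposition~\ref{tori} graphs with $r\le 2$, or an $r=2$ graph with an $r=3$ graph, still produces a weakly increasing relative jump sequence. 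With $r=3$ the small choice above works, and its verification is just the arithmetic displayed. One should also confirm the hypotheses of Proposition~\ref{tori} ($4\le n_1\le n_2\le n_3$ for each of $H_1$ and $H_2$, which holds) and that $\reg(I_{H_1})=\reg(I_{H_2})=2\cdot 3+1=7$, so that the corner sum is taken between sequences of equal length $k=6$.
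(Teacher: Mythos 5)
Your proposal is correct and follows essentially the same route as the paper: the paper also corner-sums two Proposition~\ref{tori} graphs (it uses $C_5^c\dotcup C_5^c\dotcup C_5^c$ with absolute sequence $[6;1,2,4,6,8]$ and $C_4^c\dotcup C_6^c\dotcup C_6^c$ with $[6;1,2,3,6,9]$, yielding $[6;1,2,3,6,8]$ and relative sequence $[6;1,1,1,3,2]$). Your parameter choice is just a larger instance of the same construction, and your arithmetic checks out.
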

\begin{proof}  Let $G_1=C_5^c\;\dotcup\;(C_5^c)'\;\dotcup\;(C_5^c)''$ and $G_2=C_4^c\;\dotcup \;C_6^c\;\dotcup\;(C_6^c)'$ be two graphs, with $G_1$ the union of three 5-anticycle graphs and $G_2$ the union of one 4-anticycle and two 6-anticycles, all viewed as graphs on disjoint sets of vertices.  Using Proposition \ref{tori}, we have the relative jump sequences of $G_1$ and $G_2$ are respectively $r_1=[6;1,1,2,2,2]$ and $r_2=[6;1,1,1,3,3]$.  This gives us jump sequences
$$a_1=[6;1,2,4,6,8]\;\;\;\;\text{and}\;\;\;\;a_2=[6;1,2,3,6,9].$$
So using Proposition \ref{cornersumgraphs}, we have a graph $G$ which has jump sequence
$$a=a_1\oplus a_2=[6;1,2,3,6,8],$$
which gives us a relative jump sequence $r=[6;1,1,1,3,2].$
\end{proof}
\begin{remark}  Another interesting question to ask is what additional necessary conditions are required for $G$ to guarantee an increasing relative jump sequences.  Alternately, it would be of combinatorial interest to find classes of complexes where these relative jump sequences are not weakly increasing, but with $\Delta$ connected [excluding trivial cases like coning over a vertex to connect these two tori, etc.]
\end{remark}




\section{Classes of Graphs with $\Ind{G}=1$ and High Regularity}\label{highregclasses}
\begin{theorem}  Fix $n\geq 5$.  Let $H$ be the graph on vertex set $\{x_1,...,x_n,y_1,...,y_n,z_1,z_2\}$, with edges of the following forms:

\begin{enumerate}
\item $\{x_iz_1: 1\leq i\leq n\}$
\item $\{y_iz_2: 1\leq i\leq n\}$
\item $\{x_iy_i: 1\leq i\leq n\}$
\item $\{x_iy_{i+1}: 1\leq i\leq n-1\}$
\item $\{y_1x_n\}$
\end{enumerate}
Then $G=H^c$ has a Gorenstein edge ideal $I_G$ and a shellable Stanley-Reisner complex $\Delta_G=\widehat{H^c}$.  This ideal has jump sequence $[3; 2,2n-2]$.
\end{theorem}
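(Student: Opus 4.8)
The plan is to realize the Stanley--Reisner complex $\Delta_G=\widehat{H^c}$ explicitly as the boundary of a simplicial $3$-polytope and then read every assertion off that picture. First I would check, directly from the edge list of $H$, that the maximal faces of $\widehat{H^c}$ are exactly the triangles of a ``generalized icosahedron'' $P_n$ on the $2n+2$ vertices: $z_1,z_2$ are the two poles, $\{x_1,\dots,x_n\}$ and $\{y_1,\dots,y_n\}$ span two equatorial $n$-cycles, and the $1$-skeleton restricted to these $2n$ vertices is the belt of an $n$-gonal antiprism capped by a pyramid (apex $z_1$, resp.\ $z_2$) over each of its two $n$-gon faces; for $n=5$ this is exactly the icosahedron of Example \ref{balloon}. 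Since $P_n$ is a simplicial polytope, its boundary complex $\Delta_G=\partial P_n$ is shellable, and it is a triangulated $2$-sphere; as $H$ has no isolated vertex, $\Delta_G$ has no cone point, so it is a homology $2$-sphere without cone points and hence $\Bbbk[\Delta_G]$ is Gorenstein, i.e.\ $I_G$ is Gorenstein.

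I would then compute the jump sequence from Hochster's formula (Proposition \ref{graphcliques}), $\betti{i}{j}(I_G)=\sum_{|W|=j}\dhomol{j-i-1}(\Delta_G|_W)$. Because $\Delta_G$ is $2$-dimensional, $\widetilde{H}_\ell(\Delta_G|_W)=0$ for $\ell\ge 3$, so every nonzero $\betti{i}{j}$ has $j-i\le 3$; taking $W=V$ and $\widetilde{H}_2(\partial P_n)=\Bbbk$ gives $\betti{2n-1}{2n+2}(I_G)\ne 0$, so $\reg I_G=4$. For $a_1$: $\betti{2}{4}(I_G)$ equals the number of induced $4$-cycles in the $1$-skeleton of $\Delta_G$ (the $\Ind{G}=1$ criterion of the Corollary above), so it suffices to check, from the belt description and $n\ge 5$, that $P_n$ has no induced $4$-cycle, whence $\betti{2}{4}(I_G)=0$; meanwhile $\Delta_G$ contains an induced $5$-cycle $W$ (an equatorial pentagon when $n=5$, a short belt cycle in general) with $\dhomol{1}(\Delta_G|_W)=1$, so $\betti{3}{5}(I_G)\ne 0$ and $a_1=2$.

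The heart of the argument is $a_2=2n-2$: one must show $\betti{i}{i+3}(I_G)=\sum_{|W|=i+3}\dhomol{2}(\Delta_G|_W)=0$ for every $i<2n-1$, while the value at $W=V$ is $\dim\widetilde{H}_2(\partial P_n)=1$; equivalently, no proper induced subcomplex of $\partial P_n$ carries nonzero $\widetilde{H}_2$. The approach I would use mirrors Lemma \ref{wheels} and the Mayer--Vietoris step in the proof of Theorem \ref{maintheorem}: if $\Delta_G|_W$ with $W\subsetneq V$ had $\widetilde{H}_2\ne 0$, pick $v\in W$ and split $\Delta_G|_W$ as the union of $\Delta_G|_{W\setminus v}$ and the closed star of $v$; in a flag $2$-sphere with no induced $4$-cycle the link of $v$ is an induced cycle of length $\ge 5$, and the Mayer--Vietoris sequence forces $\widetilde{H}_1\bigl(\Delta_G|_{(W\setminus v)\cap n(v)}\bigr)\ne 0$ on a vertex set so small that, iterating, $W$ cannot have been proper. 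The delicate point --- and what I expect to be the main obstacle --- is precisely this rigidity: controlling the induced subcomplexes of the capped antiprism tightly enough to conclude that realizing $\widetilde{H}_2$ on fewer than $2n+2$ vertices would force a $3$- or $4$-cycle, which is what makes the bound $2n-2$ sharp rather than merely the inequality supplied by Theorem \ref{maintheorem}. Everything else is bookkeeping once the polytope picture is in place.
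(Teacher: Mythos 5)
Your approach is essentially the paper's own: realize $\Delta_G$ as the boundary of the $n$-gonal antiprism capped by two pyramids with apexes $z_1,z_2$ (a simplicial $3$-polytope), deduce shellability and Gorensteinness from the polytope/sphere structure, get $a_1=2$ from $C_4$-freeness together with an induced $5$-cycle, and get $a_2=2n-2$ from $\widetilde{H}_2$ being realized only on the full vertex set; the paper's proof is in fact terser than yours and simply asserts that last point. The step you flag as ``the main obstacle'' --- that no proper induced subcomplex of $\partial P_n$ carries $\widetilde{H}_2$ --- does not need the iterated Mayer--Vietoris you sketch (whose termination is unclear as stated). Since $\Delta_G$ is $2$-dimensional, $\widetilde{H}_2$ of any subcomplex equals its space of $2$-cycles and therefore injects into $Z_2(\Delta_G)$ under inclusion; and a nonzero $2$-cycle on a connected closed triangulated surface must be supported on \emph{every} facet, because each edge lies in exactly two triangles and so the coefficient propagates across the whole surface. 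An induced subcomplex on a proper vertex subset omits all facets in the star of a missing vertex, so its $\widetilde{H}_2$ vanishes; hence the minimal $W$ with $\widetilde{H}_2(\Delta_G|_W)\neq 0$ is $W=V$, $|V|=2n+2$, giving $a_2=2n-2$ exactly. Two caveats on your first step: the theorem's printed edge list omits the cycle edges $x_ix_{i+1}$, $x_nx_1$, $y_iy_{i+1}$, $y_ny_1$ (visible in the paper's figure), without which $\widehat{H}$ contains no triangles at all, so ``checking directly from the edge list'' as literally printed would fail even though your antiprism description is the intended complex; and the complex in question is $\Delta_G=\widehat{G^c}=\widehat{H}$, not $\widehat{H^c}$ --- the statement's notation is off and you copied it.
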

\begin{proof}  This is clear from an examination of the clique closure of $H$.
\begin{center}
\begin{tikzpicture}
[scale=.8,auto=left,vertices/.style={circle, fill=black, inner sep=1pt}]

\fill [black!05] (0,0)--(0,1)--(2.5,2.9)--(5,1)--(5,0)--(2.5,-1.9)--cycle;
\node [vertices, label=above:{$x_1$}] (x1) at (0,1){};
\node [vertices, label=above:{$x_2$}] (x2) at (1,1){};
\node [vertices, label=above:{$x_3\;\;$}] (x3) at (2,1){};
\node [vertices] (x4) at (3,1){};
\node [vertices, label=above:{$x_n$}] (xn) at (4,1){};
\node [vertices, label=above:{$x_1$}] (x1') at (5,1){};

\node [vertices,label=left:{$z_1$}] (z1) at (2.5,2.9){};

\node [vertices,label=below:{$y_1$}] (y1) at (0,0){};
\node [vertices,label=below:{$y_2$}] (y2) at (1,0){};
\node [vertices,label=below:{$y_3\;\;$}] (y3) at (2,0){};
\node [vertices] (y4) at (3,0){};
\node [vertices,label=below:{$y_n$}] (yn) at (4,0){};
\node [vertices,label=below:{$y_1$}] (y1') at (5,0){};

\node [vertices,label=left:{$z_2$}] (z2) at (2.5,-1.9){};

\foreach \from/\to in {z1/x1,z1/x2,z1/x3,z1/x4,z1/xn}
	\draw [black!55] (\from) -- (\to);

\foreach \from/\to in {z2/y1,z2/y2,z2/y3,z2/y4,z2/yn}
	\draw [black!55] (\from) -- (\to);
		
\foreach \from/\to in {x1/x2,x2/x3,x3/x4,xn/x1'}
	\draw [black!55] (\from) -- (\to);

\foreach \from/\to in {y1/y2,y2/y3,y3/y4,yn/y1'}
	\draw [black!55] (\from) -- (\to);

\foreach \from/\to in {x1/y1,x2/y2,x3/y3,x4/y4,xn/yn}
	\draw [black!55] (\from) -- (\to);
	
\foreach \from/\to in {x1/y2,x2/y3,x3/y4,xn/y1'}
	\draw [black!55] (\from) -- (\to);
	
\foreach \from/\to in {z1/x1',x1'/y1',y1'/z2}
	\draw [black!55,dashed] (\from) -- (\to);

\foreach \from/\to in {x4/xn,y4/yn}
	\draw [line width=1.2pt, dotted] (\from)--(\to);

\end{tikzpicture}
\end{center}
Unfolding the complex, we can see that it is homotopic to a 2-sphere, and by direct examination we note that it is both clique closed and 4-cycle free.  As these can be realized as convex triangulations of $S^2$, ideals of this form are Gorenstein.  The smallest induced cycles are of length 5 [for example, $\{x_1y_1,x_1x_2,x_2y_3,y_3z_2,y_1z_2\}$,] and there are 2n+2 total vertices in $G$, so we have the desired jump sequence.  In the case of $n=5$, we obtain Example \ref{balloon}, which was the icosahedron.
\end{proof}
All examples edge ideals with jump sequences $[3;a_1,a_2]$ considered so far have had $a_1=1$ or $a_1=2$.  We present a (non-Cohen-Macaulay) example of a complex with $a_1=3$ and $a_2=nm-4$ for any $n,m\geq 6$.
\begin{example}  Let $H$ be a graph on vertex set $\{x_{i,j}:1\leq i\leq n, 1\leq j\leq m\}$, with edges of the following forms:
\begin{enumerate}
\item $\{x_{i,j}x_{i,j+1}: 1\leq i\leq n, 1\leq j\leq m-1\}$
\item $\{x_{i,1}x_{i,n}, 1\leq i\leq n\}$
\item $\{x_{i,j}x_{i+1,j}: 1\leq i\leq n-1, 1\leq j\leq m\}$
\item $\{x_{1,j}x_{n,j}, 1\leq j\leq m\}$
\item $\{x_{i,j}x_{i+1,j+1}: 1\leq i\leq n, 1\leq j\leq m\}$
\item $\{x_{i,1}x_{i+1,m}: 1\leq i\leq n-1\}$
\item $\{x_{1,j}x_{n,j+1}: 1\leq j\leq m-1\}$
\item $\{x_{1,1}x_{n,n}\}$
\end{enumerate}
Then $G=H^c$ will have an edge ideal with jump sequence $[3;3,nm-4]$.\\
\\
This is clear from an examination of $H$, which forms the 1-skeleton of $\Delta_G$.
\begin{center}
\begin{tikzpicture}
[scale=.8,auto=left,vertices/.style={circle, fill=black, inner sep=1pt}]

\fill [black!05] (0,1)--(0,7)--(6,7)--(6,1)--cycle;
\node [vertices, label=left:{$x_{1,1}$}] (x1) at (0,1){};
\node [vertices, label=below:{$x_{2,1}$}] (x2) at (1,1){};
\node [vertices] (x3) at (2,1){};
\node [vertices] (x4) at (3,1){};
\node [vertices] (x5) at (4,1){};
\node [vertices] (xn) at (5,1){};
\node [vertices, label=below:{$x_{n,1}$}] (x1') at (6,1){};

\node [vertices, label=left:{$x_{1,2}$}] (y1) at (0,2){};
\node [vertices] (y2) at (1,2){};
\node [vertices] (y3) at (2,2){};
\node [vertices] (y4) at (3,2){};
\node [vertices] (y5) at (4,2){};
\node [vertices] (yn) at (5,2){};
\node [vertices] (y1') at (6,2){};

\node [vertices] (z1) at (0,3){};
\node [vertices] (z2) at (1,3){};
\node [vertices] (z3) at (2,3){};
\node [vertices] (z4) at (3,3){};
\node [vertices] (z5) at (4,3){};
\node [vertices] (zn) at (5,3){};
\node [vertices] (z1') at (6,3){};

\node [vertices] (w1) at (0,4){};
\node [vertices] (w2) at (1,4){};
\node [vertices] (w3) at (2,4){};
\node [vertices] (w4) at (3,4){};
\node [vertices] (w5) at (4,4){};
\node [vertices] (wn) at (5,4){};
\node [vertices] (w1') at (6,4){};

\node [vertices] (s1) at (0,5){};
\node [vertices] (s2) at (1,5){};
\node [vertices] (s3) at (2,5){};
\node [vertices] (s4) at (3,5){};
\node [vertices] (s5) at (4,5){};
\node [vertices] (sn) at (5,5){};
\node [vertices] (s1') at (6,5){};

\node [vertices] (t1) at (0,6){};
\node [vertices] (t2) at (1,6){};
\node [vertices] (t3) at (2,6){};
\node [vertices] (t4) at (3,6){};
\node [vertices] (t5) at (4,6){};
\node [vertices] (tn) at (5,6){};
\node [vertices] (t1') at (6,6){};

\node [vertices, label=left:{$x_{1,m}$}] (r1) at (0,7){};
\node [vertices] (r2) at (1,7){};
\node [vertices] (r3) at (2,7){};
\node [vertices] (r4) at (3,7){};
\node [vertices] (r5) at (4,7){};
\node [vertices] (rn) at (5,7){};
\node [vertices, label=right:{$x_{n,m}$}] (r1') at (6,7){};

\foreach \from/\to in {x1/x2,x2/x3,x3/x4,x4/x5,xn/x1'}
	\draw [black!55] (\from) -- (\to);

\foreach \from/\to in {y1/y2,y2/y3,y3/y4,y4/y5,yn/y1'}
	\draw [black!55] (\from) -- (\to);

\foreach \from/\to in {z1/z2,z2/z3,z3/z4,z4/z5,zn/z1'}
	\draw [black!55] (\from) -- (\to);

\foreach \from/\to in {w1/w2,w2/w3,w3/w4,w4/w5,wn/w1'}
	\draw [black!55] (\from) -- (\to);

\foreach \from/\to in {s1/s2,s2/s3,s3/s4,s4/s5,sn/s1'}
	\draw [black!55] (\from) -- (\to);

\foreach \from/\to in {t1/t2,t2/t3,t3/t4,t4/t5,t5/tn,tn/t1'}
	\draw [black!55] (\from) -- (\to);

\foreach \from/\to in {r1/r2,r2/r3,r3/r4,r4/r5,r5/rn,rn/r1'}
	\draw [dashed, black!55] (\from) -- (\to);

\foreach \from/\to in {x1/y1,x2/y2,x3/y3,x4/y4,x5/y5,xn/yn}
	\draw [black!55] (\from) -- (\to);
	
\foreach \from/\to in {z1/y1,z2/y2,z3/y3,z4/y4,z5/y5,zn/yn}
	\draw [black!55] (\from) -- (\to);

\foreach \from/\to in {z1/w1,z2/w2,z3/w3,z4/w4,z5/w5,zn/wn}
	\draw [black!55] (\from) -- (\to);

\foreach \from/\to in {s1/w1,s2/w2,s3/w3,s4/w4,s5/w5,sn/wn}
	\draw [black!55] (\from) -- (\to);

\foreach \from/\to in {s1/t1,s2/t2,s3/t3,s4/t4,s5/t5,sn/tn}
	\draw [black!55] (\from) -- (\to);

\foreach \from/\to in {r1/t1,r2/t2,r3/t3,r4/t4,r5/t5,rn/tn}
	\draw [black!55] (\from) -- (\to);

\foreach \from/\to in {x1/y2,x2/y3,x3/y4,x4/y5,xn/y1',z1/w2,z2/w3,z3/w4,z4/w5,zn/w1',w1/s2,w2/s3,w3/s4,w4/s5,wn/s1',s1/t2,s2/t3,s3/t4,s4/t5,sn/t1',y1/z2,y2/z3,y3/z4,y4/z5,y5/zn,yn/z1',t1/r2,t2/r3,t3/r4,t4/r5,t5/rn,tn/r1',x5/yn,y5/zn,z5/wn,w5/sn,s5/tn}
	\draw [black!55] (\from) -- (\to);
	
\foreach \from/\to in {x1'/y1',y1'/z1',z1'/w1',w1'/s1',s1'/t1',t1'/r1'}
	\draw [black!55,dashed] (\from) -- (\to);

\foreach \from/\to in {x5/xn,y5/yn,z5/zn,w5/wn,s5/sn,t5/tn}
	\draw [black!55] (\from)--(\to);

\end{tikzpicture}
\end{center}
The clique closure of this 1-skeleton is a torus on $nm$ vertices, with smallest induced cycles of length 6.  No induced proper subcomplex has nonzero second homology, so we must have jump sequence $[3;3,nm-4]$.
\end{example}
\begin{remark}  Each of these classes of graphs with $\Ind{G}=1$ and regularity 3,4, or 5 give rise to edge ideals with $\Ind{G'}=k$ and regularity $\reg(I_{G'})=2k+1$, $\reg(I_{G'})=3k+1$, and $\reg(I_{G'})=4k+1$, respectively.  Given a graph $G$ with regularity $r$, taking $G'$ to be $k$ disjoint copies of the graph on different sets of variables gives a Stanley-Reisner complex:
$$\Delta_{G'}=\Delta_G\ast \Delta_G\ast\cdots \Delta_G,$$
via combinatorial joins of the faces in $\Delta_G$.  Via the K\"unneth formula, we see we have nonzero homology in the desired degrees, giving us the desired regularity calculation.
\end{remark}
A more general way of constructing graphs with a $C_4$-free complement is desirable.  Given any triangulation of a 2-sphere $\Delta$, there is a way of retriangulating the sphere to produce a new complex $\sdd(\Delta)$ which is the Stanley-Reisner complex of an edge ideal $I_G$ with a $C_4$-free 1-skeleton.
\begin{definition}  Let $\Delta$ be a pure dimensional simplicial complex whose facets are all of dimension 2.  Then $\sdd(\Delta)$ is the simplicial complex obtained by replacing each facet with the following complex:
\begin{center}
\begin{tikzpicture}
[scale=.8,auto=left,vertices/.style={circle, fill=black, inner sep=1pt}]

\draw [fill=black!05] (0,0)--(2,3.5)--(4,0)--cycle;

\node (1) [vertices] at (0,0) {};
\node (2) [vertices] at (2,3.5) {};
\node (3) [vertices] at (4,0) {};

\foreach \from/\to in {1/2,2/3,3/1}
	\draw [black!55] (\from)--(\to);

\draw [fill=black!05] (6,0)--(8,3.5)--(10,0)--cycle;

\node (4) [vertices] at (6,0) {};
\node (45) [vertices] at (7,1.75){};

\node (5) [vertices] at (8,3.5) {};
\node (56) [vertices] at (9,1.75){};

\node (6) [vertices] at (10,0) {};
\node (46) [vertices] at (8,0){};

\node (a) [vertices] at (7.3,.8){};
\node (b) [vertices] at (8.7,.8){};
\node (c) [vertices] at (8,1.9){};

\foreach \from/\to in {a/b,b/c,a/c,4/a,b/6,c/5,a/45,a/46,c/56,c/45,b/46,b/56}
	\draw [black!55] (\from)--(\to);

\end{tikzpicture}
\end{center}
\end{definition}
\begin{proposition}  Let $\Delta$ be a triangulation of a 2-sphere.  Then the Stanley-Reisner ideal of $\sdd(\Delta_G)$ is generated in degree 2.  Viewing this ideal as an edge ideal of a graph G, we have that $G^c$ is $C_4$-free.
\end{proposition}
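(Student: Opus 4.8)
The plan is to prove the two assertions by reducing everything to an explicit combinatorial description of the $1$-skeleton of $\sdd(\Delta)$. I would first set up notation for the vertices of $\sdd(\Delta)$ in three families: the original vertices of $\Delta$ (``corners''), one midpoint $m_e$ for each edge $e$ of $\Delta$, and, for each facet $F=\{u,v,w\}$ of $\Delta$, three ``central'' vertices $c_{F,u},c_{F,v},c_{F,w}$ matched one-to-one with the corners of $F$. Reading off the gadget in the definition gives the full edge list: the three central vertices of a facet span a triangle; $c_{F,v}\sim v$; $c_{F,v}$ is adjacent to the two midpoints $m_{e}$ with $v\in e\subseteq F$; $m_e$ is adjacent to the two endpoints of $e$; and nothing else. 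Two consequences I would record immediately are that no two corners are adjacent and no two midpoints are adjacent. I would also list the $2$-faces of each gadget (the central triangle, two ``corner'' triangles at each vertex of $F$, and one ``side'' triangle for each edge of $F$).

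For the first assertion, I would note that ``$I_{\sdd(\Delta)}$ is generated in degree $2$'' is equivalent to $\sdd(\Delta)$ being a flag complex, i.e.\ $\sdd(\Delta)=\widehat{(\sdd(\Delta))_1}$; since $\sdd(\Delta)$ has dimension $2$, this amounts to checking that every triangle of edges bounds a $2$-face and that there is no set of four pairwise-adjacent vertices. Because corners (resp.\ midpoints) are pairwise non-adjacent, a clique $K$ contains at most one corner and at most one midpoint; and if $K$ contains two central vertices they lie in a common facet $F$, so $K\cap C$ lies in one central triangle and $|K\cap C|\le 3$. Splitting on $|K\cap C|\in\{0,1,2,3\}$ and using the adjacency rules forces $K$ in each case to be one of the listed gadget faces. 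This step is a short finite verification once the edge list is in hand.

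The bulk of the work is the second assertion, that the $1$-skeleton of $\sdd(\Delta)$ --- which, once degree-$2$ generation is known, is exactly the complement $G^c$ --- has no induced $4$-cycle. The reduction I would use is: if $p\,q\,r\,s$ were an induced $4$-cycle then $q,s$ would be two non-adjacent common neighbours of the non-adjacent pair $p,r$, so it suffices to show that for every pair of non-adjacent vertices $x,y$ the common neighbourhood $N(x)\cap N(y)$ is a clique. I would check this by going through the possible types of the pair, namely corner--corner, corner--midpoint, corner--central, midpoint--midpoint, midpoint--central, and central--central, computing $N(x)\cap N(y)$ directly from the adjacency rules in each case. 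In every case the intersection comes out empty, a single vertex, or a pair of the form $\{v,m_{\{v,w\}}\}$ or $\{c_{F,v'},c_{F,v''}\}$, each of which is an edge. I expect the midpoint--central case (where the intersection can be the two central vertices of $F$ opposite a midpoint sitting on the far edge of $F$) and the central--central case (where one must distinguish whether the two facets share an edge of $\Delta$) to be the fiddly ones, so the real effort is bookkeeping the subcases rather than any conceptual difficulty. Finally I would conclude $C_4$-freeness of $G^c$ from this clique property, observing along the way that the argument uses only that every facet of $\Delta$ is a triangle, so the $2$-sphere hypothesis is not actually needed for either conclusion.
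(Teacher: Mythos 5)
Your proposal is correct, and it is organized differently from --- and more tightly than --- the paper's own argument. The paper checks for induced $4$-cycles by geometric locus: inside a single subdivided facet, along the subdivision of a shared edge of two facets, and around the link of a vertex, invoking the $2$-sphere hypothesis for the facts that two facets share at most one edge and that every vertex lies in at least three facets; the degree-$2$ generation is disposed of by noting that no boundary of a $3$-simplex occurs. You instead classify the vertices of $\sdd(\Delta)$ into corners, midpoints, and central vertices, verify flagness by showing every clique of the $1$-skeleton is a gadget face, and establish $C_4$-freeness via the criterion that the common neighbourhood of every non-adjacent pair is a clique, checked over the six type-pairs. What your route buys is completeness and generality: the common-neighbourhood criterion manifestly catches every induced $4$-cycle, whereas the paper's three local checks leave it to the reader to see why a $4$-cycle spread across gadgets must fall into one of them; and your analysis makes visible that the only global inputs are simplicial-complex facts (two distinct triangles share at most one edge; two edges at a vertex lie in at most one common facet), so the $2$-sphere hypothesis is indeed not needed beyond ``pure $2$-dimensional simplicial complex'' --- in particular the paper's appeal to every vertex lying in at least three facets is superfluous. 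The case analysis you flag as fiddly (midpoint versus opposite central, and the two-facet central--central case) does come out as claimed: the common neighbourhoods there are the edge $\{c_{F,u},c_{F,v}\}$ and, respectively, either a single midpoint or a pair $\{v,m_{e}\}$ with $v\in e$, all of which are cliques.
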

\begin{proof}  As there are no induced 4-cycles inside an individual face $\sdd(\sigma)$, we may consider how facets $\sigma$, $\sigma'$ intersect after this subdivision.  As we assumed that $\Delta$ was a triangulation of a sphere, any two facets share at most one edge.  Along this edge, the only possible induced cycle is of length 6.  As every vertex must be in at least 3 facets $\sigma$, $\sigma'$, and $\sigma''$, we also note that every cycle obtained as the link of a vertex $v$ must be of length at least 6.  Performing all of these checks locally in the triangulation of $\Delta$, we see that the 1-skeleton of $\sdd(\Delta)$ must be $C_4$-free.\\
\\
We have that $\sdd(\Delta)$ is generated in degree 2 by noting that no boundaries of a 3-simplex can occur, so $\sdd(\Delta)$ must be clique closed, and hence, has a degree 2 generated Stanley-Reisner ideal.
\end{proof}
This provides a way of constructing an infinite family of $C-4$ free edge ideals from a large family of simplicial complexes.  A similar retriangulation exists for triangulations of the 3-sphere, to appear in \cite{Wh11}.  These infinite families provide large classes of graphs with $\Ind{G}=1$ and regularity 4 and 5 respectively.

\section{Conclusions and Future Work}
A better understanding of the possible shapes of Betti diagrams of edge ideals is desirable.  While a complete classification of the lower edges of Betti diagrams of edge ideals (or general monomial ideals) seems somewhat unrealistic, questions about their behavior have general applications to simplicial topology.  For example, sharp conditions for even a jump sequence of length 2 to exist translate into key information on necessary structure of triangulations of spheres -- an area of general combinatorial interests.  More nuanced questions about the Betti numbers and Stanley-Reisner complexes $\Delta_G$ of edge ideals include:
\begin{question}  Some sample open problems:
\begin{enumerate}
\item Can sharp conditions be given on possible jump sequences $[k;a_1,a_2,...,a_{k-1}]$?  Can sharp conditions even be given on jump sequences $[k;a_1,a_2]$, i.e. for graphs with $\reg(G)=4$?
\item Are the Betti diagrams of $I_G$ strand connected, i.e. if $\betti{i}{j}(I_G)$ and $\betti{i+k}{j+k}(I_G)$ are both nonzero, are $\betti{i+k'}{j+k'}(I_G)\neq 0$ for all $0\leq k'\leq k$? [This is known for the linear strand, but not even for the first nonlinear strand.]
\item Do there exist graphs with regularity higher than $\Ind{G}$ but lower than the co-chordal clutter size of $G$?
\item Do there exist graphs with $\Ind{G}=1$ and $\reg(I_G)=6$?
\item Do there exist graphs with $\Ind{G}=k$ and $\reg(I_G)\geq 4k+1$?
\end{enumerate}
\end{question}

\bibliographystyle{alpha}
\bibliography{JumpSequencePaper11-16-10}

\end{document}